\newtheorem{thm}{Theorem}
\newtheorem{lem}{Lemma}
\newtheorem{prop}{Proposition}
\newcommand*{\ensembledenombres}{\mathbb}
\newcommand*{\R}{\ensembledenombres{R}}
\newcommand*{\C}{\ensembledenombres{C}}
\newcommand*{\N}{\ensembledenombres{N}}
\newcommand*{\esp}{\mathbb{E}}
\DeclareMathOperator{\Var}{Var}
\DeclareMathOperator{\Tr}{Tr}
\newcommand*{\CV}[1]{\underset{#1 \to +\infty}{\longrightarrow}}
\renewcommand{\Im}{\mathcal{I}}
\begin{document}

\title[Fluctuations of LSS of deformed Wigner matrices]{Fluctuations of linear spectral statistics of deformed Wigner matrices}

\author{Sandrine Dallaporta}
\address{Centre de mathématiques et de leurs applications, CNRS, ENS Paris-Saclay, Université Paris-Saclay, 94235, Cachan cedex, France.}
\address{Laboratoire de Mathématiques et Applications, UMR-CNRS 7348, Université de Poitiers, Téléport 2-BP30179, Boulevard Marie et Pierre Curie, 86962 Chasseneuil, France.}
\email{sandrine.dallaporta@math.univ-poitiers.fr}

\author{Maxime Fevrier}
\address{Universit\'e Paris-Saclay, CNRS, Laboratoire de mathématiques d’Orsay, 91405, Orsay, France.}
\email{maxime.fevrier@universite-paris-saclay.fr}

\thanks{}

\begin{abstract}
We investigate the fluctuations of linear spectral statistics of a Wigner matrix $W_N$ deformed by a deterministic diagonal perturbation $D_N$, around a deterministic equivalent which can be expressed in terms of the free convolution between a semicircular distribution and the empirical spectral measure of $D_N$. We obtain Gaussian fluctuations for test functions in $\mathcal{C}_c^7(\R)$ ($\mathcal{C}_c^2(\R)$ for fluctuations around the mean). Furthermore, we provide as a tool a general method inspired from Shcherbina and Johansson to extend the convergence of the bias if there is a bound on the bias of the trace of the resolvent of a random matrix. Finally, we state and prove an asymptotic infinitesimal freeness result for independent GUE matrices together with a family of deterministic matrices, generalizing the main result from \cite{Shlyakhtenko18}.
\end{abstract}

\maketitle

\section{Introduction}

The celebrated Wigner's Theorem \cite{Wigner58} states that the empirical spectral measure 
(i.e. the uniform distribution on the eigenvalues) $\mu_{W_N}$  of a suitably rescaled 
Hermitian matrix $W_N$ with independent entries having mean $0$ and variance $\sigma^2$, 
now known as a Wigner matrix, weakly converges in probability to the semicircular distribution $\mu_{\sigma^2}$ with density $(2\pi\sigma^2)^{-1}\sqrt{4\sigma^2-x^2}\mathbf{1}_{[-2\sigma;2\sigma]}(x)$.
It is remarkable that the limit distribution is non random and universal, in the sense that it depends 
on the distribution of the entries only through their variance $\sigma^2$. 
Hence, for every test function $\varphi:\mathbb{R} \to \mathbb{C}$ in a set 
including bounded continuous functions and indicator functions of intervals, 
the linear spectral statistic $\int_{\mathbb{R}}\varphi(x)\mu_{W_N}(dx)$
converges in probability to $\int_{\mathbb{R}}\varphi(x)\mu_{\sigma^2}(dx)$.

It is then natural to investigate fluctuations of $\int_{\mathbb{R}}\varphi(x)\mu_{W_N}(dx)$ around its limit 
$\int_{\mathbb{R}}\varphi(x)\mu_{\sigma^2}(dx)$. This question has attracted a lot of attention in the past decades. 
For Wigner matrices with Gaussian entries, results were obtained by a careful analysis of the explicit density 
of the eigenvalue point process \cite{Johansson98}, exploiting its determinantal structure \cite{CosLeb95},
or by a dynamical approach using stochastic calculus \cite{Cabanal01}. For more general entries, 
the question was first attacked in \cite{KKP96} for particular test functions 
$\varphi_z:x\mapsto (z-x)^{-1},\, z\in \C\setminus \R$, for which the linear spectral statistic 
is the Stieltjes transform of $\mu_{W_N}$. Recall here the definition of the Stieltjes transform 
$G_{\mu}$ of a Borel probability measure $\mu$ on $\mathbb{R}$:
$$G_{\mu}(z)=\int_{\mathbb{R}}\varphi_z(x)\mu(dx), \,  z\in\mathbb{C}\setminus\mathbb{R}.$$
See also \cite{BaiYao05,BaoXie16,BGMal16} for similar investigations.
Other approaches, for polynomial test functions using combinatorial arguments in \cite{SiSo98b} 
or for test functions with sufficiently fast decaying Fourier transform using Fourier analysis 
in \cite{LytPas09a,LytPas09b}, have been developed.

It appeared that these fluctuations depend on more moments (up to the fourth) of the entries 
of the Wigner matrix and on the regularity of the test function. 
When entries have the same finite fourth moment and the test function has enough regularity, 
fluctuations are of scale $N^{-1}$ and of Gaussian nature. More precisely, 
the following result is a reformulation of the main result in \cite{BaoXie16}:

\begin{thm}\label{BaoXieThm}
Let $W_N$ be a Wigner matrix satisfying the assumptions \ref{hyp:indep}, \ref{hyp:offdiagonal}, \ref{hyp:diagonal} in Section \ref{Model} below.
Then, for analytic $\varphi : \mathbb{R} \to \mathbb{R}$, the linear spectral statistic
$\int_{\mathbb{R}}\varphi(x)\mu_{W_N}(dx)$ of $W_N$ satisfies the following:
\[ N\Big(\int_{\mathbb{R}}\varphi(x)\mu_{W_N}(dx)-\int_{\mathbb{R}}\varphi(x)\mu_{\sigma^2}(dx)\Big) \Rightarrow \mathcal{N}\big(b_0(\varphi),V_0(\varphi)=C_0(\varphi,\varphi)\big),\]
where $b_0$ and $C_0$ are uniquely determined by 
$$b_0(\varphi_z)=-G_{\mu_{\sigma^2}}'(z)G_{\mu_{\sigma^2}}(z)\big(s^2-\sigma^2+\tau^2\frac{G_{\mu_{\sigma^2}}(z)^2}{1-\tau G_{\mu_{\sigma^2}}(z)^2}+\kappa G_{\mu_{\sigma^2}}(z)^2\big),\quad z\in \C\setminus \R;$$
\begin{align*}
C_0(\varphi_{z_1},\varphi_{z_2}) & =G_{\mu_{\sigma^2}}'(z_1)G_{\mu_{\sigma^2}}'(z_2)\Big[s^2-\sigma^2-\tau+2\kappa G_{\mu_{\sigma^2}}(z_1)G_{\mu_{\sigma^2}}(z_2)\\
& + \frac{\sigma^2}{(1-\sigma^2G_{\mu_{\sigma^2}}(z_1)G_{\mu_{\sigma^2}}(z_2))^2}+\frac{\tau}{(1-\tau G_{\mu_{\sigma^2}}(z_1)G_{\mu_{\sigma^2}}(z_2))^2}\Big],\quad z_1,z_2\in \C\setminus \R.
\end{align*}
\end{thm}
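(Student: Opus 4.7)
The plan is to reduce the theorem to the case of Stieltjes transform test functions $\varphi_z$, $z \in \C \setminus \R$, and then transfer the result to analytic $\varphi$ by means of a contour integral representation. This is a standard strategy for CLTs of linear spectral statistics, and it matches the fact that both $b_0$ and $C_0$ are characterized in the statement only on $\varphi_z$.

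First I would study the normalized trace of the resolvent
\[
G_N(z)=\frac{1}{N}\Tr(z-W_N)^{-1}, \qquad z \in \C \setminus \R.
\]
Writing the resolvent identity $(z-W_N)^{-1}=z^{-1}+z^{-1}W_N(z-W_N)^{-1}$ and applying a cumulant/Stein-type expansion with respect to the independent entries of $W_N$, one derives, for $\esp[G_N(z)]$, a perturbed self-consistent equation whose leading part is the fixed-point equation satisfied by $G_{\mu_{\sigma^2}}$. Collecting the $1/N$ corrections carefully, the second cumulants of the entries produce the $\sigma^2$- and $\tau$-dependent terms, the diagonal variance produces the $s^2-\sigma^2$ term, and the fourth cumulants produce the $\kappa$ term, yielding the formula for $b_0(\varphi_z)$. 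A parallel computation applied to the centered quantity $N(G_N(z_1)-\esp[G_N(z_1)])\cdot N(\overline{G_N(z_2)-\esp[G_N(z_2)]})$ via the same cumulant expansion (or via a martingale difference decomposition along the filtration generated by successive rows of $W_N$) produces the formula for $C_0(\varphi_{z_1},\varphi_{z_2})$, the two rational kernels $\sigma^2(1-\sigma^2 G G)^{-2}$ and $\tau(1-\tau GG)^{-2}$ coming from the summation of geometric series of resolvent entries involving respectively $W_{ij}W_{ji}$ and $W_{ij}^2$.

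Gaussianity would then be obtained at the level of the finite-dimensional marginals $(N(G_N(z_1)-\esp G_N(z_1)),\dots,N(G_N(z_k)-\esp G_N(z_k)))$. I would favour the martingale CLT for the increments along a row-filtration (Bai–Yao–Zheng style), checking the conditional Lindeberg condition and the convergence of the conditional quadratic variation towards the kernel $C_0$ identified above; a bound of the form $\Var(N G_N(z))=O(1)$ uniformly on compact sets of $\C\setminus\R$ (together with local law estimates for entries of the resolvent) is what drives both tightness and the identification of the bracket. Combining with the bias expansion gives the CLT for the vector of Stieltjes transforms.

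Finally, for analytic $\varphi:\R\to\R$, I would represent
\[
\int_{\R}\varphi(x)\mu_{W_N}(dx)-\int_{\R}\varphi(x)\mu_{\sigma^2}(dx)
=-\frac{1}{2\pi i}\oint_\Gamma \varphi(z)\bigl(G_{\mu_{W_N}}(z)-G_{\mu_{\sigma^2}}(z)\bigr)\,dz,
\]
for a contour $\Gamma$ enclosing the spectrum of $W_N$ with probability tending to one (using a tail bound on $\|W_N\|$). Multiplying by $N$ and passing to the limit under the integral sign, the pointwise joint CLT for $N(G_N(z)-G_{\mu_{\sigma^2}}(z))$ upgrades to a CLT for the linear spectral statistic, and the bias and variance functionals are obtained from $b_0$, $C_0$ by applying the same contour integral twice. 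The main obstacle in the whole scheme is the \emph{quantitative} cumulant expansion: one must track all $O(1/N)$ contributions to $\esp[G_N(z)]$ and to $\Cov(NG_N(z_1),NG_N(z_2))$ and show that higher-order cumulants, higher-order remainders and boundary effects from the diagonal vs. off-diagonal split all vanish uniformly on compact sets of $\C\setminus\R$, since any slippage there would alter $b_0$ or $C_0$. The analyticity assumption on $\varphi$ is what lets one use a fixed contour at positive distance from $[-2\sigma,2\sigma]$ and thus keep all resolvent bounds uniform.
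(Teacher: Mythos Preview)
This theorem is not proved in the paper: it is quoted in the Introduction as a reformulation of the main result of \cite{BaoXie16}, serving as background and as the special case $D_N=0$, $\nu_\infty=\delta_0$ of the paper's own Theorem~\ref{main}. So strictly speaking there is no proof here to compare against. That said, the paper's proof of Theorem~\ref{main} does specialize to this situation (as remarked after Propositions~\ref{convb} and~\ref{hook}), and your outline is essentially the classical Bai--Yao / Bao--Xie strategy, which is correct.

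Two points of comparison are worth making. For the Stieltjes transform statistics, the paper works entirely via Schur complements and the martingale CLT (Theorem~\ref{thm_CLT_martingale}) along the row-filtration $(\mathcal{F}_k)_k$, rather than a cumulant/Stein expansion; the bias comes from Taylor-expanding $R_N(z)_{kk}$ around $\tilde R(z)_{kk}$ (Lemma~\ref{taylor}). You mention both a cumulant expansion and a martingale decomposition; either works and leads to the same $b_0$, $C_0$, but in practice one commits to one scheme.

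The genuine methodological difference is in the extension step. You pass from $\varphi_z$ to analytic $\varphi$ by a Cauchy contour integral enclosing the spectrum, relying on a tail bound for $\|W_N\|$ and on tightness of $z\mapsto N(G_N(z)-G_{\mu_{\sigma^2}}(z))$ as a random analytic function on the contour. This is the original Bai--Yao route and is well suited to the stated analytic hypothesis. The present paper instead uses the Shcherbina/Johansson density argument in $\mathcal{H}_s$ (Section~\ref{extension} and Subsection~\ref{sec_bias_general}), replacing the contour and the spectral-confinement step by a uniform bound $V_N[\varphi]\le C\|\varphi\|_{\mathcal{H}_s}^2$; this buys a weaker regularity requirement ($\mathcal{C}^2_c$ for the fluctuations, $\mathcal{C}^7_c$ for the bias) at the cost of a different analytic machinery. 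For the statement as written, your contour approach is appropriate; just note that ``passing to the limit under the integral'' requires tightness of the resolvent process on the contour, which is an extra step beyond pointwise joint convergence of the marginals and should be stated explicitly.
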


For discontinuous test functions such as indicator functions of intervals, fluctuations 
are still Gaussian but with different scale, mean and variance \cite{CosLeb95,DalVu11,LanSos}. 
Note that the optimal regularity assumption on test functions for Theorem \ref{BaoXieThm} to hold is still an active field of research \cite{BaiWangZhou09,Shcherbina11,SosWon13,Kopel}. 
Gaussian fluctuations with different scale, mean and variance also hold for linear spectral statistics of the form
$\int_{\mathbb{R}}\varphi_z(x)\mu_{W_N}(dx)$ 
when the entries of the Wigner matrix have an infinite fourth moment 
(\cite{BGMal16}; see also \cite{BGGuiMal14} for the case of non square integrable entries, 
in which case Wigner's Theorem fails to hold \cite{BAGui08}). 
When entries of the Wigner matrix are not identically distributed in such a way that 
their variances differ (these matrices are called band matrices or sometimes Wigner matrices with variance profile), 
fluctuations of linear spectral statistics have also been described (see \cite{AdhJanSah} and references therein). 

In this paper, we focus on deformed Wigner matrices, i.e. sums $X_N$ of a Wigner matrix $W_N$ and 
a deterministic Hermitian matrix $D_N$ whose empirical spectral measure $\nu_N$ 
weakly converges to a Borel probability measure $\nu_{\infty}$. 
They were introduced by \cite{PorRoz60} as a generalization of Wigner's model for energy levels of nuclei, 
and had several other applications afterwards. The weak convergence of 
the empirical spectral measure $\mu_N$ of $X_N$ was first proved by Pastur in \cite{Pastur72} (see also \cite{PasShchbook}): 
$\mu_N$ weakly converges (in probability) towards the unique Borel probability measure $\rho $ on $\mathbb{R}$
satisfying the so-called {\em Pastur equation}:
\begin{equation}\label{Pastur}
G_{\rho}(z)=G_{\nu_{\infty}}(z-\sigma^2G_{\rho}(z)), \,  z\in\mathbb{C}\setminus\mathbb{R}.
\end{equation}

The empirical spectral measure $\mu_N$ of $W_N+D_N$ is related via its moments to the noncommutative distribution of $(W_N,D_N)$ in the noncommutative probability space of random matrices with entries having finite moments of any order $(\mathcal{M}_N(L^{\infty-}),\mathbb{E}N^{-1}\Tr )$. Under some additional assumptions, $W_N$ and $D_N$ have been proved in \cite{Dykema93, AGZbook, MinSpebook} to be asymptotically free, in the sense of free probability theory (see \cite{VDN92} for an introduction to free probability theory). As a consequence, the weak limit $\rho$ of $\mu_N$ is the distribution of the sum of free selfadjoint noncommutative random variables respectively distributed according to the limiting empirical spectral measures of $W_N$ and $D_N$. In other words, $\rho$ is the free additive convolution of the semicircular distribution $\mu_{\sigma^2}$ and $\nu_{\infty} $. Voiculescu noticed in \cite{Voiculescu93} that the Stieltjes transform of the free additive convolution $\mu \boxplus \nu$ of two Borel probability measures $\mu, \nu$ on $\mathbb{R}$ is generically subordinated (in the sense of Littlewood) to the Stieltjes transform of $\nu$: there exists an analytic self-map $\omega : \mathbb{C}^+ \to \mathbb{C}^+$ of the upper half-plane $\mathbb{C}^+:=\{z\in \mathbb{C},\, \Im z>0\}$ such that $G_{\mu \boxplus \nu}(z)=G_{\nu}(\omega (z)),\, z\in \mathbb{C}^+$. When $\mu =\mu_{\sigma^2}$ is the semicircle distribution with variance $\sigma^2$ and $\nu=\nu_{\infty}$, then $\omega(z)=z-\sigma^2G_{\rho}(z)$ and the subordination equation coincides with Pastur equation \eqref{Pastur}. 

The fluctuations of linear spectral statistics of deformed Wigner matrices were studied in \cite{Khorunzhy94} in the case of deformed GOE (real Gaussian entries) 
via the resolvent approach and discussed in \cite{Guionnet02} in the case of deformed GUE (complex Gaussian entries) 
via a dynamical approach. See also \cite{Su13} and \cite{BGEnrMic} for related works 
where $W_N$ and $D_N$ have different scales. More recently, deformed Wigner matrices 
with more general entries but for rank one deformations $D_N$ were considered, 
in relation with the free energy of the spherical Sherrington-Kirkpatrick model \cite{BaikLee17} or with 
statistical applications \cite{ChuLee}. During the preparation of this paper, the paper \cite{JiLee} appeared, 
dealing with full rank deformations of Wigner matrices; we discuss the differences between \cite{JiLee} and our work 
at the end of the Introduction.

In this paper, we will consider deformed Wigner matrices with general entries and
deterministic diagonal deformations $D_N$. We study fluctuations of 
$\int_{\mathbb{R}}\varphi(x)\mu_N(dx)$ around a deterministic equivalent $\int_{\mathbb{R}}\varphi(x)\rho_N(dx)$, 
where the Borel probability measure $\rho_N$ is defined, as $\rho$, by Pastur equation \eqref{Pastur}, 
but with $(N\sigma_N^2,\nu_N)$ instead of $(\sigma^2,\nu_{\infty})$, 
and for test functions $\varphi$ in the space $\mathcal{H}_s$ of all $f \in L^2(\R)$ such that 
\[\|f\|_{\mathcal{H}_s}:=\Big(\int_{\R}(1+2|t|)^{2s}|\hat{f}(t)|^2dt\Big)^{1/2}<+\infty, \]
for some $s>0$.
Our main result in this paper is the following:

\begin{thm} \label{main}
	Let $X_N$ be the deformed Wigner matrix satisfying assumptions \ref{hyp:indep}, \ref{hyp:offdiagonal}, \ref{hyp:diagonal}, \ref{hyp:deformation} in Section \ref{Model} below.
	\begin{enumerate}[label=(\arabic*)]
		\item\label{main1} For real-valued $\varphi \in \mathcal{H}_s,\, s>3/2$, the linear spectral statistics $\int_{\mathbb{R}}\varphi(x)\mu_N(dx)$ of $X_N$ satisfies:
		\[ N\Big(\int_{\mathbb{R}}\varphi(x)\mu_N(dx)-\mathbb{E}[\int_{\mathbb{R}}\varphi(x)\mu_N(dx)]\Big) \Rightarrow \mathcal{N}\big(0,V[\varphi]=C(\varphi,\varphi)\big),\]
		where the continuous Hermitian bilinear form $C:\mathcal{H}_s\times \mathcal{H}_s\to \mathbb{C}$ is determined by $C(\varphi_{z_1},\varphi_{z_2})=\Gamma(z_1,z_2),\, z_1,z_2\in \mathbb{C}\setminus \mathbb{R}$ (defined in Proposition \ref{hook}).
		\item\label{main2} For real-valued $\varphi \in \mathcal{H}_s,\, s>13/2$, $N\big(\mathbb{E}[\int_{\mathbb{R}}\varphi(x)\mu_N(dx)]-\int_{\mathbb{R}}\varphi(x)\rho_N(dx)\big) \underset{N\to +\infty}\longrightarrow b(\varphi)$,
		where the continuous linear form $b:\mathcal{H}_s \to \mathbb{C}$ is determined by its restriction to $\{\varphi_z:x\mapsto (z-x)^{-1}; z\in \C\setminus \R\}$ (given in Proposition \ref{convb}).
	\end{enumerate}
\end{thm}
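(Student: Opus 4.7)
The plan is to first prove the analogous statements for resolvent test functions $\varphi_z(x) = (z-x)^{-1}$, as encapsulated in Propositions \ref{hook} and \ref{convb}, and then to transfer these to general $\varphi \in \mathcal{H}_s$ via the Helffer-Sjöstrand functional calculus. Recall that if $\tilde{\varphi}$ denotes a suitable almost-analytic extension of $\varphi$, then
$$\int_{\mathbb{R}} \varphi(x)\, \mu_N(dx) = -\frac{1}{\pi} \int_{\mathbb{C}} \bar{\partial} \tilde{\varphi}(z)\, G_{\mu_N}(z) \, dA(z),$$
so after centering (resp. subtracting the deterministic equivalent $\rho_N$) and multiplying by $N$, both parts of the theorem reduce to passing to the limit in this integral against $G_{\mu_N}(z) - \mathbb{E} G_{\mu_N}(z)$ (resp. $\mathbb{E} G_{\mu_N}(z) - G_{\rho_N}(z)$).

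For part \ref{main1}, I would combine Proposition \ref{hook}, which gives the joint CLT of the marginals $\big(N(G_{\mu_N}(z_j) - \mathbb{E} G_{\mu_N}(z_j))\big)_{j=1}^{k}$ with covariance $(\Gamma(z_i,z_j))_{i,j}$, with a uniform $L^2$-bound of the form $\mathbb{E}|N(G_{\mu_N}(z) - \mathbb{E} G_{\mu_N}(z))|^2 \lesssim |\Im z|^{-\alpha}$. Since $\bar\partial \tilde\varphi(z)$ can be made to vanish to arbitrary order as $\Im z \to 0$ depending on the number of $L^2$-derivatives of $\varphi$, the regularity threshold $s>3/2$ is precisely what makes the resulting Helffer-Sjöstrand integral absolutely convergent. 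A Fubini/dominated convergence argument then yields the Gaussian limit with variance $C(\varphi,\varphi)$, which is continuous on $\mathcal{H}_s$ and identified on the total family $\{\varphi_z\}$ through $\Gamma$.

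Part \ref{main2} follows the same blueprint but exploits the general extension tool mentioned in the abstract, in the spirit of Shcherbina and Johansson: pointwise convergence $N(\mathbb{E} G_{\mu_N}(z) - G_{\rho_N}(z)) \to b(\varphi_z)$ from Proposition \ref{convb}, together with a polynomial a priori bound on the bias in $|\Im z|^{-1}$, suffice to transfer the convergence to $N\big(\mathbb{E}\int \varphi \, d\mu_N - \int \varphi \, d\rho_N\big) \to b(\varphi)$ for $\varphi$ in the appropriate $\mathcal{H}_s$. The jump from $s > 3/2$ to $s > 13/2$ reflects the fact that the bias is controlled with a much larger negative power of $|\Im z|$ than the variance, as it arises from second-order resolvent/cumulant expansions that generate extra factors of $G_{\mu_N}$; correspondingly more derivatives of $\varphi$ are required to tame $\bar\partial \tilde\varphi$ near the real line.

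The main obstacle, and the technical heart of the paper, is proving Propositions \ref{hook} and \ref{convb}: obtaining the joint CLT with explicit covariance $\Gamma$ and the sharp first-order bias $b(\varphi_z)$, both with quantitative remainder estimates in $|\Im z|$. I would derive these by the resolvent/cumulant (Stein-type integration by parts for Gaussian entries, extended by cumulant expansions à la Khorunzhy-Khoruzhenko-Pastur for general entries) method, exploiting the subordination structure encoded by $\omega(z) = z - N\sigma_N^2 G_{\rho_N}(z)$ in Pastur's equation \eqref{Pastur}. The diagonal nature of $D_N$ simplifies some matrix algebra, but the deformation still couples different resolvent entries through the subordination map, which is the source of the dependence of both $\Gamma$ and $b$ on $\nu_\infty$ that distinguishes the deformed case from Theorem \ref{BaoXieThm}.
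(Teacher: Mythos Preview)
Your overall architecture is sound, but both of your main technical choices differ from the paper's, and one of them would not quite deliver the stated result.

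\textbf{Extension to general $\varphi$.} You propose Helffer--Sj\"ostrand. The paper instead uses a pure density argument: $\mathcal{L}_1=\mathrm{span}\{\varphi_z\}$ is dense in $\mathcal{H}_s$ (Section~\ref{sec:Hs}), so it suffices to show the linear/quadratic functionals $b_N$ and $V_N$ are uniformly bounded on $\mathcal{H}_s$ and converge on $\mathcal{L}_1$; the conclusion then follows from Shcherbina's extension theorem for the CLT and the paper's own Lemma~\ref{lem:extension_bias} for the bias. The uniform bounds come from a Fourier identity in the spirit of Johansson (Proposition~\ref{prop_bound_bias_Hs} and its variance analogue), which produces integrals of the form $\int_0^\infty e^{-y}y^{2s-1}\int_\R|\beta_N(x+iy)|^2\,dx\,dy$. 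Plugging in the explicit resolvent bounds (Propositions~\ref{variancebound2} and~\ref{boundb}) yields convergence precisely when $s>3/2$ and $s>13/2$ respectively. Helffer--Sj\"ostrand, by contrast, is naturally adapted to $C_c^k$ with integer $k$: the almost-analytic extension needs pointwise derivative control, which $\mathcal{H}_s$ does not directly provide, so you would get the weaker $C_c^2$/$C_c^7$ statements rather than the full $\mathcal{H}_s$ ones. Your sentence ``$s>3/2$ is precisely what makes the Helffer--Sj\"ostrand integral absolutely convergent'' is not quite right---those fractional thresholds are an artifact of the Fourier/Johansson route, not of almost-analytic extensions.

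\textbf{Resolvent-level CLT and bias.} You propose cumulant expansions \`a la Khorunzhy--Khoruzhenko--Pastur. The paper instead uses the martingale approach throughout: Schur complement (Proposition~\ref{Schur}) to express $\Tr R_N(z)-\Tr R^{(k)}(z)$, the decomposition $(\mathbb{E}_{\leq k}-\mathbb{E}_{\leq k-1})[\Tr R_N(z)]$ as a martingale difference, and then the martingale CLT (Theorem~\ref{thm_CLT_martingale}) after checking Lyapounov's condition and convergence of the ``hook process'' $\Gamma_N(z_1,z_2)$ (Section~\ref{subsectionhook}). The bias is obtained by a third-order Taylor expansion of $R_N(z)_{kk}$ around $\tilde R(z)_{kk}$ (Lemma~\ref{taylor}), not by integration by parts. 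Both routes are viable in principle, but the martingale/Schur-complement machinery is what drives all the concrete estimates here, including the variance bound of Proposition~\ref{variancebound2} that feeds into the $s>3/2$ threshold.
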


The Borel probability measure $\rho_N$ defined above has a nice interpretation in terms of free probability:
 it is the free additive convolution of the semicircular distribution $\mu_{N\sigma_N^2}$ and $\nu_N$.
Theorem \ref{main} also has interpretations in terms of (extensions of) free probability theory, 
namely higher order freeness (see \cite{MinSpe06, MinSniSpe07, CMSS07}) and infinitesimal freeness (see \cite{BelShl12}). 
Indeed, roughly speaking, when one considers normalized traces of mixed words in certain $N\times N$  random matrices, 
whereas freeness is related to the large $N$ limit of their expectations (or first cumulants), 
infinitesimal freeness is related to the $1/N$ correction in this asymptotic, 
and higher order freeness is related to the large $N$ asymptotic of their higher cumulants. 
It is known \cite{CMSS07} that a GUE random matrix $W_N$ is asymptotically free of all orders from 
any deterministic Hermitian matrix $D_N$ whose empirical spectral measure converges in moments. 
It follows that, under these assumptions, the fluctuations of $\int_{\mathbb{R}}\varphi(x)\mu_N(dx)$ 
for polynomial $\varphi$ are Gaussian and their variance may be computed using the $R$-transform machinery from \cite{CMSS07}
(note that $\Gamma(z_1,z_2)$ from Proposition \ref{hook} with $\tau=\kappa=0$ and $s^2=\sigma^2$ 
indeed coincides with the second-order Cauchy transform that may be computed from the $R$-transform machinery).
Theorem \ref{main} suggests that asymptotic second order freeness still holds between $W_N$ and $D_N$ 
when $W_N$ is a Wigner matrix satisfying \ref{hyp:indep}, \ref{hyp:offdiagonal} and \ref{hyp:diagonal} with $\tau=\kappa=0$ and $s^2=\sigma^2$. 
Similarly, a random matrix $W_N$ whose distribution is invariant under the action (by conjugation) of the orthogonal group is asymptotically real second order free from 
any deterministic real symmetric matrix $D_N$ whose empirical spectral measure converges in moments \cite{MinPop13, Redelmeier14}. 
Theorem \ref{main} suggests that asymptotic real second order freeness still holds between $W_N$ and $D_N$ when $W_N$ is a Wigner matrix satisfying \ref{hyp:indep}, \ref{hyp:offdiagonal} and \ref{hyp:diagonal} with $s^2=\sigma^2$ and $\kappa=0$.
Another corollary of Theorem \ref{main} is that the bias $b(\varphi)$ vanishes 
whenever $s^2=\sigma^2, \tau=0$ and $\kappa=0$ (this is for instance the case for the deformed GUE). 
In that case, the mean empirical spectral measure $\mathbb{E}[\mu_N]$ is particularly well approximated 
by the free additive convolution $\rho_N$ of the semicircular distribution $\mu_{N\sigma_N^2}$ and $\nu_N$. 
This suggests that a Wigner matrix satisfying \ref{hyp:indep}, \ref{hyp:offdiagonal} and \ref{hyp:diagonal} with 
$s^2=\sigma^2$, $\tau=0$ and $\kappa=0$ 
is asymptotically infinitesimally free from bounded sequences of deterministic real diagonal matrices $D_N$
converging in noncommutative distribution. This was known for the GUE and a family of finite rank deterministic matrices 
(see \cite{Shlyakhtenko18}); we generalize this result in appendix.

Due to the simple characterization of $\rho$ and $\rho_N$ in terms of their Stieltjes transforms, 
it is natural to prove Theorem \ref{main} first for test functions 
$\varphi_z,\, z\in \mathbb{C}\setminus \mathbb{R}$, following the strategy of \cite{BaiYao05}. 
The linear spectral statistic associated to the test function 
$\varphi_z$ is related to the trace of the resolvent of $X_N$.
To deal with resolvents, we will use elementary linear algebra tools, 
including various consequences of Schur inversion formula. 
We will then use martingale arguments such as a classical CLT for martingale differences. 
Then we extend the result to test functions in some $\mathcal{H}_s$ by a density argument inspired by \cite{Shcherbina11}, 
justifying the statement made in Theorem \ref{main} that $b$ and $V$ are determined by 
their restrictions to the linear span of $\varphi_z,\, z\in \mathbb{C}\setminus \mathbb{R}$. 

The recent paper \cite{JiLee} addresses questions similar to the one studied in this work. 
Note that \cite{JiLee} also considers the case of a random diagonal deformation $D_N$ 
with independent identically distributed diagonal entries, independent from $W_N$; 
this case is not strictly in the scope of our work. In the case of a deterministic diagonal deformation, 
this paper extends results from \cite{JiLee} by relaxing some of their assumptions. 
First, the proofs are written for complex Hermitian as well as for real symmetric matrices. 
We ask the entries of $W_N$ to have a finite $4(1+\varepsilon)$ moment 
and, in the complex case, remove the assumption $\esp[W_{ij}^2]=0$ for $i\neq j$.
We assume instead that the real and imaginary parts of $W_{ij}$ are uncorrelated. 
Second, the result is proved under no assumption on the support of the empirical spectral measure of $D_N$ (for instance, existence of outlying eigenvalues or disconnected limiting support are allowed): this assumption does not seem necessary because the almost sure (exact) separation of the spectrum of deformed Wigner matrices (see \cite{CDFF11}) prevents it from the usual objection to Gaussian fluctuations 
for matrix models whose limiting empirical spectral measure has disconnected support.
Third, it is valid for functions in $\mathcal{C}_c^7(\R)$ ($\mathcal{C}_c^2(\R)$ for fluctuations around the mean). 
Technically, these improvements are obtained by bypassing the use of the precise local law from \cite{LSSY16}.
Fluctuations are extended to non analytic test functions by a density argument, proved in \cite{Shcherbina11} and very recently used by \cite{JiLee} for fluctuations around the mean. We adapt it to study the convergence of bias (Lemma \ref{lem:extension_bias} and Proposition \ref{prop_bound_bias_Hs}).

Besides the Introduction, the paper is organized in several other sections. Section \ref{Model} introduces the 
deformed Wigner matrices considered in this work. Section \ref{sec:preliminary_results} 
gathers tools from elementary linear algebra, martingale theory, complex analysis and functional analysis used in the proofs. 
Section \ref{concentration} is devoted to concentration bounds that are central in our approach. 
The proof of item \ref{main2} of Theorem \ref{main} can be found in Section \ref{bias}; 
the proof of item \ref{main1} of Theorem \ref{main} is the content of Sections \ref{CLT} and \ref{extension}. 
Two appendices conclude the paper: the first one details the truncation argument allowing to assume 
that entries of $W_N$ are almost surely bounded by a sequence slowly converging to $0$; 
the second one states and proves the free probabilistic interpretation of item $(2)$ of Theorem \ref{main}.

\section{Presentation of the model} \label{Model}

We consider, on a probability space, a sequence of deformed Wigner matrices $$X_N:=W_N+D_N,\quad N\geq 1,$$ where : 
\begin{enumerate}[label=(H\arabic*)]
	\item\label{hyp:indep} entries $\{W_{ij}\}_{1\leq i\leq j\leq N}$ of the $N\times N$ Hermitian matrix $W_N$ are independent random variables;
	\item\label{hyp:offdiagonal} off-diagonal entries $\{W_{ij}\}_{1\leq i< j\leq N}$ of $W_N$ are identically distributed complex random variables such that, for some $\varepsilon >0$, $C_4:=\sup_{N\geq 1}\mathbb{E}[|\sqrt{N}W_{ij}|^{4(1+\varepsilon)}]<+\infty$. We assume that $\mathbb{E}[W_{ij}]=0$ and that
	$$\sigma_N^2:=\mathbb{E}[|W_{ij}|^2]\geq 0,\quad \tau_N:=\mathbb{E}[W_{ij}^2]\in \mathbb{R},\quad \kappa_N:=\mathbb{E}[|W_{ij}|^4]-2\sigma_N^4-\tau_N^2\in \mathbb{R}$$
	satisfy 
	$$\lim_{N\to +\infty}N\sigma_N^2=\sigma^2> 0,\quad \lim_{N\to +\infty}N\tau_N=\tau\in \mathbb{R},\quad \lim_{N\to +\infty}N^2\kappa_N=\kappa\in \mathbb{R};$$
	\item\label{hyp:diagonal} diagonal entries $\{W_{ii}\}_{1\leq i\leq N}$ of $W_N$ are identically distributed real random variables such that, for some $\varepsilon >0$, $C_2:=\sup_{N\geq 1}\mathbb{E}[|\sqrt{N}W_{ii}|^{2(1+\varepsilon)}]<+\infty$. We assume that $\mathbb{E}[W_{ii}]=0$ and that
	$s_N^2:=\mathbb{E}[W_{ii}^2]\geq 0$ satisfies $\lim_{N\to +\infty}Ns_N^2=s^2\geq 0$;
	\item\label{hyp:deformation} $D_N$ is a $N\times N$ deterministic real diagonal matrix 
	such that, for some Borel probability measure $\nu_{\infty}$ on $\mathbb{R}$,
	$$\nu_N:=\frac{1}{N}\sum_{\lambda \in \text{sp}(D_N)}\delta_{\lambda} \Rightarrow \nu_{\infty} .$$
\end{enumerate}
In subsections \ref{subsectionconvb}, \ref{subsectionreduction} and \ref{subsectionhook}, 
we will also assume that all entries of $W_N$ are almost surely bounded by $\delta_N$, 
where $(\delta_N)_{N\geq 1}$ is a sequence of positive numbers slowly converging to $0$ 
(at rate less than $N^{-\eta}$ for any $\eta>0$);
this may be assumed without loss of generality, as shown in Appendix \ref{Truncation}. 
We will use the notation $m_N:=\esp[|W_{ij}|^4]=\kappa_N+2\sigma_N^4+\tau_N^2$.
In assumptions \ref{hyp:offdiagonal} and \ref{hyp:diagonal}, we ask the entries to be identically distributed. 
This assumption does not seem to be necessary for our main result to hold, but leads to a simplification of 
our truncation-centering argument (see Appendix \ref{Truncation}). Therefore, for the readability of the paper, 
we will not pursue the task to relax this assumption. Our assumption $\tau_N\in \mathbb{R}$ 
means that the real and imaginary parts of off-diagonal entries of $W_N$ are uncorrelated 
(but with possibly different variances).

We are interested in the empirical spectral measure $\mu_N$ of $X_N$, defined by: 
$$\mu_N:=\frac{1}{N}\sum_{\lambda \in \text{sp}(X_N)}\delta_{\lambda}.$$
More precisely, we study the fluctuations of the linear spectral statistic $\int_{\mathbb{R}}\varphi(x)\mu_N(dx)$ around its deterministic equivalent  $\int_{\mathbb{R}}\varphi(x)\rho_N(dx)$, for functions $\varphi: \R \to \R$ with enough regularity.

The linear spectral statistic associated to the test function 
$\varphi_z$ is the normalized trace of $$R_N(z):=(zI_N-X_N)^{-1}.$$
We will use Schur inversion formula, relating $R_N(z)$ to 
its one-dimensional Schur complements $$z-W_{kk}-D_{kk}-C_k^{(k)*}R^{(k)}(z)C_k^{(k)},$$
where $R^{(k)}$ is the resolvent of the $(N-1)\times(N-1)$ obtained from $X_N$ 
by deleting the $k$-th row/column and $C_k^{(k)}$  is the $(N-1)$-dimensional vector 
obtained from the $k$-th column of $W_N$ by deleting its $k$-th component. 
Martingales appearing in this paper will be with respect to the filtration 
$(\mathcal{F}_k:=\sigma(W_{ij},1\leq i\leq j\leq k))_{k\geq 1}$; 
$\mathbb{E}_{\leq k}$ denotes the conditional expectation on the sigma-field $\mathcal{F}_k$ 
and $\mathbb{E}_{k}$ the expectation with respect to the $k$-th column $\{W_{ik},1\leq i\leq N\}$ of $W_N$. 

\section{Preliminary results}\label{sec:preliminary_results}
In this section, we gather properties which will be used several times in the sequel.
\subsection{Linear algebra}
We start this subsection by recalling well-known properties of Schur complement. 

\begin{prop}[Schur complement]\label{Schur} Let $A\in \mathcal{M}_n({\mathbb{C}})$ 
and its submatrix $B$ obtained by removing its $k$-th diagonal entry $A_{kk}$, 
what remains of its $k$-th column $c$ and of its $k$-th row $r$. 
Then, if $A$ and $B$ are invertible, $A_{kk}-rB^{-1}c\neq 0$ and the following formulas hold:
$$(A^{-1})_{kk}=\frac{1}{A_{kk}-rB^{-1}c};\quad \Tr(A^{-1})-\Tr(B^{-1})=\frac{1+rB^{-2}c}{A_{kk}-rB^{-1}c}.$$
\end{prop}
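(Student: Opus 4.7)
The plan is to reduce to the case $k=n$ by conjugating $A$ with the permutation matrix exchanging coordinates $k$ and $n$; this operation preserves the trace, invertibility, and the block structure, so there is no loss of generality. Writing $A$ in block form
$$A=\begin{pmatrix} B & c \\ r & A_{kk} \end{pmatrix},$$
under the assumption that $B$ is invertible I would use the standard block $LDU$ factorization
$$A=\begin{pmatrix} I_{n-1} & 0 \\ rB^{-1} & 1 \end{pmatrix}\begin{pmatrix} B & 0 \\ 0 & A_{kk}-rB^{-1}c \end{pmatrix}\begin{pmatrix} I_{n-1} & B^{-1}c \\ 0 & 1 \end{pmatrix}.$$
From $\det A=\det B\cdot(A_{kk}-rB^{-1}c)$ one obtains that $A$ is invertible iff the Schur complement $A_{kk}-rB^{-1}c$ does not vanish, and inverting the three triangular factors yields an explicit formula for $A^{-1}$ whose $(n,n)$-entry is $(A_{kk}-rB^{-1}c)^{-1}$, which is the first identity.

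For the trace identity, I would read off from the same factorization the top-left $(n-1)\times(n-1)$ block of $A^{-1}$, which equals $B^{-1}+(A_{kk}-rB^{-1}c)^{-1}B^{-1}crB^{-1}$. Since $r$ is a $1\times(n-1)$ row and $c$ an $(n-1)\times 1$ column, $\Tr(B^{-1}crB^{-1})=rB^{-2}c$ is a scalar, and adding the $(n,n)$-entry $(A_{kk}-rB^{-1}c)^{-1}$ gives
$$\Tr(A^{-1})-\Tr(B^{-1})=\frac{rB^{-2}c+1}{A_{kk}-rB^{-1}c},$$
as claimed.

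The only delicate point is the direction \emph{if $A$ is invertible then $B$ is invertible}: this does not hold in full generality (a $2\times 2$ anti-diagonal permutation matrix provides a counterexample), so I would read the proposition as the assertion that, when $B$ is invertible, $A$ is invertible iff the Schur complement is nonzero, together with the resulting two formulas. This is sufficient for the paper's applications, where $B$ is of the form $zI_{N-1}-X_N^{(k)}$ with $z\in\mathbb{C}\setminus\mathbb{R}$, so that invertibility of $B$ is automatic. Apart from this bookkeeping, the whole proof is a direct algebraic manipulation with no real obstacle.
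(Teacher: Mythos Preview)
Your argument is correct and is the standard block-$LDU$ derivation of the Schur complement identities. The paper does not actually prove Proposition~\ref{Schur}; it is stated there as a recall of ``well-known properties of Schur complement'' without proof, so there is nothing to compare against beyond noting that your approach is the canonical one.

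Your caveat on the direction ``$A$ invertible $\Rightarrow$ $B$ invertible'' is well taken: as stated, that implication fails in general (your $2\times 2$ anti-diagonal example is the right counterexample), and the proposition should indeed be read under the standing hypothesis that $B$ is invertible. This is harmless for the paper's purposes, since every application is to $A=zI_N-X_N$ and $B=zI_{N-1}-X_N^{(k)}$ with $z\in\mathbb{C}\setminus\mathbb{R}$, where both matrices are automatically invertible by Hermiticity.
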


We will apply Proposition \ref{Schur} to express diagonal entries and trace 
of the resolvent of the Hermitian matrix $X_N$. 
The resolvent of a $n\times n$ matrix with complex entries 
$A\in \mathcal{M}_n(\mathbb{C})$ is the map $z\mapsto (zI_n-A)^{-1}$ 
defined on $\mathbb{C}\setminus \text{sp}(A)$ and satisfying 
$\big((zI_n-A)^{-1}\big)^*=(\overline{z}I_n-A^*)^{-1},\, z\in \mathbb{C}\setminus \text{sp}(A)$. 
It follows that the resolvent of a normal $n\times n$ matrix takes its values 
in the set of normal $n\times n$ matrices. In particular, the resolvent $R$ 
of a $n\times n$ Hermitian matrix $M$ is defined on $\mathbb{C}\setminus \mathbb{R}$ and satisfies:
\begin{equation}\label{resolventbound}
\| R(z)\| \leqslant \frac{1}{|\mathcal{I}z|},\quad z\in \mathbb{C}\setminus \mathbb{R}.
\end{equation}
The following Lemma is elementary but useful. 
\begin{lem}[Resolvent identity]\label{lem_resolvent_identity}
Let $M_1$ and $M_2$ be $n\times n$ Hermitian matrices and denote by $R_1$ and $R_2$ their respective resolvents. Then, for all $z_1,z_2 \in \C\setminus \R$, 
\[R_1(z_1)-R_2(z_2)=R_1(z_1)\Big((z_2-z_1)I_n+M_1-M_2\Big)R_2(z_2).\]
\end{lem}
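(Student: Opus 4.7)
The plan is to note that this is a one-line algebraic manipulation that rests solely on the definition of the resolvent and the invertibility of $z_i I_n - M_i$ for $z_i \in \mathbb{C}\setminus\mathbb{R}$ (which is guaranteed since $M_1,M_2$ are Hermitian, as already recorded around equation \eqref{resolventbound}). The strategy is to factor $R_1(z_1)$ on the left and $R_2(z_2)$ on the right of the difference $R_1(z_1)-R_2(z_2)$, which is possible once one inserts the identity in the form $R_i(z_i)^{-1}R_i(z_i) = I_n$.

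Concretely, I would write
\[
R_1(z_1) - R_2(z_2) = R_1(z_1)\bigl[R_2(z_2)^{-1}\bigr]R_2(z_2) - R_1(z_1)\bigl[R_1(z_1)^{-1}\bigr]R_2(z_2) = R_1(z_1)\bigl(R_2(z_2)^{-1}-R_1(z_1)^{-1}\bigr)R_2(z_2),
\]
and then substitute $R_i(z_i)^{-1}=z_i I_n - M_i$ to obtain
\[
R_2(z_2)^{-1}-R_1(z_1)^{-1} = (z_2 I_n - M_2) - (z_1 I_n - M_1) = (z_2-z_1)I_n + M_1 - M_2,
\]
which yields the claimed identity upon reinsertion.

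There is no substantive obstacle here: the statement is the classical resolvent identity, and its proof is a two-line computation. The only points worth mentioning in the write-up are that both factorizations are legitimate because $R_1(z_1)$ and $R_2(z_2)$ are genuine inverses on $\mathbb{C}\setminus\mathbb{R}$, and that the derivation does not use Hermiticity beyond ensuring this invertibility (so the same identity would hold for any two matrices and any two $z_i$ outside their spectra). I would keep the proof as a short displayed calculation with no additional commentary.
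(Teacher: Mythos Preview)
Your proof is correct and is the standard one-line verification of the resolvent identity. The paper itself does not spell out a proof for this lemma (it merely labels it ``elementary but useful''), so your argument is exactly the kind of short computation the authors have in mind.
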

As a consequence, the resolvent $R$ of a $n\times n$ Hermitian matrix $M$ satisfies:
\begin{equation}\label{im}
\mathcal{I}\psi(R(z))=-\mathcal{I}z\psi(R(z)^*R(z)),\, z\in \mathbb{C}\setminus \mathbb{R},
\end{equation}
where $\psi:\mathcal{M}_n(\mathbb{C})\to \mathbb{C}$ is any selfadjoint linear functional. 
Another immediate consequence of Lemma \ref{lem_resolvent_identity} 
is the following relation between the resolvents $R$ and $R^{(ab)}$ of a $n\times n$ Hermitian matrix $M$ 
and of the Hermitian matrix $M^{(ab)}$ obtained from $M$ by replacing its $(a,b)$ and $(b,a)$ entries by $0$:
\begin{equation}\label{remove}
R^{(ab)}(z)-R(z)=R^{(ab)}(z)(1-\frac{1}{2}\delta_{ab})(M_{ab}E_{ab}+\overline{M_{ab}}E_{ba})R(z).
\end{equation}

\begin{lem}\label{technicalbound}
Let $1 \leq k\leq n$ and $A,B\in \mathcal{M}_n(\mathbb{C})$. Define $C\in \mathcal{M}_n(\mathbb{C})$ by $C_{ij}=\sum_{m<k}A_{im}B_{mj}$, for $1 \leq i,j\leq n$.
Then $\|C\|\leq \| A\| \| B\|$. In particular, 
\[|C_{ij}|=\big|\sum_{m<k}A_{im}B_{mj}\big| \leq \| A\| \| B\|,\quad 1 \leq i,j\leq n;\]
\[\big(\sum_{i=1}^n|C_{ij}|^2\big)^{1/2}=\Big(\sum_{i=1}^n\big|\sum_{m<k}A_{im}B_{mj}\big|^2\Big)^{1/2} \leq \| A\| \| B\|,\quad 1 \leq j\leq n.\]
\end{lem}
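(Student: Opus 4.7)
The plan is to realize $C$ as a matrix product $A P_k B$ with $P_k$ a coordinate projection, and then read off all three bounds from the submultiplicativity of the operator norm together with the standard characterizations of that norm in terms of its action on canonical basis vectors.

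Concretely, I would introduce the diagonal matrix $P_k \in \mathcal{M}_n(\mathbb{C})$ with $(P_k)_{mm} = 1$ for $m < k$ and $(P_k)_{mm} = 0$ otherwise (so $P_k = 0$ in the degenerate case $k = 1$, which trivially gives $C = 0$). A one-line entrywise computation yields $(A P_k B)_{ij} = \sum_{m<k} A_{im} B_{mj} = C_{ij}$, hence $C = A P_k B$. Since $P_k$ is an orthogonal projection, $\|P_k\| \leq 1$, and submultiplicativity of the operator norm gives
\[ \|C\| = \|A P_k B\| \leq \|A\|\,\|P_k\|\,\|B\| \leq \|A\|\,\|B\|. \]

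For the two particular inequalities I would simply specialize to canonical basis vectors $e_1,\ldots,e_n$. Writing $C_{ij} = \langle e_i, C e_j \rangle$ and applying the Cauchy--Schwarz inequality yields $|C_{ij}| \leq \|e_i\|\,\|C e_j\| \leq \|C\|$, while the column bound follows from $\bigl(\sum_{i=1}^n |C_{ij}|^2\bigr)^{1/2} = \|C e_j\|_2 \leq \|C\|\,\|e_j\|_2 = \|C\|$. There is no real obstacle here: the entire argument is elementary and independent of the random matrix setting, the only mild subtlety being to keep the $k=1$ edge case in mind when asserting $\|P_k\| \leq 1$.
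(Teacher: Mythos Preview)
Your proof is correct and follows exactly the paper's approach: write $C = APB$ with $P$ the orthogonal projection onto the span of the first $k-1$ canonical basis vectors, then use $\|P\|\leq 1$ and submultiplicativity. Your additional remarks spelling out the entrywise and column bounds via canonical basis vectors are fine elaborations of what the paper leaves as ``In particular.''
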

\begin{proof}
Remark that $C=APB$, where $P$ is the orthogonal projection onto the subspace generated by the first $k-1$ vectors of the canonical basis. As $\|P\| \leq 1$, $\|C\|=\|APB\|\leq \|A\|\|P\|\|B\|\leq \|A\|\|B\|$.
\end{proof}

\begin{lem}\label{lipschitz}
If $\varphi:\R \to \C$ is Lipschitz continuous, then, 
for $n\times n$ Hermitian matrices $M_1, M_2$ and $p\geq 1$,
\[|\Tr(\varphi(M_1))-\Tr(\varphi(M_2))|^p\leq \| \varphi \|_{\text{Lip}}^pn^{p-1}\| M_1-M_2\|_{S^p}^p,\]
where $\| A\|_{S^p}=\big(\sum_{\lambda \in \text{sp}(A)}|\lambda|^p\big)^{1/p}$ is the Schatten $p$-norm of the normal matrix $A$.
\end{lem}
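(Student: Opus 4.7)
The plan is to diagonalize both matrices, reduce the trace-difference to a sum over eigenvalue indices in a common (decreasing) ordering, apply Lipschitz continuity pointwise to the eigenvalues, and then combine the Lidskii--Mirsky bound on eigenvalue differences with the classical H\"older inequality that converts an $\ell^1$ sum into an $\ell^p$ norm with a factor $n^{1-1/p}$.

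More precisely, denote by $\lambda_1^\downarrow(M) \geq \dots \geq \lambda_n^\downarrow(M)$ the eigenvalues of a Hermitian $n\times n$ matrix $M$ arranged in decreasing order. Since $\Tr(\varphi(M))=\sum_{i=1}^n \varphi(\lambda_i^\downarrow(M))$, the triangle inequality gives
\[
|\Tr(\varphi(M_1))-\Tr(\varphi(M_2))| \leq \sum_{i=1}^n |\varphi(\lambda_i^\downarrow(M_1))-\varphi(\lambda_i^\downarrow(M_2))| \leq \|\varphi\|_{\text{Lip}} \sum_{i=1}^n |\lambda_i^\downarrow(M_1)-\lambda_i^\downarrow(M_2)|.
\]
The key step is to control the $\ell^p$-norm of the vector of eigenvalue differences by $\|M_1-M_2\|_{S^p}$. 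This is Mirsky's theorem (a consequence of Lidskii's majorization inequality): for Hermitian $M_1,M_2$,
\[
\sum_{i=1}^n |\lambda_i^\downarrow(M_1)-\lambda_i^\downarrow(M_2)|^p \leq \sum_{i=1}^n |\lambda_i^\downarrow(M_1-M_2)|^p = \|M_1-M_2\|_{S^p}^p.
\]

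Combining the two estimates via H\"older's inequality ($\|x\|_{\ell^1} \leq n^{1-1/p}\|x\|_{\ell^p}$ on $\R^n$) applied to the vector with entries $|\lambda_i^\downarrow(M_1)-\lambda_i^\downarrow(M_2)|$, I obtain
\[
|\Tr(\varphi(M_1))-\Tr(\varphi(M_2))| \leq \|\varphi\|_{\text{Lip}} \, n^{1-1/p} \, \|M_1-M_2\|_{S^p},
\]
and raising to the power $p$ yields the announced inequality. I do not anticipate any real obstacle here since both ingredients (Lipschitz composition on scalars and the Lidskii--Mirsky inequality) are standard; the only point to be slightly careful about is that the eigenvalues of $M_1$ and $M_2$ must be paired according to a \emph{common} monotone ordering so that the elementary bound $\sum_i (\varphi(\lambda_i^\downarrow(M_1))-\varphi(\lambda_i^\downarrow(M_2)))=\Tr(\varphi(M_1))-\Tr(\varphi(M_2))$ is valid before the triangle inequality is applied.
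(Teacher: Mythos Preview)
Your proof is correct and follows essentially the same route as the paper's: order the eigenvalues monotonically, apply the Lipschitz bound pointwise, use H\"older to pass from $\ell^1$ to $\ell^p$ with the factor $n^{1-1/p}$, and invoke the eigenvalue perturbation inequality $\sum_i |\lambda_i^\downarrow(M_1)-\lambda_i^\downarrow(M_2)|^p \leq \|M_1-M_2\|_{S^p}^p$. The only cosmetic difference is that the paper calls this last step the Hoffman--Wielandt inequality while you (more accurately, for general $p$) attribute it to Lidskii--Mirsky.
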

\begin{proof}
Denote by $\lambda_1\geq \cdots \geq \lambda_n$ the eigenvalues of $M_1$ 
and $\mu_1\geq \cdots \geq\mu_n$ the eigenvalues of $M_2$. 
Then, \[|\Tr(\varphi(M_1))-\Tr(\varphi(M_2))|\leq \sum_{i=1}^n|\varphi(\lambda_i)-\varphi(\mu_i)|\leq \| \varphi \|_{\text{Lip}}\sum_{i=1}^n|\lambda_i-\mu_i|.\]
Using Hölder and Hoffman-Wielandt inequalities, 
\[|\Tr(\varphi(M_1))-\Tr(\varphi(M_2))|^p\leq \| \varphi \|_{\text{Lip}}^pn^{p-1}\sum_{i=1}^n|\lambda_i-\mu_i|^p\leq \| \varphi \|_{\text{Lip}}^pn^{p-1}\| M_1-M_2\|_{S^p}^p.\]
\end{proof}

\subsection{Martingales}
The proofs of our variance bounds and of our CLT rely on martingale theory.
\begin{lem}\label{martingalevariance}
Let $(M_k)_{k \in \N}$ be a square integrable complex martingale. 
Then
\[\mathbb{E}\Big[\big|\sum_{k=1}^N (M_k-M_{k-1})\big|^2\Big]=\sum_{k=1}^N \mathbb{E}\Big[\big|(M_k-M_{k-1})\big|^2\Big].\]
\end{lem}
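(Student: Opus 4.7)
The statement is the standard orthogonality of martingale differences, extended to the complex case. My plan is the following.

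Set $d_k := M_k - M_{k-1}$, so that the left-hand side becomes $\mathbb{E}\bigl[\bigl|\sum_{k=1}^N d_k\bigr|^2\bigr]$. Writing $|z|^2 = z\overline{z}$ and expanding yields
\[
\mathbb{E}\Bigl[\Bigl|\sum_{k=1}^N d_k\Bigr|^2\Bigr] = \sum_{k=1}^N \mathbb{E}\bigl[|d_k|^2\bigr] + \sum_{k\neq l}\mathbb{E}\bigl[d_k\overline{d_l}\bigr].
\]
The goal reduces to showing that each cross term $\mathbb{E}[d_k \overline{d_l}]$ with $k\neq l$ vanishes.

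By relabelling (or by noting that the two cross terms for an unordered pair are complex conjugates of each other), it suffices to treat $k>l$. In that case $d_l$ is $\mathcal{F}_l$-measurable, hence $\mathcal{F}_{k-1}$-measurable, and so is its complex conjugate. Conditioning on $\mathcal{F}_{k-1}$ and using the martingale property $\mathbb{E}[d_k \mid \mathcal{F}_{k-1}] = 0$, one gets
\[
\mathbb{E}\bigl[d_k \overline{d_l}\bigr] = \mathbb{E}\bigl[\mathbb{E}[d_k \overline{d_l}\mid \mathcal{F}_{k-1}]\bigr] = \mathbb{E}\bigl[\overline{d_l}\,\mathbb{E}[d_k\mid\mathcal{F}_{k-1}]\bigr] = 0.
\]
The pulling-out of $\overline{d_l}$ from the conditional expectation is justified by square integrability and the Cauchy--Schwarz inequality, which ensures $d_k\overline{d_l}\in L^1$.

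There is no serious obstacle: the only mildly delicate point is to check that in the complex setting one may freely factor $\overline{d_l}$ out of the conditional expectation, which is standard once integrability of the product is secured via Cauchy--Schwarz applied to the square-integrable differences. Combining the two displays gives the claimed identity.
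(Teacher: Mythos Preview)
Your proof is correct and is the standard argument via orthogonality of martingale differences. The paper actually states this lemma without proof, treating it as a well-known fact from martingale theory, so there is nothing to compare against; your argument would serve perfectly well as the omitted justification.
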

Note that, if $M_0$ is deterministic (this will be the case of our martingales in this paper), the left-hand side of the equality stated in the preceding Lemma is $\Var M_N$. The following result may be deduced from its version for real-valued martingales (Theorem 35.12 in \cite{Billingsleybook}).

\begin{thm}\label{thm_CLT_martingale}
Suppose that, for all $N\geq 1$, $(M_k^{(N)})_{k \in \N}$ is a square integrable complex martingale and define, for $k \geq 1$, $\Delta_k^{(N)}:=M_k^{(N)}-M_{k-1}^{(N)}$. If
\begin{equation}\label{L}
\forall \varepsilon > 0,\, L(\varepsilon,N):=\sum_{k\geq 1}\mathbb{E}[|\Delta_k^{(N)}|^2\mathbf{1}_{|\Delta_k^{(N)}|\geq \varepsilon}]\underset{N\to +\infty}{\longrightarrow} 0,
\end{equation}
\begin{equation}\label{v}
V_N:=\sum_{k\geq 1}\mathbb{E}_{\leq k-1}[|\Delta_k^{(N)}|^2]\underset{N\to +\infty}{\longrightarrow} v\geq 0,
\end{equation}
and
\begin{equation}\label{w}
W_N:=\sum_{k\geq 1}\mathbb{E}_{\leq k-1}[(\Delta_k^{(N)})^2]\underset{N\to +\infty}{\longrightarrow} w \in \mathbb{C}
\end{equation}
(convergences in \eqref{v} and \eqref{w} have to be understood in probability), then 
\[\sum_{k\geq 1}\Delta_k^{(N)}\Rightarrow_{N\to +\infty}\mathcal{N}_{\mathbb{C}}(0,v,w).\]
\end{thm}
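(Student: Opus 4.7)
The plan is to deduce this complex-valued martingale CLT from its real-valued counterpart (Theorem 35.12 in \cite{Billingsleybook}) via the Cramér--Wold device. Identifying $\C$ with $\R^2$, convergence in distribution of $S^{(N)}:=\sum_{k\geq 1}\Delta_k^{(N)}$ to $Z\sim\mathcal{N}_{\C}(0,v,w)$ will follow once I establish, for every $\zeta\in\C$, that the real-valued sum $\Re(\bar\zeta\,S^{(N)})$ converges in distribution to the centered real Gaussian $\Re(\bar\zeta Z)$.

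First I fix $\zeta\in\C$ and set $\tilde\Delta_k^{(N)}:=\Re(\bar\zeta\,\Delta_k^{(N)})$. Since taking real parts commutes with conditional expectation, $(\tilde\Delta_k^{(N)})_{k\geq 1}$ is a real, square-integrable martingale-difference array with respect to the same filtration, so Billingsley's real-valued CLT is applicable. Its Lindeberg condition follows at once from $|\tilde\Delta_k^{(N)}|\leq|\zeta|\,|\Delta_k^{(N)}|$, which shows $\{|\tilde\Delta_k^{(N)}|\geq\varepsilon\}\subseteq\{|\Delta_k^{(N)}|\geq\varepsilon/|\zeta|\}$ and thus transfers hypothesis \eqref{L}. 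For the conditional second moment, the pointwise identity
\[|\tilde\Delta_k^{(N)}|^2=\tfrac{1}{2}|\zeta|^2\,|\Delta_k^{(N)}|^2+\tfrac{1}{2}\Re\bigl(\bar\zeta^2(\Delta_k^{(N)})^2\bigr),\]
together with linearity of conditional expectation and hypotheses \eqref{v} and \eqref{w}, yields
\[\sum_{k\geq 1}\mathbb{E}_{\leq k-1}\bigl[|\tilde\Delta_k^{(N)}|^2\bigr]\CV{N}v_\zeta:=\tfrac{1}{2}|\zeta|^2 v+\tfrac{1}{2}\Re(\bar\zeta^2 w)\]
in probability. Billingsley's theorem then delivers $\sum_k\tilde\Delta_k^{(N)}\Rightarrow\mathcal{N}(0,v_\zeta)$.

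It remains to match $v_\zeta$ with $\Var(\Re(\bar\zeta Z))$ for $Z\sim\mathcal{N}_{\C}(0,v,w)$. Writing $Z=X+iY$, the real covariance of $(X,Y)$ has diagonal entries $\tfrac{1}{2}(v\pm \Re(w))$ and off-diagonal entry $\tfrac{1}{2}\Im(w)$; expanding $\Re(\bar\zeta Z)=(\Re\zeta)X+(\Im\zeta)Y$ recovers precisely $v_\zeta$. The conditional Cauchy--Schwarz inequality applied to $\Delta_k^{(N)}$ gives $|W_N|\leq V_N$, hence $|w|\leq v$, which ensures that the above covariance matrix is positive semi-definite and therefore that $\mathcal{N}_{\C}(0,v,w)$ is a legitimate distribution. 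The Cramér--Wold theorem, applied to the two real components of $S^{(N)}$ (equivalently, to the family of all $\zeta\in\C$), then completes the argument and yields $S^{(N)}\Rightarrow Z$ in $\C$.

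The main obstacle is essentially bookkeeping, since the analytic content is encapsulated in Billingsley's real-valued theorem. The one point requiring care is the pairing of the two complex second-moment hypotheses: neither $V_N$ nor $W_N$ alone controls the variance $v_\zeta$ of a real projection, and it is precisely their combination via the identity above that matches $\Var(\Re(\bar\zeta Z))$. Once this decomposition is set up and the bound $|w|\leq v$ is recorded, the Cramér--Wold reduction finishes the proof.
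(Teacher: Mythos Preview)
Your proof is correct and follows exactly the route the paper indicates: the paper does not spell out a proof but simply states that the result ``may be deduced from its version for real-valued martingales (Theorem~35.12 in \cite{Billingsleybook}),'' and your Cram\'er--Wold reduction to Billingsley's real-valued martingale CLT is precisely that deduction, carried out in full detail.
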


\subsection{Analytic subordination for free additive convolution}
Voiculescu noticed in \cite{Voiculescu93} that the Stieltjes transform of the free additive convolution $\mu \boxplus \nu$ of two Borel  probability measures $\mu, \nu$ on $\mathbb{R}$ is generically subordinated (in the sense of Littlewood) to the Stieltjes transform of $\nu$: there exists an analytic self-map of the upper half-plane $\omega : \mathbb{C}^+ \to \mathbb{C}^+$ satisfying $(iy)^{-1}\omega(iy) \longrightarrow 1$ when $y\longrightarrow +\infty$ such that $G_{\mu \boxplus \nu}(z)=G_{\nu}(\omega (z)),\, z\in \mathbb{C}^+$. In fact, the subordination relation holds at the level of operators: if $x,y$ are free selfadjoint noncommutative random variables in a tracial $W^*$-probability space, then $E((z1-x-y)^{-1})=(\omega(z)1-y)^{-1},\, z\in \mathbb{C}^+$, where $E$ is the conditional expectation with respect to the von Neumann subalgebra generated by $y$ \cite{Biane98}. Note that  these subordination relations hold for $z\in \mathbb{C}\setminus \mathbb{R}$ if $\omega$ is analytically continued by Schwarz reflection principle. 

When $\mu=\mu_v$ is a semicircular distribution, then $\omega(z)=z-vG_{\mu\boxplus \nu}(z)$. The subordination map $\omega $ is then a conformal map from $\mathbb{C}^+$ onto a simply connected domain $\Omega \subset \mathbb{C}^+$, its inverse being the restriction to $\Omega$ of $H : \mathbb{C}^+ \to \mathbb{C}$ defined by $H(z)=z+vG_{\nu}(z)$; moreover $\Omega=H^{-1}(\mathbb{C}^+)$ (see \cite{Biane97b} for these results). We will, by a slight abuse, use the notation $\omega$, $\Omega$ and $H$ when $v=\sigma^2$ and $\nu=\nu_{\infty}$ and $\omega_N$, $\Omega_N$ and $H_N$ when $v=N\sigma_N^2$ and $\nu=\nu_N$. It follows from the continuity of free additive convolution that the sequence of analytic maps $(\omega_N)_{N\geq 1}$ converges uniformly on compact sets of $\mathbb{C}^+$ to $\omega$.

\subsection{Definition and properties of $\mathcal{H}_s$}\label{sec:Hs}
For $s>0$, define the normed linear space $\mathcal{H}_s$ of all (real or complex-valued) $f \in L^2(\R)$ such that 
\[\|f\|_{\mathcal{H}_s}:=\Big(\int_{\R}(1+2|t|)^{2s}|\hat{f}(t)|^2dt\Big)^{1/2}<+\infty. \]

It is proved in \cite{BGEnrMic} (Lemma 13) that $\text{span}\{\varphi_z:x\mapsto (z-x)^{-1}; z\in \C\setminus \R\}$ 
is dense in $(\mathcal{H}_s,\| \|_{\mathcal{H}_s})$, for any $s>0$.

\begin{prop}[Sobolev embedding]
For $s>\frac{1}{2}$, there is a continuous injection $j$ of $(\mathcal{H}_s,\|\cdot\|_{\mathcal{H}_s})$ into $(\mathcal{C}_b,\|\cdot\|_{\infty})$, i.e. there exists a constant $C_s>0$ such that, for all $\varphi \in \mathcal{H}_s$,
\[ \|j(\varphi)\|_{\infty} \leq C_s \|\varphi\|_{\mathcal{H}_s}. \]
\end{prop}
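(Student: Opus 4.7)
The plan is to prove the embedding through Fourier inversion and a weighted Cauchy--Schwarz estimate. First, for $f\in \mathcal{H}_s$ with $s>1/2$, I would decompose the integrand $|\hat{f}(t)|$ as the product of $(1+2|t|)^{-s}$ and $(1+2|t|)^{s}|\hat{f}(t)|$ and apply the Cauchy--Schwarz inequality:
\[
\int_{\R}|\hat{f}(t)|\,dt \leq \Big(\int_{\R}(1+2|t|)^{-2s}\,dt\Big)^{1/2}\Big(\int_{\R}(1+2|t|)^{2s}|\hat{f}(t)|^2\,dt\Big)^{1/2}.
\]
The first factor is finite precisely when $2s>1$, which is our assumption, and yields a constant $C_s:=\big(\int_{\R}(1+2|t|)^{-2s}dt\big)^{1/2}<\infty$; the second factor is $\|f\|_{\mathcal{H}_s}$. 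This shows $\hat f\in L^1(\R)$, with $\|\hat f\|_{L^1}\leq C_s\|f\|_{\mathcal{H}_s}$.

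Since $f\in L^2(\R)$ and $\hat f\in L^1(\R)$, the Fourier inversion theorem supplies a continuous representative $j(f)$ of the a.e.-equivalence class $f$, defined pointwise by $j(f)(x)=\int_{\R}\hat f(t)e^{2i\pi tx}\,dt$ (with the Fourier convention of the paper); by Riemann--Lebesgue this representative is in fact bounded and vanishes at infinity, hence lies in $\mathcal{C}_b(\R)$. The pointwise bound
\[
|j(f)(x)|\leq \int_{\R}|\hat f(t)|\,dt \leq C_s\|f\|_{\mathcal{H}_s},\qquad x\in \R,
\]
gives $\|j(f)\|_{\infty}\leq C_s\|f\|_{\mathcal{H}_s}$, so $j:\mathcal{H}_s\to \mathcal{C}_b$ is continuous. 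Linearity of $j$ is clear from the linearity of the Fourier transform and of Fourier inversion.

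Injectivity follows from the uniqueness of the Fourier transform on $L^2$: if $j(f)\equiv 0$ then $f=0$ a.e., hence $f=0$ in $\mathcal{H}_s\subset L^2$. There is essentially no obstacle here; the only care needed is to pass from the $L^2$-class $f$ to a well-defined pointwise object, which is exactly what the $L^1$-integrability of $\hat f$ allows via the Fourier inversion theorem. The sharpness of the threshold $s>1/2$ also follows from the explicit divergence of $\int(1+2|t|)^{-2s}dt$ at $s=1/2$, which would be the only place the argument could fail.
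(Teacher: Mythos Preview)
Your proof is correct and follows essentially the same route as the paper: a weighted Cauchy--Schwarz estimate shows $\hat f\in L^1$ with $\|\hat f\|_{L^1}\le C_s\|f\|_{\mathcal{H}_s}$, and Fourier inversion then produces a bounded continuous representative with the desired sup-norm bound. The paper's argument is identical up to Fourier normalization (it uses the $(2\pi)^{-1}$ convention rather than the $e^{2i\pi tx}$ one you wrote), and you add the short remark on injectivity, which the paper omits.
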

\begin{proof}
This is a particular case of Sobolev embedding. We give a simple argument to prove this fact in our situation. 
As $s>\frac{1}{2}$, the Fourier transform induces a bounded linear map $(\mathcal{H}_s,\|\cdot \|_{\mathcal{H}_s}) \to (L^1(\mathbb{R})\cap L^2(\mathbb{R}),\|\cdot \|_{1})$. Indeed, for $\varphi \in \mathcal{H}_s$, by Cauchy-Schwarz inequality,
\begin{align*}
 \|\hat{\varphi}\|_1 & = \int_{\R}(1+2|t|)^s|\hat{\varphi}(t)|(1+2|t|)^{-s}dt \leq \|\varphi\|_{\mathcal{H}_s}\Big(\int_{\R}(1+2|t|)^{-2s}dt\Big)^{1/2}.
\end{align*}
Therefore, by Fourier inversion formula, one may define a bounded continuous version of $\varphi$ by $j(\varphi) : t\mapsto (2\pi)^{-1}\int_{\R}\hat{\varphi}(-\xi)e^{it\xi}d\xi$. Moreover, this defines a bounded linear map $j : (\mathcal{H}_s,\|\cdot \|_{\mathcal{H}_s}) \to (\mathcal{C}_b(\mathbb{R}),\|\cdot \|_{\infty})$:
\[\|j(\varphi)\|_{\infty} \leq (2\pi)^{-1}\|\hat{\varphi}\|_1\leq C_s\|\varphi\|_{\mathcal{H}_s}.\]
\end{proof}

Therefore, given a continuous linear functional $T$ on $(\mathcal{C}_b(\mathbb{R}),\|\cdot \|_{\infty})$, one may properly define the continuous linear functional (still denoted by) $T$ on $(\mathcal{H}_s,\|\cdot \|_{\mathcal{H}_s})$ by $T(\varphi)=T(j(\varphi))$. This is how the linear spectral statistic $\int_{\mathbb{R}}\varphi(x)\mu_N(dx)$ is defined for $\varphi \in \mathcal{H}_s$.

Another remarkable and well-known property of $\mathcal{H}_s$ is that, for $s \leq k$, $\mathcal{C}^k_c(\R) \subset \mathcal{H}_s$, due to the link between regularity of a function and decreasing at infinity of its Fourier transform. Therefore, item \ref{main1} of Theorem \ref{main} is true in particular for functions in $\mathcal{C}^2_c(\R)$ and item \ref{main2} for functions in $\mathcal{C}^7_c(\R)$.

\section{Concentration bounds}\label{concentration}

In this Section, one derives concentration bounds that hold, unless explicitly stated, 
without assuming the entries of $W_N$ to be bounded. 

\subsection{Quadratic forms}

Applying Proposition \ref{Schur} to $zI_N-X_N$ leads to expressions involving 
random quadratic forms $C_k^{(k)*}R^{(k)}(z)C_k^{(k)}$. 
It is easy to compute the expectation of such quadratic forms. 
(Recall that $\esp_k$ denotes the expectation with respect to $\{W_{ik},1 \leq i \leq N\}$.)
\[\mathbb{E}\Big[C_k^{(k)*}R^{(k)}(z)C_k^{(k)}\Big]=\mathbb{E}\Big[\mathbb{E}_k[C_k^{(k)*}R^{(k)}(z)C_k^{(k)}]\Big]=\sigma_N^2\mathbb{E}[\Tr(R^{(k)}(z))].\]
Their variance may be deduced from the following Lemma:

\begin{lem}\label{quadratic forms}
For $(N-1)\times (N-1)$ random matrices $A,B$ independent of $\{W_{ik},1\leq i\leq N\}$ (for convenience, rows and columns of $A, B$ are indexed by $\{1,\ldots ,N\}\setminus\{k\}$), and $h\in \{1,\ldots ,N\}$,
$$\mathbb{E}_k\Big[\mathbb{E}_{\leq h}\big[C_k^{(k)*}AC_k^{(k)}-\sigma_N^2\Tr(A)\big]\mathbb{E}_{\leq h}\big[C_k^{(k)*}BC_k^{(k)}-\sigma_N^2\Tr(B)\big]\Big]$$
$$=\sigma_N^4\sum_{i,j\neq k\leq h}\mathbb{E}_{\leq h}[A_{ij}]\mathbb{E}_{\leq h}[B_{ji}]+\tau_N^2\sum_{i,j\neq k\leq h}\mathbb{E}_{\leq h}[A_{ij}]\mathbb{E}_{\leq h}[B_{ij}]+\kappa_N\sum_{i\neq k\leq h}\mathbb{E}_{\leq h}[A_{ii}]\mathbb{E}_{\leq h}[B_{ii}].$$
\end{lem}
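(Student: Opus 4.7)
The strategy is to reduce the computation to the standard covariance formula for two centered quadratic forms in an i.i.d.\ vector, after using the filtration structure to absorb the conditional expectation $\mathbb{E}_{\le h}$. I would split into two cases according to whether $k > h$ or $k \le h$.

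In the case $k > h$, a direct check from the definition shows that $W_{ik}$ belongs to $\mathcal{F}_h = \sigma(W_{ab}, a \le b \le h)$ if and only if $k \le h$ and $i \le h$, so no entry of the $k$-th column of $W_N$ lies in $\mathcal{F}_h$. Combined with the independence of $A,B$ from $C_k^{(k)}$, this gives $\mathbb{E}_{\le h}[C_k^{(k)*} A C_k^{(k)} - \sigma_N^2 \Tr A] = 0$ and similarly for $B$, while the right-hand side is empty under the natural reading that the constraints $i,j \le h$ and $i,j \ne k$ together entail $k \le h$; both sides then vanish.

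In the main case $k \le h$, I would introduce $v_h := (W_{ik})_{i \le h, i \ne k}$, the $\mathcal{F}_h$-measurable part of the $k$-th column, and note that the remaining entries $(W_{ik})_{i > h}$ are independent of $\mathcal{F}_h$. Decomposing $\mathcal{F}_h = \mathcal{F}_h^{(k)} \vee \mathcal{F}_h^{(-k)}$ into the independent sub-fields $\mathcal{F}_h^{(k)}$ (generated by the $k$-th column entries lying in $\mathcal{F}_h$) and $\mathcal{F}_h^{(-k)}$ (generated by the remaining entries of $\mathcal{F}_h$), the hypothesis that $A$ is independent of the whole $k$-th column forces $\widetilde A := \mathbb{E}_{\le h}[A]$ to be $\mathcal{F}_h^{(-k)}$-measurable, hence independent of $v_h$. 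Integrating out $(W_{ik})_{i > h}$ then yields
\[ \mathbb{E}_{\le h}[C_k^{(k)*} A C_k^{(k)} - \sigma_N^2 \Tr A] = v_h^* \widetilde A' v_h - \sigma_N^2 \Tr \widetilde A', \]
where $\widetilde A'$ is the submatrix of $\widetilde A$ on rows and columns indexed by $\{i \le h, i \ne k\}$, and analogously for $B$.

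Since $\widetilde A', \widetilde B'$ are independent of $v_h$, applying $\mathbb{E}_k$ to the product reduces to computing the covariance of two centered quadratic forms in the i.i.d.\ complex vector $v_h$, whose entries satisfy $\mathbb{E}[w] = 0$, $\mathbb{E}[|w|^2] = \sigma_N^2$, $\mathbb{E}[w^2] = \tau_N$, and $\mathbb{E}[|w|^4] = m_N = \kappa_N + 2\sigma_N^4 + \tau_N^2$. Enumerating the pairings in
\[ \mathbb{E}[\overline{w_a} w_b \overline{w_c} w_d] = \sigma_N^4\bigl(\delta_{ab}\delta_{cd} + \delta_{ad}\delta_{bc}\bigr) + \tau_N^2 \delta_{ac}\delta_{bd} + \kappa_N \mathbf{1}_{a=b=c=d} \]
(the diagonal coefficient being exactly the excess $m_N - 2\sigma_N^4 - \tau_N^2 = \kappa_N$), and expanding, I would obtain for deterministic $M_1, M_2$
\[ \mathbb{E}\bigl[(v_h^* M_1 v_h - \sigma_N^2 \Tr M_1)(v_h^* M_2 v_h - \sigma_N^2 \Tr M_2)\bigr] = \sigma_N^4 \sum_{i,j}(M_1)_{ij}(M_2)_{ji} + \tau_N^2 \sum_{i,j}(M_1)_{ij}(M_2)_{ij} + \kappa_N \sum_i (M_1)_{ii}(M_2)_{ii}, \]
the $\sigma_N^2 \Tr$ centering cancelling the disconnected $\sigma_N^4 \Tr(M_1)\Tr(M_2)$ term. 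Specializing to $M_1 = \widetilde A'$ and $M_2 = \widetilde B'$ gives the claimed identity. The main obstacle is the combinatorial bookkeeping in this last step: identifying the three non-vanishing Wick-type pairings (two with coefficient $\sigma_N^4$, one with $\tau_N^2$) and isolating the fourth-cumulant contribution $\kappa_N$, rather than the full fourth moment $m_N$, on the diagonal.
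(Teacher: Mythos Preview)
Your argument for the main case $k\le h$ is correct and is essentially the paper's proof: both reduce $\mathbb{E}_{\le h}$ of the centered quadratic form to $v_h^*\widetilde A' v_h-\sigma_N^2\Tr\widetilde A'$ (the paper writes this out term by term rather than naming the submatrix) and then expand $\mathbb{E}_k$ of the product via the fourth-moment pairings of the i.i.d.\ entries $(W_{ik})_{i\le h,\,i\ne k}$, with the diagonal contribution $m_N-2\sigma_N^4-\tau_N^2=\kappa_N$ producing the last sum.

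One caveat: your disposal of the case $k>h$ is not right. The constraints $i,j\le h$ and $i,j\ne k$ do \emph{not} entail $k\le h$ (when $k>h$, any $i\le h$ automatically satisfies $i\ne k$), so under the standard reading the right-hand side is a non-empty sum, and in fact the identity fails for $k>h$ (take $A=B=I$). The paper's own proof also tacitly assumes $k\le h$ --- pulling $W_{ki}$ out of $\mathbb{E}_{\le h}$ for $i\le h$ requires $W_{ki}\in\mathcal F_h$, hence $\max(i,k)=k\le h$ --- and the lemma is only ever applied with $h=N$ or $h=k$. So this is a looseness in the lemma's statement rather than a gap in your argument for the substantive case.
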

In particular, with $A=R^{(k)}(z)^{q*}, B=R^{(k)}(z)^q, q\in \{1,2\}$, $h=N$, 
$$\mathbb{E}_k\Big[\big|C_k^{(k)*}R^{(k)}(z)^qC_k^{(k)}-\sigma_N^2\Tr(R^{(k)}(z)^q)\big|^2\Big]$$
$$=\sigma_N^4\Tr(R^{(k)}(z)^{q*}R^{(k)}(z)^q)+\tau_N^2\Tr(\overline{R^{(k)}(z)^{q}}R^{(k)}(z)^q)+\kappa_N\sum_{i\neq k}|R^{(k)}(z)^q_{ii}|^2$$
$$\leq m_N\Tr(R^{(k)}(z)^{q*}R^{(k)}(z)^q).$$

\begin{proof}[Proof of Lemma \ref{quadratic forms}]
Since
\begin{align*}
\mathbb{E}_{\leq h}[C_k^{(k)*}AC_k^{(k)}-\sigma_N^2\Tr(A)] & = \mathbb{E}_{\leq h}\Big[\sum_{i\neq j\neq k}\overline{W_{ki}}W_{kj}A_{ij}+\sum_{i\neq k}(|W_{ki}|^2-\sigma_N^2)A_{ii}\Big]\\
										      & = \sum_{i\neq j\neq k\leq h}\overline{W_{ki}}W_{kj}\mathbb{E}_{\leq h}[A_{ij}]+\sum_{i\neq k\leq h}(|W_{ki}|^2-\sigma_N^2)\mathbb{E}_{\leq h}[A_{ii}]
\end{align*}
and
\begin{align*}
\mathbb{E}_{\leq h}[C_k^{(k)*}BC_k^{(k)}-\sigma_N^2\Tr(B)] & = \mathbb{E}_{\leq h}\Big[\sum_{i'\neq j'\neq k}\overline{W_{ki'}}W_{kj'}B_{i'j'}+\sum_{i'\neq k}(|W_{ki'}|^2-\sigma_N^2)B_{i'i'}\Big]\\
										     & = \sum_{i'\neq j'\neq k\leq h}\overline{W_{ki'}}W_{kj'}\mathbb{E}_{\leq h}[B_{i'j'}]+\sum_{i'\neq k\leq h}(|W_{ki'}|^2-\sigma_N^2)\mathbb{E}_{\leq h}[B_{i'i'}],
\end{align*}
it follows that 
$$\mathbb{E}_k\Big[\mathbb{E}_{\leq h}\big[C_k^{(k)*}AC_k^{(k)}-\sigma_N^2\Tr(A)\big]\mathbb{E}_{\leq h}\big[C_k^{(k)*}BC_k^{(k)}-\sigma_N^2\Tr(B)\big]\Big]$$
$$=\sum_{i\neq j\neq k\leq h}\mathbb{E}[|W_{ki}|^2]\mathbb{E}[|W_{kj}|^2]\mathbb{E}_{\leq h}[A_{ij}]\mathbb{E}_{\leq h}[B_{ji}]$$
$$+\sum_{i\neq j\neq k\leq h}\mathbb{E}[\overline{W_{ki}}^2]\mathbb{E}[W_{kj}^2]\mathbb{E}_{\leq h}[A_{ij}]\mathbb{E}_{\leq h}[B_{ij}]$$
$$+\sum_{i\neq k\leq h}\mathbb{E}\big[(|W_{ki}|^2-\sigma_N^2)^2\big]\mathbb{E}_{\leq h}[A_{ii}]\mathbb{E}_{\leq h}[B_{ii}]$$
$$=\sigma_N^4\sum_{i,j\neq k\leq h}\mathbb{E}_{\leq h}[A_{ij}]\mathbb{E}_{\leq h}[B_{ji}]+\tau_N^2\sum_{i,j\neq k\leq h}\mathbb{E}_{\leq h}[A_{ij}]\mathbb{E}_{\leq h}[B_{ij}]+\kappa_N\sum_{i\neq k\leq h}\mathbb{E}_{\leq h}[A_{ii}]\mathbb{E}_{\leq h}[B_{ii}].$$
\end{proof}

To obtain more general $L^p$ concentration bounds for quadratic forms, we will use Lemma 2.7 in \cite{BaiSil98}:

\begin{lem}\label{lem_moment_quadratic_forms}
	Let $p \geqslant 2$ and $Y=(Y_1, \ldots , Y_n)$ be a $n$-tuple of independent identically distributed standard complex entries 
			with finite $2p$-th moment. Then, for all $A\in \mathcal{M}_n(\mathbb{C})$,
	\[
	\esp\big[|Y^*AY-\Tr (A)|^p\big] \leqslant K_p \Big(\big(\esp\big[|Y_1|^4\big]\Tr (AA^*)\big)^{p/2}+\esp\big[|Y_1|^{2p}\big]\Tr((AA^*)^{p/2})\Big).
	\]
\end{lem}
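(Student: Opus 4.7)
The plan is to adapt the classical martingale method of \cite{BaiSil98}. First I would decompose
\[ Y^*AY - \Tr(A) = \sum_{i=1}^n (|Y_i|^2 - 1)A_{ii} + \sum_{1 \leq i \neq j \leq n} \overline{Y_i}\,Y_j\,A_{ij} =: D + O, \]
and since the claimed right-hand side is stable under this splitting (up to a factor of $2^{p-1}$), reduce to bounding $\esp|D|^p$ and $\esp|O|^p$ separately. For the diagonal part $D$, the summands $(|Y_i|^2-1)A_{ii}$ are independent, centered, and have finite $p$-th moment. Rosenthal's inequality then produces
\[ \esp|D|^p \leq C_p\Bigl(\esp[|Y_1|^4]^{p/2}\bigl(\sum_i |A_{ii}|^2\bigr)^{p/2} + \esp[|Y_1|^{2p}]\sum_i |A_{ii}|^p\Bigr), \]
and the elementary majorations $\sum_i |A_{ii}|^2 \leq \Tr(AA^*)$ (Cauchy-Schwarz) and $\sum_i |A_{ii}|^p \leq \Tr((AA^*)^{p/2})$ (a Schur-majorization argument applied to $A$, or equivalently the fact that the diagonal sequence is majorized by the singular values) finish this part.

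For the off-diagonal contribution, I would introduce the filtration $\mathcal{G}_k := \sigma(Y_1,\ldots,Y_k)$ and set, for $1 \leq k \leq n$,
\[ Z_k := \overline{Y_k}\alpha_k + Y_k\beta_k, \qquad \alpha_k := \sum_{j<k} Y_j A_{kj}, \quad \beta_k := \sum_{i<k} \overline{Y_i}\,A_{ik}. \]
Because $Y_k$ is independent of $\mathcal{G}_{k-1}$ with zero mean, $(Z_k)_{k\geq 1}$ is a martingale difference sequence and $O=\sum_{k=1}^n Z_k$. Burkholder's inequality then gives
\[ \esp|O|^p \leq K_p\,\esp\Bigl[\Bigl(\sum_{k=1}^n \esp_{k-1}|Z_k|^2\Bigr)^{p/2}\Bigr] + K_p \sum_{k=1}^n \esp|Z_k|^p. \]
The conditional variance $\esp_{k-1}|Z_k|^2$ is, up to constants depending only on $\esp[|Y_1|^2]$ and $|\esp[Y_1^2]|\leq 1$, a combination of $|\alpha_k|^2$ and $|\beta_k|^2$; summing over $k$ and using $\esp[|\alpha_k|^2]+\esp[|\beta_k|^2] = \sum_{j<k}(|A_{kj}|^2+|A_{jk}|^2)$ gives $\sum_k\esp_{k-1}|Z_k|^2 \leq C\,\esp[|Y_1|^4]\,\Tr(AA^*)$ after one more application of Rosenthal to absorb the fluctuations, yielding the first term in the statement. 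For the individual moments $\esp|Z_k|^p$, conditioning on $\mathcal{G}_{k-1}$ reduces the problem to bounding $\esp[|\alpha_k|^p]$ and $\esp[|\beta_k|^p]$, which are again treated by Rosenthal's inequality.

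The main obstacle lies in the last step: the sum $\sum_k\esp|Z_k|^p$ naturally produces expressions like $\sum_k\bigl(\sum_{j<k}|A_{kj}|^2\bigr)^{p/2}$ and $\sum_k\sum_{j<k}|A_{kj}|^p$, and one must show these are dominated by $\esp[|Y_1|^{2p}]\Tr((AA^*)^{p/2})$ rather than by the cruder $(\Tr(AA^*))^{p/2}$. I would handle this by applying a triangular-truncation / Schatten-norm argument: denoting by $L$ the strictly lower-triangular part of $A$, the double sum equals a sum of row-norms raised to the $p/2$, and Schatten monotonicity $\|L\|_{S^p}^p \leq \|A\|_{S^p}^p = \Tr((AA^*)^{p/2})$, combined with the inequality $\sum_i\|Le_i\|_2^p \leq \|L\|_{S^p}^p$ valid for $p\geq 2$, yields the required bound. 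An alternative, more self-contained route is an induction on $p$ that bootstraps the case $p=2$ (which is just Lemma \ref{quadratic forms}) through the Burkholder step, at the cost of a larger constant $K_p$.
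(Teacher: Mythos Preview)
The paper does not prove this lemma; it simply quotes it as Lemma~2.7 of \cite{BaiSil98}. Your sketch follows the same martingale architecture as Bai--Silverstein's original argument, so the overall plan is the right one. Two steps, however, do not close as written.

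First, the bracket term. After Burkholder you must control $\esp\bigl[(\sum_k \esp_{\leq k-1}|Z_k|^2)^{p/2}\bigr]$, and $\sum_k \esp_{\leq k-1}|Z_k|^2 \leq 2\sum_k(|\alpha_k|^2+|\beta_k|^2) = 2\,Y^*(L^*L+U^*U)Y$, where $L,U$ are the strictly lower- and upper-triangular parts of $A$. This is a random quadratic form in $Y$, not a sum of independent summands, so ``one more application of Rosenthal'' does not apply. What actually closes the loop is the induction on $p$ that you relegate to the last sentence: center $Y^*L^*LY$ around $\Tr(L^*L)\leq \Tr(AA^*)$ and invoke the lemma itself at exponent $p/2$ to control the fluctuation. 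This is not an optional alternative; it is the mechanism that makes the proof work, and Bai--Silverstein proceed exactly this way (inducting over dyadic ranges of $p$). Presenting it as a side remark leaves a genuine gap in the main line.

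Second, the Schatten step. The claimed inequality $\|L\|_{S^p}\leq \|A\|_{S^p}$ is false as stated: zeroing out entries does not decrease Schatten $p$-norms except at $p=2$. The triangular projection is bounded on $S^p$ for $1<p<\infty$, but only with a constant $c_p>1$ (a non-trivial result of Macaev/Gohberg--Krein type), and it is unbounded at $p=1,\infty$. One can absorb $c_p^p$ into $K_p$, so the argument is salvageable, but it should not be called ``Schatten monotonicity''. In fact, once you commit to the inductive scheme above, this detour through triangular truncation becomes unnecessary: the induction delivers the $\Tr((AA^*)^{p/2})$ contribution directly, without comparing $L$ to $A$ in Schatten norm.
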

By independence of the entries of $X_N$, 
Lemma \ref{lem_moment_quadratic_forms} may be applied to $Y=\sigma_N^{-1}C_k^{(k)}$, $A=\sigma_N^2R^{(k)}(z)^q$, $q\in\{1,2\}$ and $p\geq 2$. Under the additional assumption that the entries of $W_N$ are bounded by $\delta_N$, 
one gets, using \eqref{resolventbound}:
$$\esp\big[|C_k^{(k)*}R^{(k)}(z)^qC_k^{(k)}-\sigma_N^2\Tr(R^{(k)}(z)^q)|^{p}\big] \leq K_{p}|\Im z|^{-qp}\big((Nm_N)^{p/2}+\delta_N^{2p-4}Nm_N\big).$$
If $p=2(1+\varepsilon)$, one has the following improved bound:
\begin{equation}\label{eq_improved_bound}
\esp\big[|C_k^{(k)*}R^{(k)}(z)^qC_k^{(k)}-\sigma_N^2\Tr(R^{(k)}(z)^q)|^{2(1+\varepsilon)}\big] \leq K_{2(1+\varepsilon)}|\Im z|^{-2q(1+\varepsilon)} \big((Nm_N)^{1+\varepsilon}+C_4N^{-1-2\varepsilon}\big).
\end{equation}

\subsection{Variance bounds}\label{sec_variance_bound}

A key point is to obtain bounds on the variance of the trace $\Tr(R_N(z))$ of the resolvent $R_N(z)=(zI_N-X_N)^{-1}$ of $X_N$.
Recall that $\mathbb{E}_{\leq k}$ denotes the conditional expectation on the sigma-field $\sigma(W_{ij},1\leq i\leq j\leq k)$. 
The random variable $\Tr(R_N(z))$ being bounded, $(\mathbb{E}_{\leq k}[\Tr(R_N(z))])_{k\in \mathbb{N}}$ is a square integrable complex martingale, hence, by Lemma \ref{martingalevariance},
$$\Var[\Tr(R_N(z))]=\mathbb{E}\Big[\Big|\sum_{k=1}^N (\mathbb{E}_{\leq k}-\mathbb{E}_{\leq k-1})[\Tr(R_N(z))]\Big|^2\Big]=\sum_{k=1}^N \mathbb{E}\Big[\Big|(\mathbb{E}_{\leq k}-\mathbb{E}_{\leq k-1})[\Tr(R_N(z))]\Big|^2\Big].$$
By Proposition \ref{Schur},
\begin{equation}\label{eq_trR_trRk}
\Tr(R_N(z))-\Tr(R^{(k)}(z))=\frac{1+C_k^{(k)*}R^{(k)}(z)^2C_k^{(k)}}{z-W_{kk}-D_{kk}-C_k^{(k)*}R^{(k)}(z)C_k^{(k)}}.
\end{equation}
Hence, applying Lemma \ref{usefulbound} below, 
\begin{equation}\label{bounddifftrace}
|\Tr(R_N(z))-\Tr(R^{(k)}(z))|\leq \frac{1}{|\mathcal{I}z|}.
\end{equation}
\begin{lem}\label{usefulbound}
$$\bigg|\frac{1+C_k^{(k)*}R^{(k)}(z)^2C_k^{(k)}}{z-W_{kk}-D_{kk}-C_k^{(k)*}R^{(k)}(z)C_k^{(k)}}\bigg|\leq \frac{1}{|\mathcal{I}z|}.$$
\end{lem}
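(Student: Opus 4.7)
The plan is to recognize the left-hand side as a logarithmic derivative and then exploit the Nevanlinna--Herglotz structure of its antiderivative. Applying Proposition \ref{Schur} to $zI_N-X_N$, the relevant Schur complement is
\[ f(z) := z - W_{kk} - D_{kk} - C_k^{(k)*} R^{(k)}(z) C_k^{(k)}. \]
Differentiating via $\frac{d}{dz}R^{(k)}(z) = -R^{(k)}(z)^2$, one obtains $f'(z) = 1 + C_k^{(k)*} R^{(k)}(z)^2 C_k^{(k)}$, so the quantity to be bounded is exactly $f'(z)/f(z)$.

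Next, applying the spectral theorem to the $(N-1)\times(N-1)$ Hermitian matrix $X^{(k)}$ produces real eigenvalues $\nu_i$ and nonnegative coefficients $\beta_i := |\langle C_k^{(k)}, e_i\rangle|^2 \geq 0$ with
\[ f(z) = z - W_{kk} - D_{kk} - \sum_i \frac{\beta_i}{z-\nu_i}, \qquad f'(z) = 1 + \sum_i \frac{\beta_i}{(z-\nu_i)^2}. \]
All the parameters $W_{kk}, D_{kk}, \nu_i$ are real and $\beta_i\geq 0$, so $f$ is a Herglotz function of $z$. Assuming $\Im z > 0$ (the case $\Im z < 0$ follows by complex conjugation), a direct computation gives
\[ \Im f(z) = \Im z \cdot \Bigl(1 + \sum_i \frac{\beta_i}{|z-\nu_i|^2}\Bigr), \]
while the triangle inequality applied term-by-term yields
\[ |f'(z)| \leq 1 + \sum_i \frac{\beta_i}{|z-\nu_i|^2} = \frac{\Im f(z)}{\Im z}. \]

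The conclusion then follows by combining this with the elementary bound $|f(z)| \geq |\Im f(z)|$, valid for any complex number:
\[ \biggl|\frac{f'(z)}{f(z)}\biggr| \leq \frac{|f'(z)|}{|\Im f(z)|} \leq \frac{1}{|\Im z|}. \]
There is no real obstacle in this proof; the key conceptual observation is that $C_k^{(k)*} R^{(k)}(z) C_k^{(k)}$ is a Pick-type function of $z$ with \emph{negative} imaginary part, so that $f$ maps $\mathbb{C}^+$ into itself with $|\Im f| \geq \Im z$, and simultaneously its derivative is controlled term-by-term by the same sums that control $\Im f$.
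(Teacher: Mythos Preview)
Your proof is correct and is essentially the same argument as the paper's: both bound the numerator by $1+C_k^{(k)*}R^{(k)}(z)^*R^{(k)}(z)C_k^{(k)}$ (your $1+\sum_i\beta_i/|z-\nu_i|^2$) and the denominator from below by $|\mathcal{I}z|$ times the same quantity, then cancel. The paper carries this out via the abstract identity \eqref{im} without diagonalizing, whereas you spectrally decompose $X^{(k)}$ and add the pleasant observation that the ratio is $f'/f$ for a Herglotz $f$; the underlying estimates coincide.
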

\begin{proof}
Using \eqref{im} with $\psi(A)=C_k^{(k)*}AC_k^{(k)}$, the proof goes as follows:
\begin{align*}
\bigg|\frac{1+C_k^{(k)*}R^{(k)}(z)^2C_k^{(k)}}{z-W_{kk}-D_{kk}-C_k^{(k)*}R^{(k)}(z)C_k^{(k)}}\bigg|&\leq \frac{|1+C_k^{(k)*}R^{(k)}(z)^2C_k^{(k)}|}{\big|\mathcal{I}(z-W_{kk}-D_{kk}-C_k^{(k)*}R^{(k)}(z)C_k^{(k)})\big|}\\
														         &\leq \frac{1+|C_k^{(k)*}R^{(k)}(z)^2C_k^{(k)}|}{\big|\mathcal{I}z-\mathcal{I}C_k^{(k)*}R^{(k)}(z)C_k^{(k)}\big|}\\
														         &\leq \frac{1+C_k^{(k)*}R^{(k)}(z)^{*}R^{(k)}(z)C_k^{(k)}}{\big|\mathcal{I}z+\mathcal{I}zC_k^{(k)*}R^{(k)}(z)^{*}R^{(k)}(z)C_k^{(k)}\big|}\\
														         &\leq \frac{1}{|\mathcal{I}z|}.
\end{align*}
\end{proof}
Since $R^{(k)}(z)$ does not involve the $k$-th row/column of $W_N$, $(\mathbb{E}_{\leq k}-\mathbb{E}_{\leq k-1})[\Tr(R^{(k)}(z))]=0$ and then,
$$\Var[\Tr(R_N(z))]=\sum_{k=1}^N \mathbb{E}\Big[\big|(\mathbb{E}_{\leq k}-\mathbb{E}_{\leq k-1})[\Tr(R_N(z))-\Tr(R^{(k)}(z))]\big|^2\Big].$$
Plugging then \eqref{bounddifftrace} in the right-hand side gives a first bound on the variance:
\begin{equation}\label{variancebound1}
\Var[\Tr(R_N(z))]\leq \frac{4N}{|\mathcal{I}z|^2}.
\end{equation}
The same bound holds for $\Var[\Tr(R^{(k)}(z))]$.
The rest of this subsection is devoted to the following more elaborate bound:

\begin{prop}\label{variancebound2}
For $z\in \mathbb{C}\setminus \mathbb{R}$ and $0\leq \delta \leqslant1$, 
$$\Var[\Tr(R_N(z))]\leq 2\bigg(\frac{s_N^2}{|\mathcal{I}z|^{3-\delta}}+2\frac{m_N(N\sigma_N^2)^{\delta}}{\sigma_N^2|\mathcal{I}z|^{3+\delta}}\bigg)\sum_{k=1}^N\mathbb{E}[|R_N(z)_{kk}|^{1+\delta}].$$
\end{prop}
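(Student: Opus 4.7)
The plan is to bound each summand in the martingale-variance representation established just before the statement, namely $\Var[\Tr R_N(z)] = \sum_{k=1}^N \mathbb{E}|(\mathbb{E}_{\leq k}-\mathbb{E}_{\leq k-1})\Delta_k|^2$ where $\Delta_k = \Tr R_N(z) - \Tr R^{(k)}(z) = A_k/\alpha_k$ with $A_k = 1 + C_k^{(k)*}R^{(k)}(z)^2 C_k^{(k)}$, $\alpha_k = z - W_{kk} - D_{kk} - C_k^{(k)*}R^{(k)}(z)C_k^{(k)}$, and $R_N(z)_{kk} = 1/\alpha_k$.

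The first move is to exploit that the projection difference $\mathbb{E}_{\leq k}-\mathbb{E}_{\leq k-1}$ annihilates any quantity not depending on the ``new'' variables $\{W_{kk}, W_{1k}, \ldots, W_{k-1,k}\}$ in $\mathcal{F}_k\setminus\mathcal{F}_{k-1}$; together with the $L^2$-contraction $\|\mathbb{E}_{\leq k}X-\mathbb{E}_{\leq k-1}X\|_2^2 = \|\mathbb{E}_{\leq k}X\|_2^2 - \|\mathbb{E}_{\leq k-1}X\|_2^2 \leq \|X\|_2^2$, this gives
\[
\mathbb{E}|(\mathbb{E}_{\leq k}-\mathbb{E}_{\leq k-1})\Delta_k|^2 \leq \mathbb{E}|\Delta_k - Y|^2
\]
for any $Y$ independent of those variables. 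I will take $Y$ to be $\Delta_k$ with $W_{kk}$ and the first $k-1$ entries of $C_k^{(k)}$ set to zero, and split $\Delta_k - Y$ into a $W_{kk}$-piece and a column-piece via $|a+b|^2 \leq 2|a|^2 + 2|b|^2$ (producing the overall factor $2$).

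For the $W_{kk}$-piece, setting $\Delta_k^{(1)} = \Delta_k|_{W_{kk}=0}$ and using $\alpha_k|_{W_{kk}=0} = \alpha_k+W_{kk}$, the resolvent identity yields the closed form $\Delta_k - \Delta_k^{(1)} = W_{kk}\, R_N^{(1)}(z)_{kk}\,\Delta_k$, where $R_N^{(1)}(z)_{kk} := 1/(\alpha_k+W_{kk})$ is independent of $W_{kk}$. Combining $|\Delta_k| \leq |\Im z|^{-1}$ from Lemma \ref{usefulbound} with the interpolation $|R_N^{(1)}(z)_{kk}|^2 \leq |R_N^{(1)}(z)_{kk}|^{1+\delta}|\Im z|^{-(1-\delta)}$, then taking expectation using independence, gives $s_N^2\, \mathbb{E}|R_N^{(1)}(z)_{kk}|^{1+\delta}/|\Im z|^{3-\delta}$; a rank-one Schur identity $R_N^{(1)}(z)_{kk} = R_N(z)_{kk}/(1+W_{kk}R_N(z)_{kk})$ then substitutes $|R_N(z)_{kk}|^{1+\delta}$ at the cost of absorbing an absolute constant.

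For the column-piece, $\Delta_k^{(1)} - Y$ reduces algebraically to two partial quadratic-form fluctuations indexed by $i < k$, one involving $R^{(k)}(z)$ and one involving $R^{(k)}(z)^2$; a further $|a+b|^2 \leq 2|a|^2 + 2|b|^2$ produces the inner factor $2$. Lemma \ref{quadratic forms} applied with $h = k-1$ bounds each conditional second moment by quantities of the form $m_N\Tr(R^{(k)*} R^{(k)})$ and $m_N\Tr(R^{(k)2*} R^{(k)2})$; the crude bound $\Tr(R^{(k)*} R^{(k)}) \leq (N-1)|\Im z|^{-2}$ combined with an interpolation producing the factor $(N\sigma_N^2)^\delta$, together with the same Lemma \ref{usefulbound}-type conversion into $|R_N(z)_{kk}|^{1+\delta}$, yields the off-diagonal contribution $2m_N(N\sigma_N^2)^\delta/(\sigma_N^2|\Im z|^{3+\delta}) \cdot \mathbb{E}|R_N(z)_{kk}|^{1+\delta}$. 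The main obstacle will be the exponent bookkeeping: tracking how each power of $|\Im z|$ and each $\delta$-interpolation balance out so that the stated denominators $|\Im z|^{3\pm\delta}$ and constants are reproduced exactly, and in particular managing the conversion $R_N^{(1)}(z)_{kk} \rightsquigarrow R_N(z)_{kk}$ without degrading the $(1+\delta)$-th moment control.
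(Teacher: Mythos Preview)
Your proposal diverges from the paper's argument at the centering step, and the divergence creates a genuine gap in the column piece.

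The paper does \emph{not} subtract the quantity obtained by zeroing out the new entries. Instead it subtracts the $\mathbb{E}_k$--conditional expectation of $\Delta_k$: writing
\[
\Tr R_N(z)=\Tr R^{(k)}(z)+\frac{1+\sigma_N^2\Tr(R^{(k)}(z)^2)}{z-D_{kk}-\sigma_N^2\Tr(R^{(k)}(z))}+\Psi_k,
\]
one has $(\mathbb{E}_{\leq k}-\mathbb{E}_{\leq k-1})[\Tr R_N(z)]=(\mathbb{E}_{\leq k}-\mathbb{E}_{\leq k-1})[\Psi_k]$, and then bounds $\mathbb{E}_k[|\Psi_k|^2]$ directly. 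The point of this centering is that the new denominator $z-D_{kk}-\sigma_N^2\Tr R^{(k)}(z)$ satisfies, via \eqref{im},
\[
\Big|\frac{1}{z-D_{kk}-\sigma_N^2\Tr R^{(k)}(z)}\Big|\le \min\Big(\frac{1}{|\Im z|},\ \frac{1}{\sigma_N^2|\Im z|\,\Tr(R^{(k)*}R^{(k)})}\Big).
\]
One then writes $|{\rm denom}|^{-2}=|{\rm denom}|^{-(1-\delta)}|{\rm denom}|^{-(1+\delta)}$, uses the second bound above on the $(1-\delta)$--power so that the factor $m_N\Tr(R^{(k)*}R^{(k)})$ coming from Lemma~\ref{quadratic forms} is partially \emph{cancelled} (leaving $(\Tr(R^{(k)*}R^{(k)}))^{\delta}\le (N|\Im z|^{-2})^{\delta}$), and applies Jensen to the $(1+\delta)$--power via $z-D_{kk}-\sigma_N^2\Tr R^{(k)}=\mathbb{E}_k[\alpha_k]$, giving $|\mathbb{E}_k[\alpha_k]|^{-(1+\delta)}\le \mathbb{E}_k[|R_N(z)_{kk}|^{1+\delta}]$. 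This is exactly how the constants $m_N(N\sigma_N^2)^{\delta}/\sigma_N^2$ and $|\Im z|^{-(3+\delta)}$ arise.

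Your route misses this cancellation. With your $Y$, the column piece produces numerators whose second moments are controlled by $m_N\Tr(R^{(k)*}R^{(k)})$ (so far fine), but the denominators are $\alpha_k^{(1)}=z-D_{kk}-C^*R^{(k)}C$ and $\alpha_k'$, whose imaginary parts involve $C^*R^{(k)*}R^{(k)}C$ rather than $\sigma_N^2\Tr(R^{(k)*}R^{(k)})$. There is then no mechanism to cancel $\Tr(R^{(k)*}R^{(k)})$ against the denominator, and using only the ``crude bound'' $\Tr(R^{(k)*}R^{(k)})\le N|\Im z|^{-2}$ together with $|\alpha_k^{(1)}|^{-2}\le |\Im z|^{-(1-\delta)}|R_N^{(1)}(z)_{kk}|^{1+\delta}$ gives at best $m_N N|\Im z|^{-(3-\delta)}\,\mathbb{E}|R_N^{(1)}(z)_{kk}|^{1+\delta}$, which is off from the target $m_N(N\sigma_N^2)^{\delta}\sigma_N^{-2}|\Im z|^{-(3+\delta)}$ by the factor $(N\sigma_N^2)^{1-\delta}|\Im z|^{2\delta}$; this does not vanish and is not $\le 1$. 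In short, the ``interpolation producing the factor $(N\sigma_N^2)^{\delta}$'' that you invoke cannot be carried out without the paper's denominator. There is also a secondary issue: $C^*R^{(k)}C-\tilde C^*R^{(k)}\tilde C$ has nonzero conditional mean $\sigma_N^2\sum_{i<k}R^{(k)}_{ii}$, so bounding $\|\Delta_k-Y\|_2^2$ (rather than the projected quantity) picks up an extra term of order $\sigma_N^4 k^2|\Im z|^{-2}$ which, summed over $k$, blows up like $N$.

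Your treatment of the $W_{kk}$ piece is essentially correct (and in fact no constant needs to be absorbed: since $\alpha_k+W_{kk}=\mathbb{E}_{W_{kk}}[\alpha_k]$, Jensen gives directly $\mathbb{E}|R_N^{(1)}(z)_{kk}|^{1+\delta}\le \mathbb{E}|R_N(z)_{kk}|^{1+\delta}$). But for the column piece you should replace your $Y$ by the $\mathbb{E}_k$--centering and exploit the resulting denominator bound as above; this is the missing idea.
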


In particular, for $\delta=0$, the bound (also valid for $\Var[\Tr(R^{(k)}(z))]$) becomes
$$\Var[\Tr(R_N(z))]\leq 2|\mathcal{I}z|^{-4}N(s_N^2+2\sigma_N^{-2}m_N)=O(1).$$

\begin{proof}[Proof of Proposition \ref{variancebound2}]
In \eqref{eq_trR_trRk}, decompose 
\begin{align*}
\frac{1}{z-W_{kk}-D_{kk}-C_k^{(k)*}R^{(k)}(z)C_k^{(k)}} & = \frac{1}{z-D_{kk}-\sigma_N^2\Tr(R^{(k)}(z))}\\
									& + \frac{W_{kk}+C_k^{(k)*}R^{(k)}(z)C_k^{(k)}-\sigma_N^2\Tr(R^{(k)}(z))}{(z-W_{kk}-D_{kk}-C_k^{(k)*}R^{(k)}(z)C_k^{(k)})(z-D_{kk}-\sigma_N^2\Tr(R^{(k)}(z)))}
\end{align*}
and 
\[1+C_k^{(k)*}R^{(k)}(z)^2C_k^{(k)}=1+\sigma_N^2\Tr(R^{(k)}(z)^2)+\Big(C_k^{(k)*}R^{(k)}(z)^2C_k^{(k)}-\sigma_N^2\Tr(R^{(k)}(z)^2)\Big)\]
to get :
\begin{equation}\label{psi}
\Tr(R_N(z))=\Tr(R^{(k)}(z))+\frac{1+\sigma_N^2\Tr(R^{(k)}(z)^2)}{z-D_{kk}-\sigma_N^2\Tr(R^{(k)}(z))}+\Psi_k,
\end{equation}
where $$\Psi_k=\frac{\big(1+C_k^{(k)*}R^{(k)}(z)^2C_k^{(k)}\big)\big(W_{kk}+C_k^{(k)*}R^{(k)}(z)C_k^{(k)}-\sigma_N^2\Tr(R^{(k)}(z))\big)}{\big(z-W_{kk}-D_{kk}-C_k^{(k)*}R^{(k)}(z)C_k^{(k)}\big)\big(z-D_{kk}-\sigma_N^2\Tr(R^{(k)}(z))\big)}$$
$$+\frac{C_k^{(k)*}R^{(k)}(z)^2C_k^{(k)}-\sigma_N^2\Tr(R^{(k)}(z)^2)}{z-D_{kk}-\sigma_N^2\Tr(R^{(k)}(z))}.$$
Note that now the first two terms of the right-hand side of \eqref{psi} are not involving the $k$-th row/column of $W_N$, hence
\begin{equation}\label{vanish}
(\mathbb{E}_{\leq k}-\mathbb{E}_{\leq k-1})\Big[\Tr(R^{(k)}(z))+\frac{1+\sigma_N^2\Tr(R^{(k)}(z)^2)}{z-D_{kk}-\sigma_N^2\Tr(R^{(k)}(z))}\Big]=0.
\end{equation}
Using Jensen inequality (with respect to $\mathbb{E}_{\leq k}$) after writing $\mathbb{E}_{\leq k-1}=\mathbb{E}_{\leq k}\mathbb{E}_{k}$, 
$$\mathbb{E}[|(\mathbb{E}_{\leq k}-\mathbb{E}_{\leq k-1})[\Tr(R_N(z))]|^2]\leq \mathbb{E}[|\Psi_k-\mathbb{E}_k[\Psi_k]|^2]=\mathbb{E}[\mathbb{E}_k[|\Psi_k-\mathbb{E}_k[\Psi_k]|^2]]\leq \mathbb{E}[\mathbb{E}_k[|\Psi_k|^2]].$$
We will need the following bound:
\begin{align*}
\Big|\frac{1}{z-D_{kk}-\sigma_N^2\Tr(R^{(k)}(z))}\Big|&\leq \frac{1}{\big|\mathcal{I}(z-D_{kk}-\sigma_N^2\Tr(R^{(k)}(z)))\big|}\\
								  &\leq \frac{1}{\big|\mathcal{I}z-\sigma_N^2\mathcal{I}\Tr(R^{(k)}(z)))\big|}\\
								 &\leq \frac{1}{|\mathcal{I}z|\big(1+\sigma_N^2\Tr(R^{(k)}(z)^{*}R^{(k)}(z))\big)}\\
								  &\leq \min\Big(\frac{1}{|\mathcal{I}z|},\frac{1}{\sigma_N^2|\mathcal{I}z|\Tr(R^{(k)}(z)^{*}R^{(k)}(z))}\Big),
\end{align*}
where we have used \eqref{im} with $n=N-1$ and $\psi=\Tr$.
Deduce from Lemma \ref{quadratic forms} that
\begin{align*}
\mathbb{E}_k\big[\big|W_{kk}+C_k^{(k)*}R^{(k)}(z)C_k^{(k)}-\sigma_N^2\Tr(R^{(k)}(z))\big|^2\big] & = \mathbb{E}_k[W_{kk}^2]\\
			& \quad + \mathbb{E}_k\big[|C_k^{(k)*}R^{(k)}(z)C_k^{(k)}-\sigma_N^2\Tr(R^{(k)}(z))|^2\big]\\
			& \leq s_N^2+m_N\Tr(R^{(k)}(z)^*R^{(k)}(z));
\end{align*}
\begin{align*}
\mathbb{E}_k\Big[\big|C_k^{(k)*}R^{(k)}(z)^2C_k^{(k)}-\sigma_N^2\Tr(R^{(k)}(z)^2)\big|^2\Big]&\leq m_N\Tr(R^{(k)}(z)^{2*}R^{(k)}(z)^{2})\\
													          &\leq m_N|\mathcal{I}z|^{-2}\Tr(R^{(k)}(z)^*R^{(k)}(z)).
\end{align*}
Hence, for arbitrary $0\leq \delta \leqslant1$,
\begin{align*}
 \mathbb{E}_k\Big[\Big|&\frac{W_{kk}+C_k^{(k)*}R^{(k)}(z)C_k^{(k)}-\sigma_N^2\Tr(R^{(k)}(z))}{z-D_{kk}-\sigma_N^2\Tr(R^{(k)}(z))}\Big|^2\Big]\\
&\leq \frac{s_N^2+m_N\Tr(R^{(k)}(z)^*R^{(k)}(z))}{|z-D_{kk}-\sigma_N^2\Tr(R^{(k)}(z))|^{1-\delta}\big|\mathbb{E}_k[z-W_{kk}-D_{kk}-C_k^{(k)*}R^{(k)}(z)C_k^{(k)}]\big|^{1+\delta}}\\
		       &\leq \big(\frac{s_N^2}{|\mathcal{I}z|^{1-\delta}}+\frac{m_N(N|\mathcal{I}z|^{-2})^{\delta}}{(\sigma_N^2|\mathcal{I}z|)^{1-\delta}}\big)\mathbb{E}_k[|R_N(z)_{kk}|^{1+\delta}],
\end{align*}
by Jensen inequality applied to function $x \mapsto |x|^{-1-\delta}$. Similarly, 
$$\mathbb{E}_k\Big[\Big|\frac{C_k^{(k)*}R^{(k)}(z)^2C_k^{(k)}-\sigma_N^2\Tr(R^{(k)}(z)^2)}{z-D_{kk}-\sigma_N^2\Tr(R^{(k)}(z))}\Big|^2\Big]\leq \frac{m_N|\mathcal{I}z|^{-2}(N|\mathcal{I}z|^{-2})^{\delta}}{(\sigma_N^2|\mathcal{I}z|)^{1-\delta}}\mathbb{E}_k[|R_N(z)_{kk}|^{1+\delta}].$$
It follows that 
$$\mathbb{E}_k[|\Psi_k|^2]\leq 2\bigg(\frac{s_N^2}{|\mathcal{I}z|^{3-\delta}}+2\frac{m_N(N\sigma_N^2)^{\delta}}{\sigma_N^2|\mathcal{I}z|^{3+\delta}}\bigg)\mathbb{E}_k[|R_N(z)_{kk}|^{1+\delta}],$$
and we are done.
\end{proof}

\subsection{Concentration of diagonal entries of the resolvent}

For $z\in \mathbb{C}\setminus \mathbb{R}$, 
\begin{align*}
R_N(z)_{kk}& = \tilde{R}(z)_{kk}+\tilde{R}(z)_{kk}R_N(z)_{kk}(W_{kk}+C_k^{(k)*}R^{(k)}(z)C_k^{(k)}-\sigma_N^2\mathbb{E}[\Tr(R_N(z))]),
\end{align*}
where \[\tilde{R}(z):=\big((z-\sigma_N^2\mathbb{E}[\Tr(R_N(z))])I_N-D_N\big)^{-1}.\]
This equality is obtained by applying, with $n=1$, the following Lemma, which is easily proved by induction from Proposition \ref{Schur}.

\begin{lem}\label{taylor}
\begin{eqnarray*}
R_N(z)_{kk}=&\sum_{i=1}^n\tilde{R}(z)_{kk}^i\Big(W_{kk}+C_k^{(k)*}R^{(k)}(z)C_k^{(k)}-\sigma_N^2\mathbb{E}[\Tr(R_N(z))]\Big)^{i-1}\\
		 &+\tilde{R}(z)_{kk}^nR(z)_{kk}\Big(W_{kk}+C_k^{(k)*}R^{(k)}(z)C_k^{(k)}-\sigma_N^2\mathbb{E}[\Tr(R_N(z))]\Big)^{n}.
\end{eqnarray*}
\end{lem}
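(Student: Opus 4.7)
The plan is to use Schur's inversion formula (Proposition \ref{Schur}) to extract an algebraic expression for $R_N(z)_{kk}$, then unfold it via a geometric-series / resolvent-type expansion whose recursive structure immediately yields the claimed identity by induction on $n$.

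First, I would introduce the abbreviations $a := z - D_{kk} - \sigma_N^2\esp[\Tr(R_N(z))]$ and $\epsilon := W_{kk} + C_k^{(k)*}R^{(k)}(z)C_k^{(k)} - \sigma_N^2\esp[\Tr(R_N(z))]$, so that Schur's formula gives $R_N(z)_{kk} = (a-\epsilon)^{-1}$ while $\tilde R(z)_{kk} = a^{-1}$. Note that $a - \epsilon = z - W_{kk} - D_{kk} - C_k^{(k)*}R^{(k)}(z)C_k^{(k)}$, which is non-zero for $z\in \C\setminus \R$ (by Lemma \ref{usefulbound} or directly from the fact that $zI_N - X_N$ is invertible), and $a$ is non-zero because $\Im(\sigma_N^2\esp[\Tr(R_N(z))])$ has the opposite sign of $\Im z$. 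The base case $n=1$ is then the elementary identity
\[
\frac{1}{a-\epsilon} \;=\; \frac{1}{a} \;+\; \frac{1}{a}\cdot \epsilon \cdot \frac{1}{a-\epsilon},
\]
i.e. $R_N(z)_{kk} = \tilde R(z)_{kk} + \tilde R(z)_{kk}\,\epsilon\,R_N(z)_{kk}$, which matches the statement.

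For the inductive step, assuming the identity holds at level $n$, I would substitute the base case identity for $R_N(z)_{kk}$ inside the remainder term $\tilde R(z)_{kk}^n R_N(z)_{kk}\epsilon^n$:
\[
\tilde R(z)_{kk}^n R_N(z)_{kk}\epsilon^n \;=\; \tilde R(z)_{kk}^{n+1}\epsilon^n \;+\; \tilde R(z)_{kk}^{n+1}\,\epsilon\,R_N(z)_{kk}\,\epsilon^n.
\]
Since $\epsilon$ is a scalar and commutes with itself, the second summand equals $\tilde R(z)_{kk}^{n+1} R_N(z)_{kk}\epsilon^{n+1}$, and the first extends the sum to the index $i = n+1$, giving the identity at level $n+1$.

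There is really no substantial obstacle: the only delicate point is bookkeeping, namely checking that all denominators are non-zero and that the scalar nature of $\tilde R(z)_{kk}$ and $\epsilon$ makes the multiplications commute freely, so that $\epsilon^{i-1}$ can be placed on either side of $\tilde R(z)_{kk}^i$ as needed. Once those trivialities are dispatched, the induction is a one-line manipulation.
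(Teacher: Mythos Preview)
Your proof is correct and follows essentially the same approach the paper indicates: the paper merely states that the lemma ``is easily proved by induction from Proposition \ref{Schur}'', and your argument carries out precisely this induction, using the Schur formula to write $R_N(z)_{kk}=(a-\epsilon)^{-1}$ and iterating the elementary identity $(a-\epsilon)^{-1}=a^{-1}+a^{-1}\epsilon(a-\epsilon)^{-1}$.
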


Observe that
$$\mathbb{E}\big[\big|W_{kk}+C_k^{(k)*}R^{(k)}(z)C_k^{(k)}-\sigma_N^2\mathbb{E}[\Tr(R_N(z))]\big|^2\big]=$$
$$\mathbb{E}[W_{kk}^2]+\mathbb{E}\big[\big|C_k^{(k)*}R^{(k)}(z)C_k^{(k)}-\sigma_N^2\Tr(R^{(k)}(z))\big|^2\big]+\sigma_N^4\big(\Var[\Tr(R^{(k)}(z))]+\big|\mathbb{E}[\Tr(R^{(k)}(z))-\Tr(R_N(z))]\big|^2\big).$$
By Lemma \ref{quadratic forms}, \eqref{variancebound1} and \eqref{bounddifftrace},
\begin{equation}\label{boundnum}
\mathbb{E}[|W_{kk}+C_k^{(k)*}R^{(k)}(z)C_k^{(k)}-\sigma_N^2\mathbb{E}[\Tr(R_N(z))]|^2]\leq s_N^2+Nm_N|\mathcal{I}z|^{-2}+ (3N+1)\sigma_N^4|\mathcal{I}z|^{-2}=O(N^{-1}),
\end{equation}
and then,
\[\mathbb{E}[|R_N(z)_{kk}-\tilde{R}(z)_{kk}|^2]\leq |\mathcal{I}z|^{-4}(s_N^2+Nm_N|\mathcal{I}z|^{-2}+ (3N+1)\sigma_N^4|\mathcal{I}z|^{-2})=O(N^{-1}),\]
uniformly in $k$. Similarly, uniformly in $1\leq i\neq k\leq N$,
$$\mathbb{E}[|R^{(k)}(z)_{ii}-\widetilde{R^{(k)}}(z)_{ii}|^2]=O(N^{-1}),$$
where \[\widetilde{R^{(k)}}(z):=\Big(\big(z-\sigma_N^2\esp[\Tr R^{(k)}(z)]\big)I_{N-1}-D_N^{(k)}\Big)^{-1},\]
and $D_N^{(k)}$ is the $(N-1)\times(N-1)$ obtained from $D_N$ by deleting the $k$-th row/column (for convenience, rows and columns of $D_N^{(k)}$ are indexed by $\{1,\ldots ,N\}\setminus\{k\}$).
Notice that, by \eqref{bounddifftrace},
$$|\widetilde{R^{(k)}}(z)_{ii}-\tilde{R}(z)_{ii}|\leq \sigma_N^2|\mathcal{I}z|^{-2}\mathbb{E}[|\Tr R^{(k)}(z)-\Tr R_N(z)|]\leq \sigma_N^2|\mathcal{I}z|^{-3}.$$
Finally,
\begin{equation}\label{eq_approximation_Rk_tildeR}
\mathbb{E}[|R^{(k)}(z)_{ii}-\tilde{R}(z)_{ii}|^2]=O(N^{-1}),
\end{equation}
uniformly in $1\leq i\neq k\leq N$.

\subsection{Approximate subordination relations}

From the equation
\begin{eqnarray*}
\mathbb{E}[G_{\mu_N}(z)]&=&\mathbb{E}\Big[\frac{1}{N}\Tr(R_N(z))\Big]=\frac{1}{N}\sum_{k=1}^N\mathbb{E}[R_N(z)_{kk}]\\
				 &=&\frac{1}{N}\sum_{k=1}^N\tilde{R}(z)_{kk}+O(N^{-1/2})=G_{\nu_N}\big(z-N\sigma_N^2\mathbb{E}[G_{\mu_N}(z)]\big)+O(N^{-1/2}),
\end{eqnarray*}
one deduces that the normal sequence $(\mathbb{E}[G_{\mu_N}])_{N\geq 1}$ of analytic maps on $\mathbb{C}\setminus\mathbb{R}$ has for unique accumulation point the Stieltjes transform $G_{\rho}$ of the unique Borel probability measure $\rho $ on $\mathbb{R}$ satisfying Pastur equation \eqref{Pastur}.
Hence $(\mathbb{E}[G_{\mu_N}])_{N\geq 1}$ converges uniformly on compact sets of $\mathbb{C}\setminus\mathbb{R}$ to $G_{\rho}$. Using the variance bound \eqref{variancebound1}, one deduces weak convergence (in probability) of $\mu_N$ towards $\rho$, or equivalently convergence (in probability) of linear spectral statistics $\int_{\mathbb{R}}\varphi(x)\mu_N(dx)$ towards $\int_{\mathbb{R}}\varphi(x)\rho(dx)$ for bounded continuous $\varphi$. This completes a proof of Pastur's Theorem. 

The relations 
$$\mathbb{E}[G_{\mu_N}(z)]=G_{\nu_N}\big(z-N\sigma_N^2\mathbb{E}[G_{\mu_N}(z)]\big)+O(N^{-1/2}),\quad z\in \mathbb{C}\setminus\mathbb{R},$$
$$\mathbb{E}[R_N(z)_{kk}]=((z-N\sigma_N^2\mathbb{E}[G_{\mu_N}(z)])I_N-D_N)^{-1}_{kk}+O(N^{-1/2}),\quad z\in \mathbb{C}\setminus\mathbb{R},$$
may be interpreted as approximate subordination relations for the sum of $W_N$ and $D_N$ at the level of scalars and of operators, respectively. The formula
$$\tilde{\omega}_N(z):=z-N\sigma_N^2\mathbb{E}[G_{\mu_N}(z)]=z-\sigma_N^2\mathbb{E}[\Tr(R_N(z))],\quad z\in \mathbb{C}\setminus\mathbb{R},$$
defines an approximate subordination function $\tilde{\omega}_N : \mathbb{C}\setminus \mathbb{R} \to \mathbb{C}$ 
inducing an analytic self-map of $\mathbb{C}^+$ such that $(iy)^{-1}\tilde{\omega}_N(iy) \longrightarrow 1$ when $y\longrightarrow +\infty$. It follows from Pastur's Theorem that the sequence of analytic maps $(\tilde{\omega}_N)_{N\geq 1}$ converges uniformly on compact sets of $\mathbb{C}\setminus \mathbb{R}$ to $\omega$.

\section{Analysis of bias} \label{bias}

This Section is devoted to the analysis of the bias of the linear spectral statistic $\int_{\mathbb{R}} \varphi(x)\mu_N(dx)$ with respect to its deterministic equivalent $\int_{\mathbb{R}} \varphi(x)\rho_N(dx)$. More precisely, to prove that 
$$b_N(\varphi ):=N\Big(\mathbb{E}[\int_{\mathbb{R}} \varphi(x)\mu_N(dx)]-\int_{\mathbb{R}}\varphi(x)\rho_N(dx)\Big)$$
converges to some limit $b(\varphi)$, we first establish this convergence for 
$$\varphi \in \mathcal{L}_1:=\text{span}\{\varphi_z:x\mapsto (z-x)^{-1}; z\in \C\setminus \R\}$$
and then extend this convergence to more general test functions $\varphi$. To this last purpose, one will need a bound on $b_N(\varphi )$, that we deduce from a bound on $\beta_N(z):=b_N(\varphi _z)$.

\subsection{Bound on the bias of Stieltjes transform}

In this subsection, we derive the following bound on $\beta_N(z)$, which will be true without any boundedness assumption on the entries of $W_N$:

\begin{prop}\label{boundb}
There is a polynomial $P_N$ of degree $5$ with bounded coefficients such that, for $z \in \C\setminus \R$, 
$$|\beta_N(z)| \leqslant N^{-1}P_N(|\mathcal{I}z|^{-1})\sum_{k=1}^N|\tilde{R}(z)_{kk}|^2.$$
\end{prop}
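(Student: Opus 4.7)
The plan is to establish a self-consistent linear equation for $\beta_N(z)$ by expanding the diagonal resolvent entries around a deterministic reference and then inverting the resulting multiplier. First I would apply Lemma~\ref{taylor} with $n=2$ to write, for each $k$,
\[R_N(z)_{kk} = \tilde{R}(z)_{kk} + \tilde{R}(z)_{kk}^2\, \eta_k(z) + \tilde{R}(z)_{kk}^2\, R_N(z)_{kk}\, \eta_k(z)^2,\]
where $\eta_k(z) := W_{kk} + C_k^{(k)*} R^{(k)}(z) C_k^{(k)} - \sigma_N^2 \mathbb{E}[\Tr R_N(z)]$. Taking expectation, summing over $k$, and using the identity $\sum_k \tilde{R}(z)_{kk} = N\, G_{\nu_N}(\tilde{\omega}_N(z))$, this produces
\[N\bigl(\mathbb{E}[G_{\mu_N}(z)] - G_{\nu_N}(\tilde{\omega}_N(z))\bigr) = \sum_k \tilde{R}(z)_{kk}^2\, \mathbb{E}[\eta_k(z)] + \sum_k \tilde{R}(z)_{kk}^2\, \mathbb{E}[R_N(z)_{kk}\, \eta_k(z)^2].\]

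I would then estimate the two sums on the right. For the first, observe that $\mathbb{E}[\eta_k(z)] = -\sigma_N^2\, \mathbb{E}[\Tr R_N(z) - \Tr R^{(k)}(z)]$, which by Lemma~\ref{usefulbound} satisfies $|\mathbb{E}[\eta_k(z)]| \leq \sigma_N^2 / |\mathcal{I}z|$. For the second, combine $|R_N(z)_{kk}| \leq |\mathcal{I}z|^{-1}$ with the second-moment bound \eqref{boundnum}, giving $\mathbb{E}[|\eta_k(z)|^2] = O(N^{-1}(1 + |\mathcal{I}z|^{-2}))$. Together these estimates yield
\[\bigl|N\mathbb{E}[G_{\mu_N}(z)] - N G_{\nu_N}(\tilde{\omega}_N(z))\bigr| \leq \frac{C}{N}\bigl(|\mathcal{I}z|^{-1} + |\mathcal{I}z|^{-3}\bigr) \sum_{k=1}^N |\tilde{R}(z)_{kk}|^2.\]

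To transform this into a bound on $\beta_N(z)$, I would use the identity $\omega_N(z) - \tilde{\omega}_N(z) = \sigma_N^2 \beta_N(z)$ together with the partial-fraction computation
\[G_{\nu_N}(\tilde{\omega}_N(z)) - G_{\rho_N}(z) = \frac{\sigma_N^2\, \beta_N(z)}{N} \sum_{k=1}^N \frac{\tilde{R}(z)_{kk}}{\omega_N(z) - D_{kk}}.\]
Combining the two displays gives the self-consistent equation
\[\beta_N(z)\Bigl(1 - \sigma_N^2 \sum_{k=1}^N \frac{\tilde{R}(z)_{kk}}{\omega_N(z) - D_{kk}}\Bigr) = E_N(z),\]
with $E_N(z)$ controlled as above.

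The hard part is then a polynomial lower bound on the modulus of the prefactor $1 - \sigma_N^2 \sum_k \tilde{R}(z)_{kk}/(\omega_N(z)-D_{kk})$: one needs its inverse to be bounded by a polynomial of degree $2$ in $|\mathcal{I}z|^{-1}$, so that together with the degree-$3$ bound on $E_N(z)$ one reaches $\deg P_N = 5$. I would attack this by noting that, up to corrections of order $\sigma_N^2 |\beta_N|$ (and thus negligible on compacts), this prefactor equals $1 + N\sigma_N^2 G_{\nu_N}'(\omega_N(z)) = G_{\nu_N}'(\omega_N(z))/G_{\rho_N}'(z)$, the second equality coming from differentiating Pastur's equation; controlling this ratio via the explicit expression for $G_{\rho_N}'$ gives the required lower bound. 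Should a direct estimate prove delicate, a bootstrap starting from the crude a priori bound $|\beta_N(z)| \leq 2N/|\mathcal{I}z|$, using it to ensure that $\sigma_N^2 \sum_k \tilde{R}(z)_{kk}/(\omega_N(z)-D_{kk})$ approximates $-N\sigma_N^2 G_{\nu_N}'(\omega_N(z))$ with a vanishing error, should close the argument and yield the bound with a polynomial $P_N$ of degree $5$ and coefficients uniformly bounded in $N$.
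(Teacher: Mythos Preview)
Your first half matches the paper: the bound you obtain on $E_N(z)=N\mathbb{E}[G_{\mu_N}(z)]-NG_{\nu_N}(\tilde\omega_N(z))$ is precisely the paper's Lemma \ref{boundbtilde} on $\tilde\beta_N(z)$, proved the same way via Lemma \ref{taylor} with $n=2$, \eqref{bounddifftrace} and \eqref{boundnum}. Your self-consistent identity $\beta_N(z)\bigl(1-\sigma_N^2\sum_k\tilde R(z)_{kk}/(\omega_N(z)-D_{kk})\bigr)=\tilde\beta_N(z)$ is also algebraically correct.

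The gap is in your treatment of the multiplier. Saying the correction is ``negligible on compacts'' is beside the point: the proposition asks for a polynomial bound valid for \emph{every} $z\in\C\setminus\R$, with coefficients uniform in $N$. Your bootstrap idea, as stated, does not close: from the a priori bound $|\beta_N(z)|\le 2N/|\mathcal Iz|$ you only get $|\omega_N(z)-\tilde\omega_N(z)|=\sigma_N^2|\beta_N(z)|\le 2N\sigma_N^2/|\mathcal Iz|$, which is $O(|\mathcal Iz|^{-1})$ and not small for $|\mathcal Iz|$ bounded. So the multiplier is not uniformly close to $1/\omega_N'(z)$, and you have no mechanism preventing it from vanishing. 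Even granting the approximation $\approx 1/\omega_N'(z)$, you still need a dichotomy to handle the regime where the approximation error is comparable to the main term.

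The paper sidesteps this by the Haagerup--Thorbj{\o}rnsen trick: set $z'=H_N(\tilde\omega_N(z))$, so that $z'-z=-\sigma_N^2\tilde\beta_N(z)$. One then argues by cases on the already-established bound for $\tilde\beta_N$. If $\sigma_N^2N^{-1}\tilde P_N(|\mathcal Iz|^{-1})\sum_k|\tilde R(z)_{kk}|^2\ge|\mathcal Iz|/2$, then the trivial bound $|\beta_N(z)|\le 2N/|\mathcal Iz|$ is itself dominated by the right-hand side of the proposition. Otherwise $|z'-z|\le|\mathcal Iz|/2$, hence $z'\in\C^+$ and $\omega_N(z')=\tilde\omega_N(z)$; writing $\beta_N(z)-\tilde\beta_N(z)=N(G_{\rho_N}(z')-G_{\rho_N}(z))=N\sigma_N^2\tilde\beta_N(z)\int(z'-x)^{-1}(z-x)^{-1}\,d\rho_N(x)$ and using $|\mathcal Iz'|\ge|\mathcal Iz|/2$ gives $|\beta_N(z)|\le(1+2N\sigma_N^2|\mathcal Iz|^{-2})|\tilde\beta_N(z)|$. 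The extra degree-$2$ factor combines with the degree-$3$ bound on $\tilde\beta_N$ to give $P_N$ of degree $5$. This dichotomy is the missing ingredient in your sketch; once you have it, your multiplier route can also be completed, but the paper's version is cleaner because it never needs a lower bound on the multiplier at all.
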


This will follow from a bound on 
$$\tilde{\beta}_N(z):=\esp[\Tr R_N(z)]-\Tr \tilde{R}(z).$$

\begin{lem}\label{boundbtilde}
There is a polynomial $\tilde{P}_N$ of degree $3$ with bounded coefficients such that, for $z \in \C\setminus \R$, 
$$|\tilde{\beta}_N(z)| \leqslant N^{-1}\tilde{P}_N(|\mathcal{I}z|^{-1})\sum_{k=1}^N|\tilde{R}(z)_{kk}|^2.$$
\end{lem}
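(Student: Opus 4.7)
The plan is to apply Lemma \ref{taylor} with $n=2$, which writes each diagonal entry of $R_N(z)$ as a second-order Taylor expansion around the deterministic quantity $\tilde R(z)_{kk}$. Setting
\[ \epsilon_k := W_{kk}+C_k^{(k)*}R^{(k)}(z)C_k^{(k)}-\sigma_N^2\esp[\Tr R_N(z)], \]
the expansion reads
\[ R_N(z)_{kk} = \tilde R(z)_{kk} + \tilde R(z)_{kk}^2\epsilon_k + \tilde R(z)_{kk}^2 R_N(z)_{kk}\epsilon_k^2. \]
Subtracting $\tilde R(z)_{kk}$, taking expectation, and summing over $k$ then splits $\tilde\beta_N(z)$ into a ``linear'' piece $\sum_k\tilde R(z)_{kk}^2\esp[\epsilon_k]$ and a ``quadratic remainder'' $\sum_k\tilde R(z)_{kk}^2\esp[R_N(z)_{kk}\epsilon_k^2]$, each of which I will bound by the right-hand side of the claim up to a polynomial factor in $|\mathcal{I}z|^{-1}$.

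For the linear piece, I would use that conditionally on $\{W_{ij}\}$ with $i,j\neq k$ one has $\esp_k[\epsilon_k]=\sigma_N^2(\Tr R^{(k)}(z)-\esp[\Tr R_N(z)])$, whence $\esp[\epsilon_k]=-\sigma_N^2\esp[\Tr R_N(z)-\Tr R^{(k)}(z)]$. The one-rank perturbation estimate \eqref{bounddifftrace} yields $|\esp[\epsilon_k]|\leq\sigma_N^2|\mathcal{I}z|^{-1}$, which is of order $N^{-1}|\mathcal{I}z|^{-1}$ by assumption \ref{hyp:offdiagonal}. For the quadratic remainder, I would plug in the deterministic bound $|R_N(z)_{kk}|\leq|\mathcal{I}z|^{-1}$ and the variance estimate \eqref{boundnum}, which already gives $\esp[|\epsilon_k|^2]\leq s_N^2+Nm_N|\mathcal{I}z|^{-2}+(3N+1)\sigma_N^4|\mathcal{I}z|^{-2}=O(N^{-1})(1+|\mathcal{I}z|^{-2})$ thanks to \ref{hyp:offdiagonal} and \ref{hyp:diagonal}. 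Hence $|\esp[R_N(z)_{kk}\epsilon_k^2]|=O(N^{-1})(|\mathcal{I}z|^{-1}+|\mathcal{I}z|^{-3})$.

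Factoring $|\tilde R(z)_{kk}|^2$ out of each sum and collecting the two estimates produces a bound of the form $N^{-1}\tilde P_N(|\mathcal{I}z|^{-1})\sum_k|\tilde R(z)_{kk}|^2$, where $\tilde P_N$ is a polynomial of degree at most $3$ whose coefficients are controlled by $N\sigma_N^2\to\sigma^2$, $Ns_N^2\to s^2$ and $Nm_N=O(1)$, hence bounded uniformly in $N$. I do not anticipate any real obstacle here: the decisive step is simply to expand to order two rather than one, since a first-order remainder would only be of order $N^{-1/2}$ and could not yield the required $N^{-1}$ gain; all other ingredients are already assembled in Section \ref{concentration}.
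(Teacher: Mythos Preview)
Your proposal is correct and follows essentially the same approach as the paper: expand $R_N(z)_{kk}$ via Lemma \ref{taylor} with $n=2$, bound the linear term $\tilde R(z)_{kk}^2\esp[\epsilon_k]$ using \eqref{bounddifftrace} to get $\sigma_N^2|\mathcal{I}z|^{-1}|\tilde R(z)_{kk}|^2$, and bound the quadratic remainder via $|R_N(z)_{kk}|\leq|\mathcal{I}z|^{-1}$ together with \eqref{boundnum}. The resulting polynomial $\tilde P_N(y)=(N\sigma_N^2+Ns_N^2)y+(N^2m_N+N(3N+1)\sigma_N^4)y^3$ is exactly the one the paper obtains.
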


\begin{proof}[Proof of Lemma \ref{boundbtilde}]
The strategy is to control for each $k$ the difference between $\esp[R_N(z)_{kk}]$ and $\tilde{R}(z)_{kk}$, uniformly in $k$. First of all, $R_N(z)_{kk}-\tilde{R}(z)_{kk}$ is expanded thanks to Lemma \ref{taylor} up to order $n=2$. For $z\in \mathbb{C}\setminus \mathbb{R}$, 
\begin{align*}
R_N(z)_{kk}-\tilde{R}(z)_{kk} & =   \tilde{R}(z)_{kk}^2\big(W_{kk}+C_k^{(k)*}R^{(k)}(z)C_k^{(k)}-\sigma_N^2\mathbb{E}[\Tr(R_N(z))]\big)\\
	  & \quad + \tilde{R}(z)_{kk}^2R(z)_{kk}\big(W_{kk}+C_k^{(k)*}R^{(k)}(z)C_k^{(k)}-\sigma_N^2\mathbb{E}[\Tr(R_N(z))]\big)^2.
\end{align*}
Then, using \eqref{bounddifftrace} and \eqref{boundnum},
\begin{align*}
|\esp[R_N(z)_{kk}] - \tilde{R}(z)_{kk}| & \leq \sigma_N^2|\tilde{R}(z)_{kk}|^2\esp[|\Tr (R^{(k)}(z))-\Tr(R_N(z))|]\\
	  & \quad  + |\tilde{R}(z)_{kk}|^2\esp\Big[\big|R(z)_{kk}\big|\big|W_{kk}+C_k^{(k)*}R^{(k)}(z)C_k^{(k)}-\sigma_N^2\mathbb{E}[\Tr(R_N(z))]\big|^2\Big]\\
					         & \leq \frac{\sigma_N^2}{|\Im z|}|\tilde{R}(z)_{kk}|^2 + \Big(\frac{s_N^2}{|\Im z|}+\frac{Nm_N+ (3N+1)\sigma_N^4}{|\Im z|^3}\Big)|\tilde{R}(z)_{kk}|^2.
\end{align*}
Summing on $k$ yields
$$|\tilde{\beta}_N(z)| \leqslant N^{-1}\Big(\frac{N\sigma_N^2+Ns_N^2}{|\Im z|}+\frac{N^2m_N+ N(3N+1)\sigma_N^4}{|\Im z|^3}\Big)\sum_{k=1}^N|\tilde{R}(z)_{kk}|^2.$$
The assumptions on $\sigma_N$, $s_N$ and $m_N$ conclude the proof.
\end{proof}

We are ready for the proof of Proposition \ref{boundb}.

\begin{proof}[Proof of Proposition \ref{boundb}]
We borrow a trick from \cite{HaaTho05}. Define $z'=H_N(\tilde{\omega}_N(z))$. Observe then that 
$$z'-z=-\sigma_N^2\tilde{\beta}_N(z).$$
On the one hand, if 
$$\sigma_N^2N^{-1}\tilde{P}_N(|\mathcal{I}z|^{-1})\sum_{k=1}^N|\tilde{R}(z)_{kk}|^2\geq \frac{|\mathcal{I}z|}{2},$$
or equivalently 
$$1\leq 2\sigma_N^2N^{-1}|\Im z|^{-1}\tilde{P}_N(|\mathcal{I}z|^{-1})\sum_{k=1}^N|\tilde{R}(z)_{kk}|^2,$$
then it is straightforward to get
$$|\beta_N(z)| \leqslant 2N|\mathcal{I}z|^{-1} \leqslant N^{-1}4N\sigma_N^2|\mathcal{I}z|^{-2}\tilde{P}_N(|\mathcal{I}z|^{-1})\sum_{k=1}^N|\tilde{R}(z)_{kk}|^2.$$
On the other hand, if 
$$\sigma_N^2N^{-1}\tilde{P}_N(|\mathcal{I}z|^{-1})\sum_{k=1}^N|\tilde{R}(z)_{kk}|^2\leq \frac{|\mathcal{I}z|}{2},$$
one has, using the bound we obtained on $\tilde{\beta}_N(z)$: 
$$|\mathcal{I}z'-\mathcal{I}z|\leq |z'-z|=\sigma_N^2|\tilde{\beta}_N(z)|\leq \frac{|\mathcal{I}z|}{2},$$
therefore $z'\in \mathbb{C}^+$, or equivalently $\tilde{\omega}_N(z)\in \Omega_N$, and consequently $\omega_N(z')=\tilde{\omega}_N(z)$. It follows that 
\begin{align*}
\beta_N(z)-\tilde{\beta}_N(z) & = N\big(G_{\nu_N}(\tilde{\omega}_N(z))-G_{\nu_N}(\omega_N(z))\big)\\
			  & = N\big(G_{\nu_N}(\omega_N(z'))-G_{\nu_N}(\omega_N(z))\big)\\
			  & = N\big(G_{\rho_N}(z')-G_{\rho_N}(z)\big)\\
			  & = N(z-z')\int_{\mathbb{R}}\frac{\rho_N(dx)}{(z'-x)(z-x)}\\
			  & = N\sigma_N^2\tilde{\beta}_N(z)\int_{\mathbb{R}}\frac{\rho_N(dx)}{(z'-x)(z-x)}.
\end{align*}
Hence, 
\[|\beta_N(z)|\leq (1+2N\sigma_N^2|\Im z|^{-2})|\tilde{\beta}_N(z)|\leq N^{-1}(1+2N\sigma_N^2|\Im z|^{-2})\tilde{P}_N(|\mathcal{I}z|^{-1})\sum_{k=1}^N|\tilde{R}(z)_{kk}|^2,\]
which concludes the proof.
\end{proof}

\subsection{Equivalent of the bias of the Stieltjes transform} \label{subsectionconvb}

The aim of this section is to show that $\beta_N(z)$ converges and to compute its limit $\beta(z)$. 
Note that, in this subsection, the entries of $W_N$ are supposed to be bounded by $\delta_N$.

\begin{prop} \label{convb}
For $z \in \C\setminus \R$,
$$\beta_N(z)\underset{N\to +\infty}{\longrightarrow} \beta(z):=-\frac{\omega''(z)}{2\omega'(z)^2}\Big[\frac{s^2-\sigma^2-\tau}{\sigma^2}+\frac{\tau}{\sigma^2}\frac{\omega'(z)}{\frac{\tau}{\sigma^2}+(1-\frac{\tau}{\sigma^2})\omega'(z)}+\frac{\kappa}{\sigma^4}(1-\frac{1}{\omega'(z)}) \Big].$$
\end{prop}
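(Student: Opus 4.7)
The plan is to proceed in two reductions. First, I would reduce the computation of $\beta(z)$ to that of $\tilde\beta(z) := \lim_N \tilde\beta_N(z)$, where $\tilde\beta_N(z) := \mathbb{E}[\Tr R_N(z)] - \Tr \tilde R(z)$; then I would compute $\tilde\beta(z)$ through an asymptotic expansion of the diagonal entries of $R_N(z)$. The proof of Proposition \ref{boundb} already provides the identity
\[ \beta_N(z) - \tilde\beta_N(z) = N\sigma_N^2\, \tilde\beta_N(z) \int_{\R} \frac{\rho_N(dx)}{(z'-x)(z-x)}, \qquad z' := H_N(\tilde\omega_N(z)). \]
Once $\tilde\beta_N(z)$ is shown to converge, $z' - z = -\sigma_N^2 \tilde\beta_N(z) \to 0$; since $\rho_N \Rightarrow \rho$, the integral tends to $-G'_\rho(z)$, and combined with $N\sigma_N^2 \to \sigma^2$ this forces $\beta(z) = (1-\sigma^2 G'_\rho(z))\,\tilde\beta(z) = \omega'(z)\,\tilde\beta(z)$.

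I would then apply Lemma \ref{taylor} at order $n=3$:
\[ R_N(z)_{kk} - \tilde R(z)_{kk} = \tilde R(z)_{kk}^2 Y_k + \tilde R(z)_{kk}^3 Y_k^2 + \tilde R(z)_{kk}^3 R_N(z)_{kk} Y_k^3, \]
where $Y_k = W_{kk} + C_k^{(k)*} R^{(k)}(z) C_k^{(k)} - \sigma_N^2 \mathbb{E}[\Tr R_N(z)]$. The cubic remainder is controlled via $\|R_N(z)\|\leq |\mathcal{I}z|^{-1}$, $|\tilde R(z)_{kk}|\leq |\mathcal{I}z|^{-1}$, and a moment estimate for $|Y_k|^3$ deduced by interpolating Lemma \ref{lem_moment_quadratic_forms} (at $p=2$ and $p=4$) with the boundedness of entries $|W_{ij}|\leq \delta_N$; one obtains $\mathbb{E}[|Y_k|^3]=O(N^{-3/2})$, giving $\sum_k |\tilde R(z)_{kk}|^3 \mathbb{E}[|R_N(z)_{kk}|\,|Y_k|^3] = o(1)$.

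It remains to compute the two leading sums. By \eqref{eq_trR_trRk},
\[ \mathbb{E}[Y_k] = -\sigma_N^2\, \mathbb{E}\bigl[R_N(z)_{kk}\bigl(1 + C_k^{(k)*} R^{(k)}(z)^2 C_k^{(k)}\bigr)\bigr]; \]
replacing $R_N(z)_{kk}$ by $\tilde R(z)_{kk}$ and the quadratic form by its conditional mean $\sigma_N^2 \Tr R^{(k)}(z)^2$ (with errors controlled by Cauchy--Schwarz via Proposition \ref{variancebound2} and Lemma \ref{quadratic forms}), one obtains
\[ \sum_k \tilde R(z)_{kk}^2 \mathbb{E}[Y_k] \longrightarrow -\omega'(z)\cdot \frac{\sigma^2}{2} G''_{\nu_\infty}(\omega(z)), \]
using $\sigma_N^2 \Tr \tilde R(z)^3 = N\sigma_N^2 \int (\tilde\omega_N(z)-x)^{-3}\nu_N(dx) \to \tfrac{\sigma^2}{2} G''_{\nu_\infty}(\omega(z))$ and $1 + \sigma_N^2 \mathbb{E}[\Tr R_N(z)^2] \to \omega'(z)$. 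For the second sum I would decompose $Y_k = W_{kk} + V_k + \sigma_N^2(\Tr R^{(k)}(z) - \mathbb{E}[\Tr R_N(z)])$ with $V_k := C_k^{(k)*} R^{(k)}(z) C_k^{(k)} - \sigma_N^2 \Tr R^{(k)}(z)$, and use $\mathbb{E}[V_k \mid R^{(k)}] = 0$ together with the independence of $W_{kk}$ from the rest to kill cross terms. Lemma \ref{quadratic forms} then expands $\mathbb{E}[V_k^2]$ into a $\sigma_N^4 \Tr R^{(k)}(z)^2$ piece, a $\tau_N^2 \Tr(R^{(k)}(z) R^{(k)}(z)^T)$ piece, and a $\kappa_N \sum_i R^{(k)}(z)_{ii}^2$ piece; each contributes at order $1$ to $\sum_k \tilde R(z)_{kk}^3 \mathbb{E}[Y_k^2]$ via the convergence of $\nu_N$.

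Collecting both sums and multiplying by $\omega'(z)$ from the first step yields $\beta(z)$. The matching to the stated formula uses the differentiated subordination relations $G'_\rho = (G'_{\nu_\infty}\circ\omega)\,\omega'$ and $G''_\rho = (G''_{\nu_\infty}\circ\omega)(\omega')^2 + (G'_{\nu_\infty}\circ\omega)\,\omega''$ together with $\omega'(z) = 1 - \sigma^2 G'_\rho(z)$ and $\omega''(z) = -\sigma^2 G''_\rho(z)$; these imply $G''_{\nu_\infty}(\omega(z)) = G''_\rho(z)/\omega'(z)^3$ and $G'_{\nu_\infty}(\omega(z)) = G'_\rho(z)/\omega'(z)$, producing the prefactor $\tfrac{G''_\rho(z)}{2\omega'(z)^2}$ and the $-\kappa\,G'_\rho/\omega'$ term. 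I expect the main obstacle to lie in the $\tau$-contribution: the quantity $N^{-1}\Tr(R^{(k)}(z) R^{(k)}(z)^T)$ is not directly a standard Stieltjes-type transform of $\mu_N$ in the complex Hermitian case, and evaluating its limit requires deriving a separate self-consistent equation which, after resolution, produces the nontrivial denominator $\tau + (\sigma^2 - \tau)\omega'(z)$ in the final expression.
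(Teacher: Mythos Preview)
Your proposal is correct and follows essentially the same route as the paper. The reduction $\beta(z)=\omega'(z)\tilde\beta(z)$ is the same (the paper rewrites it as $\beta_N=(1-N\sigma_N^2\int(\tilde\omega_N-x)^{-1}(\omega_N-x)^{-1}d\nu_N)^{-1}\tilde\beta_N$, but your version via the identity from the proof of Proposition~\ref{boundb} is equivalent once $\tilde\beta_N$ is known to be bounded); the order-$3$ Taylor expansion, the computation of $\mathbb{E}[Y_k]$ via \eqref{eq_trR_trRk}/\eqref{psi}, and the decomposition of $\mathbb{E}[Y_k^2]$ via Lemma~\ref{quadratic forms} are exactly what the paper does. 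One cosmetic difference: the paper packages the cancellation between the $-\sigma_N^2\tilde R_{kk}^3(1+\sigma_N^2\mathbb{E}[\Tr R^{(k)2}])$ coming from $\mathbb{E}[Y_k]$ and the $\sigma_N^4\mathbb{E}[\Tr R^{(k)2}]$ piece of $\mathbb{E}[Y_k^2]$ directly into \eqref{eq_bias}, obtaining the clean bracket $s_N^2-\sigma_N^2+\tau_N^2(\cdots)+\kappa_N(\cdots)$, whereas you compute the two limits separately and combine; both lead to the same $(s^2-\sigma^2)$ term.

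Your identification of the $\tau$-contribution as the main obstacle is exactly right: the paper handles $\mathbb{E}[\Tr(R^{(k)}(z)\,{}^tR^{(k)}(z))]$ via a dedicated self-consistent equation (Lemma~\ref{BaoXie}), derived by expanding $(z-D_{ii})(R^{(k)}\,{}^tR^{(k)})_{ii}$, removing the dependence on $W_{ij}$ through \eqref{remove}, and solving the resulting linear relation in $\mathbb{E}[\Tr(R^{(k)}\,{}^tR^{(k)})]$; this produces precisely the factor $(1-\tau_N\sum_i\tilde R_{ii}^2)^{-1}$ you anticipated, which in the limit becomes $(\tau+(\sigma^2-\tau)\omega'(z))^{-1}$ up to normalization. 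You should be aware that carrying this out requires the boundedness assumption $|W_{ij}|\le\delta_N$ (used repeatedly via \eqref{removebound}) and an a~priori check that $1-\tau_N\sum_i\tilde R_{ii}^2$ stays bounded away from $0$ (Lemma~\ref{lem_well_defined} in the paper).
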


Remark that, in the non deformed case, $\nu_{\infty}=\delta_0$, $\rho=\mu_{\sigma^2}$ and $\omega(z)=z-\sigma^2G_{\mu_{\sigma^2}}(z)=G_{\mu_{\sigma^2}}(z)^{-1}$. In this case, 
$\beta(z)$ coincides with $b_0(\varphi_z)$ in Theorem \ref{BaoXieThm}.

We deduce Proposition \ref{convb} from the following Lemma stating the convergence of $\tilde{\beta}_N(z)$:

\begin{lem}
For $z \in \C\setminus \R$,
$$\tilde{\beta}_N(z)\underset{N\to +\infty}{\longrightarrow} \tilde{\beta}(z):=-\frac{\omega''(z)}{2\omega'(z)^3}\Big[\frac{s^2-\sigma^2-\tau}{\sigma^2}+\frac{\tau}{\sigma^2}\frac{\omega'(z)}{\frac{\tau}{\sigma^2}+(1-\frac{\tau}{\sigma^2})\omega'(z)}+\frac{\kappa}{\sigma^4}(1-\frac{1}{\omega'(z)}) \Big].$$
\end{lem}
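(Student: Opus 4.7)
My plan is to expand $R_N(z)_{kk}$ via Lemma \ref{taylor} at order $n=2$:
\[R_N(z)_{kk}-\tilde R(z)_{kk} = \tilde R(z)_{kk}^2 \xi_k + \tilde R(z)_{kk}^2 R_N(z)_{kk} \xi_k^2,\]
where $\xi_k := W_{kk} + C_k^{(k)*}R^{(k)}(z)C_k^{(k)} - \sigma_N^2\esp[\Tr R_N(z)]$. Summing and taking expectation splits $\tilde\beta_N(z) = T_1+T_2$ into $T_1 := \sum_k \tilde R(z)_{kk}^2 \esp[\xi_k]$ and $T_2 := \sum_k \tilde R(z)_{kk}^2 \esp[R_N(z)_{kk}\xi_k^2]$. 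In $T_2$ I replace $R_N(z)_{kk}$ by $\tilde R(z)_{kk}$: the error is controlled via Hölder's inequality together with \eqref{eq_approximation_Rk_tildeR} and the refined moment estimate \eqref{eq_improved_bound}, which gives $o(1)$ after summation. Thus $T_2 = \sum_k \tilde R(z)_{kk}^3\esp[\xi_k^2] + o(1)$.

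\textbf{Leading order of $T_1$.} For $T_1$, Schur's formula \eqref{eq_trR_trRk} and the concentration estimates of Section \ref{concentration} give $\esp[\xi_k] = -\sigma_N^2\tilde R(z)_{kk}\bigl(1+\sigma_N^2\esp[\Tr R^{(k)}(z)^2]\bigr) + o(N^{-1})$. Differentiating the definition $\tilde\omega_N(z) = z-\sigma_N^2\esp[\Tr R_N(z)]$ yields the key identity $1+\sigma_N^2\esp[\Tr R_N(z)^2] = \tilde\omega_N'(z) \to \omega'(z)$. Combined with the convergence $\tfrac{1}{N}\sum_k \tilde R(z)_{kk}^3 \to \tfrac{1}{2}G_{\nu_\infty}''(\omega(z)) = G_\rho''(z)/(2\omega'(z)^3)$, obtained by differentiating the subordination $G_\rho = G_{\nu_\infty}\circ\omega$ twice and using $\omega' = 1-\sigma^2 G_\rho'$, $\omega'' = -\sigma^2 G_\rho''$, this gives $T_1 \to -\sigma^2 G_\rho''(z)/(2\omega'(z)^2)$.

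\textbf{Expanding $T_2$.} I decompose $\xi_k = W_{kk} + A_k + B_k$ where $A_k := C_k^{(k)*}R^{(k)}(z)C_k^{(k)}-\sigma_N^2\Tr R^{(k)}(z)$ and $B_k := \sigma_N^2(\Tr R^{(k)}(z)-\esp[\Tr R_N(z)])$. Cross-terms $\esp[W_{kk}A_k]$, $\esp[W_{kk}B_k]$, $\esp[A_kB_k]$ vanish by independence or conditional centering, and $\esp[B_k^2]=O(\sigma_N^4)$ is negligible after summation by Proposition \ref{variancebound2}. Applying Lemma \ref{quadratic forms} to $\esp[A_k^2]$ yields
\[\esp[\xi_k^2] = s_N^2 + \sigma_N^4 \esp[\Tr R^{(k)}(z)^2] + \tau_N^2\esp\bigl[\textstyle\sum_{i,j\neq k}(R^{(k)}(z)_{ij})^2\bigr] + \kappa_N\esp\bigl[\textstyle\sum_{i\neq k}(R^{(k)}(z)_{ii})^2\bigr] + o(N^{-1}).\]
The $s^2$-piece contributes $s^2 G_\rho''/(2\omega'^3)$; the $\sigma_N^4\Tr R^{(k)}(z)^2$-piece contributes $\sigma^2(\omega'-1)G_\rho''/(2\omega'^3)$ via $\sigma_N^2\esp[\Tr R^{(k)}(z)^2]\to\omega'-1$; the $\kappa$-piece yields $-\kappa G_\rho' G_\rho''/(2\omega'^4)$ after replacing $R^{(k)}_{ii}$ by $\tilde R_{ii}$ using \eqref{eq_approximation_Rk_tildeR} and invoking $\tfrac{1}{N}\sum_i\tilde R_{ii}^2\to -G_\rho'/\omega'$.

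\textbf{The $\tau$-contribution and assembly.} The main technical difficulty is the $\tau^2$-term, since $\sum_{i,j}(R_{ij})^2 = \Tr(RR^T)$ is not the trace of a polynomial in $R$ alone and does not reduce to a standard Stieltjes transform. To identify the limit of $\Phi_N(z) := \tfrac{1}{N}\esp[\Tr(R_N(z)R_N(z)^T)]$, I would apply Schur complement to the $(k,k)$ diagonal entry of $R_N R_N^T$ and, after conditional expectations and concentration, derive a fixed-point equation whose solution reads $\Phi(z) = -G_\rho'(z)/(1-(\sigma^2-\tau)G_\rho'(z)) = (\omega'(z)-1)/(\tau+(\sigma^2-\tau)\omega'(z))$. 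Collecting the four contributions to $T_2$ and adding $T_1$, the factor of $\omega'$ arising in $T_1$ cancels with the one from the $\sigma_N^4\Tr R^{(k)}(z)^2$-piece via $-\sigma^2\omega'+\sigma^2(\omega'-1) = -\sigma^2$, yielding the announced expression for $\tilde\beta(z)$. The derivation of the fixed-point equation for $\Phi_N$ is the crux of the proof, as it involves correlations between distinct rows/columns of $W_N$ that cannot be read off from the scalar subordination.
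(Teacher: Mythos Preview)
Your proposal is correct and follows essentially the same route as the paper. Your order-$2$ expansion plus replacement of $R_N(z)_{kk}$ by $\tilde R(z)_{kk}$ in $T_2$ is algebraically equivalent to the paper's order-$3$ expansion of Lemma~\ref{taylor}: the replacement error $\tilde R_{kk}^2(R_{kk}-\tilde R_{kk})\xi_k^2$ equals $\tilde R_{kk}^3 R_{kk}\xi_k^3$ by the order-$1$ identity, which is exactly the paper's cubic remainder. The decomposition of $\esp[\xi_k^2]$, the cancellation $-\sigma^2\omega'+\sigma^2(\omega'-1)=-\sigma^2$, and the limiting formulas you write down all match the paper's computation leading to \eqref{eq_bias}.

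One point of phrasing: for the $\tau$-term you propose to ``apply Schur complement to the $(k,k)$ diagonal entry of $R_NR_N^T$''. This is not quite right as stated, since $R_NR_N^T$ is not the inverse of anything to which Proposition~\ref{Schur} applies. The paper's route to the fixed-point equation (its Lemma~\ref{BaoXie}) instead writes $(z-D_{ii})(R^{(k)}\,{}^tR^{(k)})_{ii}$ via the resolvent equation $zR^{(k)}=I+(D+W)R^{(k)}$, then removes the dependence on each $W_{ij}$ using the rank-one perturbation identity \eqref{remove}; after taking expectations this produces the self-consistent relation $(1-\tau_N\sum_{i\neq k}\tilde R_{ii}^2)\,\esp[\Tr({}^tR^{(k)}R^{(k)})]=\sum_{i\neq k}\tilde R_{ii}^2+O(N\delta_N)$, whose limit is exactly your formula $\Phi=(\omega'-1)/(\tau+(\sigma^2-\tau)\omega')$. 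So your target and overall logic are right; only the mechanism you name for reaching the fixed-point equation needs to be swapped for the resolvent-equation-plus-\eqref{remove} argument.
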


\begin{proof} 
The conclusion is obtained by controlling for each $k$ the difference between $\esp[R_N(z)_{kk}]$ and $\tilde{R}(z)_{kk}$, uniformly in $k$. This time, $R_N(z)_{kk}-\tilde{R}(z)_{kk}$ is expanded using Lemma \ref{taylor} up to order $n=3$: for $z\in \mathbb{C}\setminus \mathbb{R}$, 
\begin{align*}
R_N(z)_{kk}-\tilde{R}(z)_{kk} & = \tilde{R}(z)_{kk}^2\big(W_{kk}+C_k^{(k)*}R^{(k)}(z)C_k^{(k)}-\sigma_N^2\mathbb{E}[\Tr(R_N(z))]\big)\\
			      & \quad + \tilde{R}(z)_{kk}^3\big(W_{kk}+C_k^{(k)*}R^{(k)}(z)C_k^{(k)}-\sigma_N^2\mathbb{E}[\Tr(R_N(z))]\big)^2\\
			      & \quad +\tilde{R}(z)_{kk}^3R(z)_{kk}\big(W_{kk}+C_k^{(k)*}R^{(k)}(z)C_k^{(k)}-\sigma_N^2\mathbb{E}[\Tr(R_N(z))]\big)^3.
\end{align*}
By \eqref{psi}, the expectation of the first parenthesis writes:
\begin{align*}
\mathbb{E}\big[W_{kk}+C_k^{(k)*}R^{(k)}(z)C_k^{(k)}-\sigma_N^2\mathbb{E}[\Tr(R_N(z))]\big] & = \sigma_N^2\mathbb{E}[\Tr(R^{(k)}(z))-\Tr(R_N(z))]\\
& = -\sigma_N^2\mathbb{E}\big[\frac{1+\sigma_N^2\Tr(R^{(k)}(z)^2)}{z-D_{kk}-\sigma_N^2\Tr(R^{(k)}(z))}+\Psi_k\big]\\
& = -\sigma_N^2\tilde{R}(z)_{kk}\big(1+\sigma_N^2\mathbb{E}[\Tr(R^{(k)}(z)^2)]\big)+O(N^{-3/2}),
\end{align*}
uniformly in $k$. 

Applying Lemma \ref{quadratic forms}, the expectation of the second parenthesis becomes:
\begin{align*}
  \mathbb{E}\big[\big(W_{kk}+ & C_k^{(k)*}R^{(k)}(z)C_k^{(k)}-  \sigma_N^2\mathbb{E}[\Tr(R_N(z))]\big)^2\big]\\
 & = \mathbb{E}[W_{kk}^2]+\mathbb{E}\big[\big(C_k^{(k)*}R^{(k)}(z)C_k^{(k)}-\sigma_N^2\Tr(R^{(k)}(z))\big)^2\big]\\
 & \quad +\sigma_N^4\big(\mathbb{E}\big[(\Tr(R^{(k)}(z))-\mathbb{E}[\Tr(R^{(k)}(z))])^2\big]+\mathbb{E}\big[\Tr(R^{(k)}(z))-\Tr(R_N(z))\big]^2\big)\\
 & = s_N^2+\sigma_N^4\mathbb{E}[\Tr(R^{(k)}(z)^2)]+\tau_N^2\mathbb{E}[\Tr(^tR^{(k)}(z)R^{(k)}(z))]+\kappa_N\sum_{i\neq k}\mathbb{E}[R^{(k)}(z)_{ii}^2]\\
 & \quad +\sigma_N^4\mathbb{E}\big[(\Tr(R^{(k)}(z))-\mathbb{E}[\Tr(R^{(k)}(z))])^2\big]+\sigma_N^4\mathbb{E}[\Tr(R^{(k)}(z))-\Tr(R_N(z))]^2.
\end{align*}
Note that the last two terms are $O(N^{-2})$, uniformly in $k$.

To bound the third parenthesis, we will make use of Lemma \ref{lem_moment_quadratic_forms} and \eqref{bounddifftrace}: 
\begin{align*}
\mathbb{E}\big[\big|W_{kk}+C_k^{(k)*}R^{(k)}(z) & C_k^{(k)} -\sigma_N^2\mathbb{E}[\Tr(R_N(z))]\big|^3\big]\\
& \leq 16\big(\mathbb{E}[|W_{kk}|^3]+\mathbb{E}\big[\big|C_k^{(k)*}R^{(k)}(z)C_k^{(k)}-\sigma_N^2\Tr(R^{(k)}(z))\big|^3\big]\\
& \quad + \sigma_N^6\big(\mathbb{E}[|\Tr(R^{(k)}(z))-\mathbb{E}[\Tr(R^{(k)}(z))]|^3]+\mathbb{E}[|\Tr(R^{(k)}(z))-\Tr(R_N(z))|]^3\big)\big)\\
& \leq 16\delta_Ns_N^2+16K_{3}|\Im z|^{-3}((Nm_N)^{3/2}+\delta_N^{2}Nm_N)\\
& \quad + 32N\sigma_N^6|\mathcal{I}z|^{-1}\Var[\Tr(R^{(k)}(z))] + 16\sigma_N^6|\mathcal{I}z|^{-3}=o(N^{-1}),
\end{align*}
uniformly in $k$. Hence, using the bound from Proposition \ref{variancebound2},
\begin{equation}\label{eq_bias}
\mathbb{E}[R_N(z)_{kk}]-\tilde{R}(z)_{kk} - \tilde{R}(z)_{kk}^3\Big(s_N^2-\sigma_N^2 + \tau_N^2\mathbb{E}[\Tr(^tR^{(k)}(z)R^{(k)}(z))]+\kappa_N\sum_{i\neq k}\mathbb{E}\big[R^{(k)}(z)_{ii}^2\big]\Big)=o(N^{-1}),
\end{equation}
uniformly in $k$.

We will now deal with the term $\mathbb{E}[\Tr(^tR^{(k)}(z)R^{(k)}(z))]$:

\begin{lem}\label{BaoXie}
For $z\in \C\setminus \R$, for large enough $N$,
\[\mathbb{E}[\Tr(^tR^{(k)}(z)R^{(k)}(z))]=\frac{\sum_{i\neq k}\tilde{R}(z)_{ii}^2}{1-\tau_N\sum_{i\neq k}\tilde{R}(z)_{ii}^2} +O(N\delta_N),\]
uniformly in $k$.
\end{lem}

We first check that the right-hand side is well-defined:

\begin{lem}\label{lem_well_defined}
For any compact subset $K$ of $\C\setminus \R$,
\[\limsup_{N\to +\infty} \sup_{z_1, z_2\in K} \sup_{|w_{i,N}|\leq \sigma_N^2} \sum_{i=1}^N|w_{i,N}||\tilde{R}(z_1)_{ii}\tilde{R}(z_2)_{ii}|<1.\]
\end{lem}

\begin{proof}
Note that (we follow the proof of Corollary 3.35 in \cite{JiLee})
\begin{align*}
\sum_{i=1}^N|w_{i,N}||\tilde{R}(z_1)_{ii}\tilde{R}(z_2)_{ii}|
& \leqslant \frac{N\sigma_N^2}{2N}\sum_{i=1}^N\Big(|\tilde{R}(z_1)_{ii}|^2+|\tilde{R}(z_2)_{ii}|^2\Big)\\
& \leqslant \frac{N\sigma_N^2}{2} \Big(\int_\mathbb{R} \frac{d\nu_N(x)}{|\tilde{\omega}_N(z_1)-x|^2}+ \int_\mathbb{R}\frac{d\nu_N(x)}{|\tilde{\omega}_N(z_2)-x|^2}\Big).
\end{align*}
Since
\begin{align*}
\sup_{z\in K}N\sigma_N^2\int_\mathbb{R} \frac{d\nu_N(x)}{|\tilde{\omega}_N(z)-x|^2}
&=\sup_{z\in K}N\sigma_N^2\frac{-\Im G_{\nu_N}(\tilde{\omega}_N(z))}{\Im\tilde{\omega}_N(z)}\\
&=\sup_{z\in K}\frac{-N\sigma_N^2\Im \mathbb{E}[G_{\mu_N}(z)]}{\Im z-N\sigma_N^2\Im \mathbb{E}[G_{\mu_N}(z)]}+O(N^{-1/2}),
\end{align*}
Thus, by Pastur's Theorem,
\[ \limsup_{N\to +\infty} \sup_{z_1, z_2\in K} \sup_{|w_{i,N}|\leq \sigma_N^2} \sum_{i=1}^N|w_{i,N}||\tilde{R}(z_1)_{ii}\tilde{R}(z_2)_{ii}| \leqslant \sup_{z\in K}\frac{-\sigma^2\Im \mathbb{E}[G_{\rho}(z)]}{\Im z-\sigma^2\Im \mathbb{E}[G_{\rho}(z)]}<1.\]
\end{proof}

\begin{proof}[Proof of Lemma \ref{BaoXie}]
 Using the fact that $R^{(k)}(z)$ is the inverse of $zI_{N-1}-M^{(k)}$ yields the following:
\[(z-D_{ii})\mathbb{E}\big[(R^{(k)}(z)^tR^{(k)}(z))_{ii}\big]=\mathbb{E}\big[R^{(k)}(z)_{ii}\big]+\sum_{j\neq k}\mathbb{E}\big[W_{ij}(R^{(k)}(z)^tR^{(k)}(z))_{ji}\big].\]
Decompose $R^{(k)}(z)$ (apply \eqref{remove} twice) in order to remove the dependence with $W_{ij}$:
\begin{align}\label{eq_decomposition_Rk}
 R^{(k)}(z) & = R^{(kij)}(z)+R^{(kij)}(z)(1-\frac{1}{2}\delta_{ij})(W_{ij}E_{ij}+\overline{W_{ij}}E_{ji})R^{(kij)}(z) \notag \\
 & \quad + \big(R^{(kij)}(z)(1-\frac{1}{2}\delta_{ij})(W_{ij}E_{ij}+\overline{W_{ij}}E_{ji})\big)^2R^{(k)}(z),
\end{align}
so that, for each $j\neq k$, $\mathbb{E}\big[W_{ij}(R^{(k)}(z)^tR^{(k)}(z))_{ji}\big]$ is the sum of nine terms. 
The only contributing terms are 
\[\mathbb{E}\big[W_{ij}\big(R^{(kij)}(z)^t\big(R^{(kij)}(z)(1-\frac{1}{2}\delta_{ij})(W_{ij}E_{ij}+\overline{W_{ij}}E_{ji})R^{(kij)}(z)\big)\big)_{ji}\big]\]
and
\[\mathbb{E}\Big[W_{ij}\Big(R^{(kij)}(z)(1-\frac{1}{2}\delta_{ij})(W_{ij}E_{ij}+\overline{W_{ij}}E_{ji})R^{(kij)}(z)^tR^{(kij)}(z)\Big)_{ji}\Big].\]
Indeed, by independence of $W_{ij}$ and $R^{(kij)}(z)$, the term $\mathbb{E}\big[W_{ij}(R^{(kij)}(z)^tR^{(kij)}(z))_{ji}\big]$ vanishes. The six other terms may be bounded by $C|\Im z|^{-l-1}\esp[|W_{ij}|^l]$, for $l \in \{3,4,5\}$ and $C>0$ a numerical constant which does not depend on $k$, and are therefore $O(\delta_NN^{-1})$, uniformly in $i,j,k$. When $i\neq j$, the first contributing term is equal to
\begin{align}\label{eq_bias_contributing_term1}
\mathbb{E} & \big[W_{ij}\big(R^{(kij)}(z)^tR^{(kij)}(z)(W_{ij}E_{ji}+\overline{W_{ij}}E_{ij})^tR^{(kij)}(z)\big)_{ji}\big] \notag \\
& = \tau_N\mathbb{E}\big[\big(R^{(kij)}(z)^tR^{(kij)}(z)\big)_{jj}R^{(kij)}(z)_{ii}\big]+\sigma_N^2\mathbb{E}\big[\big(R^{(kij)}(z)^tR^{(kij)}(z)\big)_{ji}R^{(kij)}(z)_{ij}\big].
\end{align}
Similarly, the second one is
\begin{equation}\label{eq_bias_contributing_term2}
 \tau_N\mathbb{E}\big[R^{(kij)}(z)_{ji}\big(R^{(kij)}(z)^tR^{(kij)}(z)\big)_{ji}\big]+\sigma_N^2\mathbb{E}\big[R^{(kij)}(z)_{jj}\big(R^{(kij)}(z)^tR^{(kij)}(z)\big)_{ii}\big].
\end{equation}
When $i=j$, the first and second contributing terms become
\begin{equation*}\label{eq_bias_contributing_term3}
 s_N^2\mathbb{E}\big[\big(R^{(kii)}(z)^tR^{(kii)}(z)\big)_{ii}R^{(kii)}(z)_{ii}\big].
\end{equation*}

Note that, by \eqref{remove}, 
\begin{equation}\label{removebound}
\| R^{(kil)}(z)-R^{(k)}(z)\|=\|  R^{(kil)}(z)(1-\frac{1}{2}\delta_{il})(W_{il}E_{il}+\overline{W_{il}}E_{li})R^{(k)}(z)\| \leq 2\delta_N|\mathcal{I}z|^{-2}.
\end{equation}
Therefore, for any $p$ and $q$, 
\[\big|R^{(kij)}(z)_{pq}-R^{(k)}(z)_{pq}\big| \leq 2\delta_N|\Im z|^{-2}, \]
and
\[\big|(R^{(kij)}(z)^tR^{(kij)}(z))_{pq}-(R^{(k)}(z)^tR^{(k)}(z))_{pq}\big| \leq 4\delta_N|\Im z|^{-3}. \]
As a consequence, uniformly in $i,k$,
\begin{align*}
 \sum_{j\neq k}\mathbb{E}\big[W_{ij} & (R^{(k)}(z)^tR^{(k)}(z))_{ji}\big]\\
 & = \tau_N\mathbb{E}[\Tr\big(R^{(k)}(z)^tR^{(k)}(z)\big)R^{(k)}(z)_{ii}]+\sigma_N^2\mathbb{E}\big[\big(R^{(k)}(z)R^{(k)}(z)^tR^{(k)}(z)\big)_{ii}\big]\\
 & \quad + \tau_N\mathbb{E}\big[\big(^tR^{(k)}(z)R^{(k)}(z)^tR^{(k)}(z)\big)_{ii}\big]+\sigma_N^2\mathbb{E}[\Tr(R^{(k)}(z))(R^{(k)}(z)^tR^{(k)}(z))_{ii}] +O(\delta_N).
\end{align*}
Hence, uniformly in $i,k$,
\begin{align*}
 (z-\sigma_N^2 & \mathbb{E}[\Tr(R_N(z))]-D_{ii})\mathbb{E}[(R^{(k)}(z)^tR^{(k)}(z))_{ii}\big]\\
 & = \mathbb{E}[R^{(k)}(z)_{ii}]+\tau_N\mathbb{E}\big[\Tr\big(R^{(k)}(z)^tR^{(k)}(z)\big)R^{(k)}(z)_{ii}\big]+\sigma_N^2\mathbb{E}\big[\big(R^{(k)}(z)R^{(k)}(z)^tR^{(k)}(z)\big)_{ii}\big]\\
 & \quad +\tau_N\mathbb{E}\big[\big(^tR^{(k)}(z)R^{(k)}(z)^tR^{(k)}(z)\big)_{ii}\big]+O(\delta_N).
\end{align*}
Note that the last two terms are $O(N^{-1})$, uniformly in $i,k$. Moreover, it is known from \eqref{eq_approximation_Rk_tildeR} that $R^{(k)}(z)_{ii}$ may be replaced by $\tilde{R}(z)_{ii}$ at cost no more than $O(N^{-1/2})$. Therefore it follows 
\[\mathbb{E}[(R^{(k)}(z)^tR^{(k)}(z))_{ii}]-\tau_N\tilde{R}(z)_{ii}^2\mathbb{E}[\Tr(R^{(k)}(z)^tR^{(k)}(z))]=\tilde{R}(z)_{ii}^2+O(\delta_N),\]
uniformly in $i,k$. Then, uniformly in $k$, 
\[\big(1-\tau_N\sum_{i\neq k}\tilde{R}(z)_{ii}^2\big)\mathbb{E}[\Tr(R^{(k)}(z)^tR^{(k)}(z))]=\sum_{i\neq k}\tilde{R}(z)_{ii}^2+O(N\delta_N).\]
By Lemma \ref{lem_well_defined}, for $N$ large enough, one may invert $1-\tau_N\sum_{i\neq k}\tilde{R}(z)_{ii}^2$ which is bounded away from $0$, uniformly in $k$, which concludes the proof.
\end{proof}

Now going back to the formula \eqref{eq_bias}, we need to control the term $\sum_{i\neq k}\esp[R^{(k)}(z)_{ii}^2]$. We use again the fact (see \eqref{eq_approximation_Rk_tildeR}) that $R^{(k)}(z)_{ii}$ may be replaced by $\tilde{R}(z)_{ii}$ at cost no more than $O(N^{-1/2})$, uniformly in $k$. Therefore, uniformly in $k$, 
\[ \sum_{i\neq k}\esp[R^{(k)}(z)_{ii}^2]=\sum_{i\neq k}\tilde{R}(z)_{ii}^2+O(N^{1/2}). \]

The bias can now be computed entirely, from \eqref{eq_bias}.
\begin{align*}
 \esp [R_N(z)_{kk}] & = \tilde{R}(z)_{kk} + \tilde{R}(z)_{kk}^3\Big(s_N^2-\sigma_N^2 + \tau_N^2\frac{\sum_{i\neq k}\tilde{R}(z)_{ii}^2}{1-\tau_N\sum_{i\neq k}\tilde{R}(z)_{ii}^2} +\kappa_N\sum_{i\neq k}\tilde{R}(z)_{ii}^2\Big) + o(N^{-1}),
\end{align*}
where the $o(N^{-1})$ term is uniform in $k$.
Then
\begin{align*}
			\tilde{\beta}_N(z) & = \sum_{k=1}^N\tilde{R}(z)_{kk}^3\Big [s_N^2-\sigma_N^2+ \tau_N^2\frac{\sum_{i\neq k}\tilde{R}(z)_{ii}^2}{1-\tau_N\sum_{i\neq k}\tilde{R}(z)_{ii}^2} +\kappa_N\sum_{i\neq k}\tilde{R}(z)_{ii}^2 \Big] + o(1)\\
			& \underset{N\to +\infty}{\longrightarrow} \int_{\R}\frac{d\nu_{\infty}(x)}{(\omega(z)-x)^3}\Big[s^2-\sigma^2+\tau^2\frac{\int_{\R}\frac{d\nu_{\infty}(x)}{(\omega(z)-x)^2}}{1-\tau\int_{\R}\frac{d\nu_{\infty}(x)}{(\omega(z)-x)^2}} +\kappa\int_{\R}\frac{d\nu_{\infty}(x)}{(\omega(z)-x)^2} \Big]\\
			& =-\frac{\omega''(z)}{2\omega'(z)^3}\Big[\frac{s^2-\sigma^2-\tau}{\sigma^2}+\frac{\tau}{\sigma^2}\frac{\omega'(z)}{\frac{\tau}{\sigma^2}+(1-\frac{\tau}{\sigma^2})\omega'(z)}+\frac{\kappa}{\sigma^4}(1-\frac{1}{\omega'(z)}) \Big].
\end{align*}
\end{proof}

\begin{proof}[Proof of Proposition \ref{convb}]
The difference between $\beta_N(z)$ and $\tilde{\beta}_N(z)$ is
\begin{align*}
\beta_N(z)-\tilde{\beta}_N(z) & = N\big(G_{\nu_N}(\tilde{\omega}_N(z))-G_{\nu_N}(\omega_N(z))\big)\\
			  & = N\big(\omega_N(z)-\tilde{\omega}_N(z)\big)\int_{\R}\frac{d\nu_N(x)}{(\tilde{\omega}_N(z)-x)(\omega_N(z)-x)}\\
			  & = N\sigma_N^2\beta_N(z)\int_{\R}\frac{d\nu_N(x)}{(\tilde{\omega}_N(z)-x)(\omega_N(z)-x)}.
\end{align*}
Hence,
\begin{align*}
\beta_N(z) & = \big(1-N\sigma_N^2\int_{\R}\frac{d\nu_N(x)}{(\tilde{\omega}_N(z)-x)(\omega_N(z)-x)}\big)^{-1}\tilde{\beta}_N(z)\\
	& \underset{N \to +\infty}{\longrightarrow} \big(1-\sigma^2\int_{\R}\frac{d\nu_{\infty}(x)}{(\omega(z)-x)^2}\big)^{-1}\tilde{\beta}(z)=\frac{\tilde{\beta}(z)}{H'(\omega(z))}=\omega'(z)\tilde{\beta}(z).
\end{align*}

The limiting bias is therefore:

$$\beta(z)=-\frac{\omega''(z)}{2\omega'(z)^2}\Big[\frac{s^2-\sigma^2-\tau}{\sigma^2}+\frac{\tau}{\sigma^2}\frac{\omega'(z)}{\frac{\tau}{\sigma^2}+(1-\frac{\tau}{\sigma^2})\omega'(z)}+\frac{\kappa}{\sigma^4}(1-\frac{1}{\omega'(z)}) \Big].$$
\end{proof}

\subsection{Bias of linear spectral statistics of deformed generalized Wigner matrices}
The preceding subsection proves that $b_N(\varphi_z)=\beta_N(z)$ converges to $b(\varphi_z):=\beta(z)$. 
The purpose of this subsection is to extend this convergence result to $\varphi$ in a larger class of functions. 

We extend the convergence of $b_N(\varphi)$ to $\varphi \in \mathcal{H}_s$ for some $s>0$ by a strategy similar to Shcherbina's one for the fluctuations.

\begin{lem}\label{lem:extension_bias}
Let $(\mathcal{L},\| \, \|)$ be a normed vector space and 
$(b_N)_{N\geq 1}$ be a sequence of linear forms on $\mathcal{L}$. Assume that:
\begin{itemize}
\item there exists $C>0$ such that for any $\varphi \in \mathcal{L}$ and large enough $N\geq 1$, one has 
\[ |b_N(\varphi)| \leqslant C\|\varphi\|,\]
\item there exists a dense linear subspace $\mathcal{L}_1 \subset \mathcal{L}$ and a linear form $b : \mathcal{L}_1 \to \C$ such that $b_N(\varphi) \CV{N} b(\varphi)$, for all $\varphi \in \mathcal{L}_1$.
\end{itemize}
Then $b$ admits a unique continuous (linear) extension to $\mathcal{L}$ and 
\[ b_N(\varphi) \CV{N} b(\varphi)\]
holds for all $\varphi \in \mathcal{L}$.
\end{lem}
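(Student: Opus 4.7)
The plan is to follow the standard density/uniform-bound extension argument (often called the BLT or bounded linear transformation theorem, combined with a three-epsilon argument for the convergence).

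First I would establish that $b : \mathcal{L}_1 \to \mathbb{R}$ is bounded with norm at most $C$. Indeed, for any $\varphi \in \mathcal{L}_1$ one has $|b(\varphi)| = \lim_{N\to +\infty}|b_N(\varphi)| \leq C\|\varphi\|$ by passing to the limit in the uniform bound. Since $\mathcal{L}_1$ is dense in $\mathcal{L}$ and the target space $\mathbb{R}$ is complete, $b$ then admits a unique continuous linear extension $\bar{b}$ to all of $\mathcal{L}$, still satisfying $|\bar{b}(\varphi)| \leq C\|\varphi\|$. This extension is the only candidate for the limit on $\mathcal{L}$, which takes care of the uniqueness and existence of the extended functional.

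Next I would prove $b_N(\varphi) \to \bar{b}(\varphi)$ for every $\varphi \in \mathcal{L}$ by a three-epsilon argument. Fix $\varepsilon > 0$ and, using density of $\mathcal{L}_1$, choose $\varphi_1 \in \mathcal{L}_1$ with $\|\varphi - \varphi_1\| \leq \varepsilon/(3C)$. For $N$ large enough that the uniform bound holds, we split
\begin{equation*}
|b_N(\varphi)-\bar{b}(\varphi)| \leq |b_N(\varphi-\varphi_1)| + |b_N(\varphi_1)-b(\varphi_1)| + |\bar{b}(\varphi_1-\varphi)|.
\end{equation*}
The first and third terms are each bounded by $C\|\varphi-\varphi_1\| \leq \varepsilon/3$ using the uniform bound and the bound on $\bar{b}$, while the middle term tends to $0$ as $N \to +\infty$ by the convergence hypothesis on $\mathcal{L}_1$. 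Hence $\limsup_N |b_N(\varphi)-\bar{b}(\varphi)| \leq 2\varepsilon/3 < \varepsilon$, and letting $\varepsilon \to 0$ gives the conclusion.

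There is no real obstacle here; the argument is a routine application of the BLT theorem together with a three-epsilon splitting. The only point that deserves mild care is that the uniform bound on $b_N$ is assumed only for large enough $N$, but this is harmless since the convergence statement is asymptotic in $N$.
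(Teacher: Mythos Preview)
Your proof is correct and follows essentially the same approach as the paper: first bound $b$ on $\mathcal{L}_1$ by passing to the limit in the uniform estimate, extend by density (BLT), and then conclude pointwise convergence on all of $\mathcal{L}$ via the same three-term splitting $|b_N(\varphi)-b(\varphi)| \leq |b_N(\varphi-\varphi_1)| + |b_N(\varphi_1)-b(\varphi_1)| + |b(\varphi_1-\varphi)|$. The only cosmetic difference is that the paper bounds the outer terms by $2C\varepsilon$ rather than normalizing $\varepsilon$ by $3C$.
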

\begin{proof}
For $\varphi \in \mathcal{L}_1$, by letting $N$ tend to $+\infty$ in $|b_N(\varphi)| \leqslant C\|\varphi\|$, one obtains that $|b(\varphi)| \leqslant C\|\varphi\|$: $b$ is a continuous linear form on $\mathcal{L}_1$. Since $\mathcal{L}_1$ is dense in $\mathcal{L}$, $b$ can be uniquely extended into a continuous linear form on $\mathcal{L}$. 
Note that $|b(\varphi)|\leqslant C\|\varphi\|$ is true for all $\varphi \in \mathcal{L}$.
Let $\varphi \in \mathcal{L}$ and $\varepsilon >0$. By density, there exists $\varphi_1 \in \mathcal{L}_1$ such that $\|\varphi-\varphi_1\| \leqslant \varepsilon$. By linearity, for large enough $N \geq 1$, 
 \[|b_N(\varphi)-b(\varphi)| \leqslant 2C\varepsilon+|b_N(\varphi_1)-b(\varphi_1)|.\]
 But $b_N(\varphi_1) \CV{N} b(\varphi_1)$ and therefore $b_N(\varphi) \CV{N} b(\varphi)$.
\end{proof}
Here
\[\mathcal{L}_1=\text{span}\{\varphi_z:x\mapsto (z-x)^{-1}; z\in \C\setminus \R\}.\]
Recall from Section \ref{sec:Hs} that it is dense in the normed vector space $\mathcal{L}=\mathcal{H}_s,\, s>0.$ 

\begin{prop}\label{prop_bound_bias_Hs}
 Suppose $s>\frac{1}{2}$. Then, for all $\varphi \in \mathcal{H}_s$,
 \[|b_N(\varphi)| \leqslant  \frac{\|\varphi\|_{\mathcal{H}_s}}{2\pi^{3/2}\Gamma(2s)^{1/2}}\bigg(\int_0^{+\infty} e^{-y}y^{2s-1}\int_{\R}|\beta_N(x+iy)|^2dxdy\bigg)^{1/2}. \]
\end{prop}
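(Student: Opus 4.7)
The plan is to follow Johansson's integral representation: write $\varphi$ via its Fourier transform, replace the linear spectral statistic with an integral against $\hat{\varphi}$, and then use Plancherel to recognize the $\mathcal{H}_s$ norm after applying Cauchy--Schwarz with the weight $(1+2|t|)^{\pm s}$. The key ingredient is to relate the Fourier transform (in $x$) of $\beta_N(\,\cdot\,+iy)$ to $\hat{\sigma}_N$, where $\sigma_N:=\mathbb{E}\mu_N-\rho_N$ is the real signed measure whose total mass is $0$.

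First, for $\varphi \in \mathcal{H}_s$ with $s>1/2$, the Fourier inversion formula gives
\[
b_N(\varphi)=N\int_{\mathbb{R}}\varphi\, d\sigma_N=\frac{N}{2\pi}\int_{\mathbb{R}}\hat{\varphi}(t)\,\overline{\hat{\sigma}_N(t)}\,dt,
\]
where we used that $\sigma_N$ is real-valued to write $\check{\sigma}_N(t)=\overline{\hat{\sigma}_N(t)}$. Splitting the integrand as $\hat{\varphi}(t)(1+2|t|)^s\cdot(1+2|t|)^{-s}\overline{\hat{\sigma}_N(t)}$ and applying Cauchy--Schwarz yields
\[
|b_N(\varphi)|^2\leq \frac{N^2}{4\pi^2}\|\varphi\|_{\mathcal{H}_s}^2\int_{\mathbb{R}}(1+2|t|)^{-2s}|\hat{\sigma}_N(t)|^2\,dt.
\]
Since $\sigma_N$ is real, $|\hat{\sigma}_N|$ is even, so the right-hand integral equals $2\int_0^{+\infty}(1+2t)^{-2s}|\hat{\sigma}_N(t)|^2 dt$.

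The second step is to match this integral against the double integral in the statement. Starting from $\frac{1}{w}=-i\int_0^{+\infty}e^{isw}\,ds$ valid for $\operatorname{Im}w>0$, one obtains for $y>0$,
\[
G_\mu(x+iy)=-i\int_0^{+\infty}e^{-sy}e^{isx}\hat{\mu}(s)\,ds,
\]
so the Fourier transform (in the $x$-variable) of $x\mapsto \beta_N(x+iy)=N(\mathbb{E}G_{\mu_N}(x+iy)-G_{\rho_N}(x+iy))$ equals $-2\pi iN e^{-ty}\hat{\sigma}_N(t)\mathbf{1}_{t>0}$. Parseval's identity then gives
\[
\int_{\mathbb{R}}|\beta_N(x+iy)|^2\,dx=2\pi N^2\int_0^{+\infty}e^{-2ty}|\hat{\sigma}_N(t)|^2\,dt,
\]
and Fubini with $\int_0^{+\infty}e^{-y(1+2t)}y^{2s-1}\,dy=\Gamma(2s)(1+2t)^{-2s}$ produces the identity
\[
\int_0^{+\infty}e^{-y}y^{2s-1}\!\int_{\mathbb{R}}|\beta_N(x+iy)|^2\,dx\,dy=2\pi N^2\Gamma(2s)\int_0^{+\infty}\frac{|\hat{\sigma}_N(t)|^2}{(1+2t)^{2s}}\,dt.
\]

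Finally, inserting this into the Cauchy--Schwarz bound gives
\[
|b_N(\varphi)|^2\leq \frac{N^2}{4\pi^2}\|\varphi\|_{\mathcal{H}_s}^2\cdot 2\cdot\frac{1}{2\pi N^2\Gamma(2s)}\int_0^{+\infty}e^{-y}y^{2s-1}\!\int_{\mathbb{R}}|\beta_N(x+iy)|^2\,dx\,dy,
\]
and taking square roots yields the announced constant $\frac{1}{2\pi^{3/2}\Gamma(2s)^{1/2}}$. The whole argument is essentially bookkeeping; the only real content is the Fourier-side identification of $\beta_N(\,\cdot\,+iy)$, which is where one must be careful with sign and constant conventions to avoid dropping a factor of $2\pi$.
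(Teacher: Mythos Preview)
Your proof is correct and follows essentially the same route as the paper's: Cauchy--Schwarz with the weight $(1+2|t|)^{\pm s}$, the Gamma-function identity $\Gamma(2s)(1+2t)^{-2s}=\int_0^\infty e^{-(1+2t)y}y^{2s-1}dy$, and Parseval applied to $\beta_N(\cdot+iy)$. The only cosmetic differences are that the paper first works on the Schwartz space and extends by density, and that it verifies $\beta_N(\cdot+iy)\in L^2$ by a direct Jensen-type estimate rather than reading it off from the integral representation of $1/w$; your route is slightly slicker on that last point but you should mention explicitly that $t\mapsto e^{-ty}\hat{\sigma}_N(t)\mathbf 1_{t>0}$ lies in $L^1\cap L^2$ (since $\hat{\sigma}_N$ is bounded), which is what justifies invoking Parseval.
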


This proposition provides a bound on $|b_N(\varphi)|$ for $\varphi \in \mathcal{H}_s$ of the expected form $C\|\varphi\|_{\mathcal{H}_s}$, where $C$ can be written as an integral of $\beta_N(x+iy)$. 

\begin{proof}
Set $\mu'_N:=N(\esp[\mu_N]-\rho_N)$. It is a signed measure, which writes as the difference of two finite positive measures. For bounded measurable $\varphi$, $b_N(\varphi)=\int_{\R}\varphi d\mu'_N$.
The restriction $\mathcal{D}_N$ of $b_N$ to the Schwartz space $\mathcal{S}$ is a tempered distribution. 
Its Fourier transform $\hat{\mathcal{D}}_N$ is also a tempered distribution defined by 
$$\langle \hat{\mathcal{D}}_N,f\rangle=\int_{\R}f(t)\int_{\R}e^{-itx}d\mu'_N(x)dt,\quad f\in \mathcal{S},$$
and satisfying 
\begin{equation}\label{inverseFourier}
\langle \mathcal{D}_N,f\rangle=\frac{1}{2\pi}\langle \hat{\mathcal{D}}_N,\hat{f}(-\cdot)\rangle,\quad f \in \mathcal{S}.
\end{equation}
In what follows, set $\alpha_N(t)=\int_{\R}e^{-itx}d\mu'_N(x)$. 
Remark that $\alpha_N:\mathbb{R}\to \mathbb{C}$ is a bounded continuous function. Then, for $\varphi \in \mathcal{S}$,
\begin{align*}
 |b_N(\varphi)| & = \frac{1}{2\pi}\bigg|\int_{\R}\hat{\varphi}(-t)\alpha_N(t)dt\bigg|\\
 & = \frac{1}{2\pi}\bigg|\int_{\R}(1+2|t|)^s\hat{\varphi}(-t)(1+2|t|)^{-s}\alpha_N(t)dt\bigg|\\
 & \leqslant \frac{1}{2\pi}\bigg(\int_{\R}(1+2|t|)^{2s}|\hat{\varphi}(t)|^2dt \int_{\R}(1+2|t|)^{-2s}|\alpha_N(t)|^2dt\bigg)^{1/2}\\
 & \leqslant \frac{1}{2\pi}\|\varphi\|_{\mathcal{H}_s}\bigg(\int_{\R}(1+2|t|)^{-2s}|\alpha_N(t)|^2dt\bigg)^{1/2}.
\end{align*}
As $\alpha_N$ is bounded and $s>\frac{1}{2}$, $\int_{\R}(1+2|t|)^{-2s}|\alpha_N(t)|^2dt<+\infty$. We just observed that the norm of the restriction of $b_N$ to $\mathcal{S}$ is bounded by $\int_{\R}(1+2|t|)^{-2s}|\alpha_N(t)|^2dt$. As $\mathcal{S}$ is dense in $\mathcal{H}_s$, the norm of $b_N: \mathcal{H}_s \to \mathbb{C}$ is the same.
Remark that 
\[ \Gamma(2s)(1+2|t|)^{-2s}=\int_0^{+\infty}e^{-(1+2|t|)y}y^{2s-1}dy.\]
As a consequence, by Fubini-Tonelli theorem, 
\begin{align*}
 \int_{\R}(1+2|t|)^{-2s}|\alpha_N(t)|^2dt & = \frac{1}{\Gamma(2s)}\int_{\R}\int_0^{+\infty}e^{-(1+2|t|)y}y^{2s-1}dy|\alpha_N(t)|^2dt\\
 & = \frac{1}{\Gamma(2s)}\int_0^{+\infty}e^{-y}y^{2s-1}\int_{\R}e^{-2|t|y}|\alpha_N(t)|^2dtdy\\
 & = \frac{2}{\Gamma(2s)}\int_0^{+\infty}e^{-y}y^{2s-1}\int_{0}^{+\infty}e^{-2ty}|\alpha_N(t)|^2dtdy,
\end{align*}
as $\alpha_N(-t)=\bar{\alpha}_N(t)$. Use now that $\beta_N(\cdot+iy) \in L^2$ with Fourier transform 
$$\widehat{\beta_N(\cdot+iy)}(t)=-2i\pi\alpha_N(t)e^{-ty}\mathbbm{1}_{(0,+\infty)}(t)$$
(this will be proved at the end). Then, applying Parseval identity, 
\begin{align*}
 \int_{\R}(1+2|t|)^{-2s}|\alpha_N(t)|^2dt & =  \frac{1}{\Gamma(2s)2\pi^2}\int_0^{+\infty}e^{-y}y^{2s-1}\int_{\R}|\widehat{\beta_N(\cdot+iy)}(t)|^2dtdy\\
 & = \frac{1}{\Gamma(2s)\pi}\int_0^{+\infty}e^{-y}y^{2s-1}\int_{\R}|\beta_N(x+iy)|^2dxdy.
\end{align*}
Consequently, for all $\varphi \in \mathcal{H}_s$,
\[|b_N(\varphi)| \leqslant \frac{\|\varphi\|_{\mathcal{H}_s}}{2\pi^{3/2}\Gamma(2s)^{1/2}}\Big(\int_0^{+\infty}e^{-y}y^{2s-1}\int_{\R}|\beta_N(x+iy)|^2dxdy\Big)^{1/2}. \]

It remains to prove that $\beta_N(\cdot+iy) \in L^2$ with Fourier transform 
$$\widehat{\beta_N(\cdot+iy)}(t)=-2i\pi\alpha_N(t)e^{-ty}\mathbbm{1}_{(0,+\infty)}(t).$$
\begin{align*}
 \int_{\R}|\beta_N(x+iy)|^2dx & = \int_{\R}\Big|\int_{\R}\frac{d\mu'_N(t)}{x+iy-t}\Big|^2dx\\
 & \leqslant \int_{\R}\Big(\int_{\R}\frac{d|\mu'_N|(t)}{|x+iy-t|}\Big)^2dx \quad \text{where} \ |\mu'_N|=N(\esp[\mu_N]+\rho_N)\\
 & \leqslant 4N^2\int_{\R}\Big(\int_{\R}\frac{1}{|x+iy-t|}\frac{d|\mu'_N|(t)}{2N}\Big)^2dx\\
 & \leqslant 4N^2\int_{\R}\int_{\R}\frac{1}{|x+iy-t|^2}\frac{d|\mu'_N|(t)}{2N}dx \quad \text{by Jensen inequality}\\
 & \leqslant 2N\int_{\R}\int_{\R}\frac{dx}{(x-t)^2+y^2}d|\mu'_N|(t) \quad \text{by Fubini-Tonelli theorem}\\
 & \leqslant 4N^2\int_{\R}\frac{dx}{x^2+y^2} < +\infty.
\end{align*}
Therefore $\widehat{\beta_N(\cdot+iy)}$ exists in $L^2$. Set 
$f(\xi)=-2i\pi e^{-y\xi}\alpha_N(\xi)\mathbbm{1}_{(0,+\infty)}(\xi)$ for $\xi \in \R$. 
As $\alpha_N$ is bounded on $\R$, $f \in L^1\cap L^2$. We compute its Fourier transform. 
For $x \in \R$,
\begin{align*}
 \hat{f}(x) & = -2i\pi\int_{\R}e^{-y\xi}\alpha_N(\xi)\mathbbm{1}_{(0,+\infty)}(\xi)e^{-ix\xi}d\xi\\
 & = -2i\pi\int_0^{+\infty}e^{-y\xi}\int_{\R}e^{-it\xi}d\mu'_N(t)e^{-ix\xi}d\xi\\
 & = -2i\pi\int_{\R}\int_0^{+\infty}e^{-(y+i(t+x))\xi}d\xi d\mu'_N(t)\\
 & = -2i\pi\int_{\R}\frac{d\mu'_N(t)}{y+i(t+x)}\\
 & = 2\pi\int_{\R}\frac{d\mu'_N(t)}{-x+iy-t}\\
 & = 2\pi \beta_N(-x+iy).
\end{align*}
Hence $\beta_N(\cdot+iy)=\frac{1}{2\pi}\hat{f}(-\cdot)$ and finally $\widehat{\beta_N(\cdot+iy)}=f$ in $L^2$, which concludes.
\end{proof}
Notice that $\tilde{R}_{kk}$ is the Stieltjes transform of a probability measure $\tilde{\mu}_k$. 
Hence, for $y>0$ and $p\geq 1$, by Jensen inequality and Fubini theorem,
\begin{align*}
\int_{\R}|\tilde{R}(x+iy)_{kk}|^pdx & = \int_{\R}\Big|\int_{\R}\frac{d\tilde{\mu}_{k}(t)}{x+iy-t}\Big|^pdx\\
				    & \leq \int_{\R}\int_{\R}\frac{d\tilde{\mu}_{k}(t)}{|x+iy-t|^p}dx\\
				    & \leq \int_{\R}\int_{\R}\frac{dx}{\big((x-t)^2+y^2\big)^{p/2}}d\tilde{\mu}_{k}(t)\\
				    & \leq y^{1-p}\int_{\R}\int_{\R}\frac{du}{(1+u^2)^{p/2}}d\tilde{\mu}_{k}(t)\\
				    & \leq y^{1-p}\int_{\R}\frac{du}{(1+u^2)^{p/2}}.
\end{align*}
It follows, by Proposition \ref{boundb} and Cauchy-Schwarz inequality, that
\begin{align*}
 \int_{\R}|\beta_N(x+iy)|^2dx & \leqslant N^{-2}|P_N(y^{-1})|^2\int_{\R}\Big(\sum_{k=1}^N|\tilde{R}(x+iy)_{kk}|^2\Big)^2dx\\
				 & \leqslant N^{-1}|P_N(y^{-1})|^2\sum_{k=1}^N\int_{\R}|\tilde{R}(x+iy)_{kk}|^4dx\\
				 & \leqslant y^{-3}|P_N(y^{-1})|^2\int_{\R}\frac{du}{(1+u^2)^{2}}.
\end{align*}
Then,
\begin{align*}
 |b_N(\varphi)| &\leqslant \frac{\|\varphi\|_{\mathcal{H}_s}}{2\pi^{3/2}\Gamma(2s)^{1/2}}\bigg(\int_0^{+\infty} e^{-y}y^{2s-1}\int_{\R}|\beta_N(x+iy)|^2dxdy\bigg)^{1/2}\\
		   &\leqslant \frac{\|\varphi\|_{\mathcal{H}_s}}{2\pi^{3/2}\Gamma(2s)^{1/2}}\bigg(\int_0^{+\infty} e^{-y}y^{2s-4}|P_N(y^{-1})|^2\int_{\R}\frac{du}{(1+u^2)^2}dy\bigg)^{1/2}\\
  & \leqslant \Big(\int_{\R}\frac{du}{(1+u^2)^2}\Big)^{1/2}\frac{\|\varphi\|_{\mathcal{H}_s}}{2\pi^{3/2}\Gamma(2s)^{1/2}}\Big(\int_0^{+\infty}e^{-y}y^{2s-4}|P_N(y^{-1})|^2dy\Big)^{1/2}\\
  & \leqslant C\|\varphi\|_{\mathcal{H}_s},
\end{align*}
where \[C:=\sup_{N\geq 1}\frac{1}{2\pi^{3/2}\Gamma(2s)^{1/2}}\Big(\int_{\R}\frac{du}{(1+u^2)^2}\Big)^{1/2}\Big(\int_0^{+\infty}e^{-y}y^{2s-4}|P_N(y^{-1})|^2dy\Big)^{1/2}<+\infty\]
as soon as $s>13/2$ (recall that $P_N$ is of degree $5$ with bounded coefficients). 
Therefore, for $s>13/2$, $b$ may be extended to $\mathcal{H}_s$ and 
$b_N(\varphi)$ converges to $b(\varphi)$ for every $\varphi \in \mathcal{H}_s$. 

\section{Fluctuations of the Stieltjes transform of the spectral measure of deformed Wigner matrices} \label{CLT}
Let \[\mathcal{N}_N(\varphi):=\sum_{\lambda \in \text{sp}(X_N)}\varphi(\lambda)=N\int_{\mathbb{R}}\varphi(x)\mu_N(dx).\]
We have seen previously in Section \ref{sec_variance_bound} that $(\Var \mathcal{N}_N(\varphi))_{N\geq 1}$ is bounded for 
$\varphi\in \mathcal{L}_1=\text{span}\{\varphi_z:x\mapsto (z-x)^{-1}; z\in \C\setminus \R\}$.
In this section, we will give a proof of the convergence in finite-dimensional distributions of the complex process $(\mathcal{N}_N(\varphi)-\mathbb{E}[\mathcal{N}_N(\varphi)], \, \varphi\in \mathcal{L}_1)$ to a centred complex Gaussian process, based on Theorem \ref{thm_CLT_martingale}.

\subsection{Reduction of the problem} \label{subsectionreduction}
 
One has to prove that, for a fixed $\varphi\in \mathcal{L}_1$, $\mathcal{N}_N(\varphi)-\mathbb{E}[\mathcal{N}_N(\varphi)]$ converges in distribution to a complex Gaussian variable $Z\sim \mathcal{N}_{\mathbb{C}}(0,V[\varphi],W[\varphi])$. 
Notice that 
$$\mathcal{N}_N(\varphi)-\mathbb{E}[\mathcal{N}_N(\varphi)]=\sum_{k=1}^N (\mathbb{E}_{\leq k}-\mathbb{E}_{\leq k-1})[\mathcal{N}_N(\varphi)].$$
The random variable $\mathcal{N}_N(\varphi)$ being bounded, $(\mathbb{E}_{\leq k}[\mathcal{N}_N(\varphi)])_{k\geq 1}$ is a square integrable complex martingale, hence $(\mathbb{E}_{\leq k}[\mathcal{N}_N(\varphi)]-\mathbb{E}_{\leq k-1}[\mathcal{N}_N(\varphi)])_{k\geq 1}$ is a martingale difference. Our strategy is to apply the central limit theorem for sums of martingale differences. More precisely, we will decompose $(\mathbb{E}_{\leq k}-\mathbb{E}_{\leq k-1})[\mathcal{N}_N(\varphi)]$ in two parts and apply Theorem \ref{thm_CLT_martingale} to the first part.

\begin{prop}
For $1 \leq k \leq N$,
\[(\mathbb{E}_{\leq k}-\mathbb{E}_{\leq k-1})[\mathcal{N}_N(\varphi)]=\Delta_k^{(N)}+\varepsilon_k^{(N)},\]
where $\Delta_k^{(N)}$ is a linear combination of 
$\esp_{\leq k}[-\frac{\partial}{\partial z}\phi_k^{(N)}(z)],\, z\in \mathbb{C}\setminus\mathbb{R}$, with
\[ \phi_k^{(N)}(z):=\frac{W_{kk}+C_k^{(k)*}R^{(k)}(z)C_k^{(k)}-\sigma_N^2\Tr(R^{(k)}(z))}{z-D_{kk}-\sigma_N^2\Tr(R^{(k)}(z))},\quad z\in \mathbb{C}\setminus\mathbb{R},\]
and $\sum_{k=1}^N\varepsilon_k^{(N)} \underset{N \to +\infty}{\longrightarrow} 0$ in probability.
\end{prop}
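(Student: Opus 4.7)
The plan is to reduce by linearity to the case $\varphi=\varphi_z$ for a single $z\in\mathbb{C}\setminus\mathbb{R}$, so that $\mathcal{N}_N(\varphi_z)=\Tr R_N(z)$. Starting from the decomposition \eqref{psi} obtained via Schur's formula and the observation \eqref{vanish} that the first two summands on the right-hand side of \eqref{psi} do not involve the $k$-th row/column of $W_N$, I get
\[(\mathbb{E}_{\leq k}-\mathbb{E}_{\leq k-1})[\Tr R_N(z)] = (\mathbb{E}_{\leq k}-\mathbb{E}_{\leq k-1})[\Psi_k].\]

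The next step is to recognize $\Psi_k$ as essentially $-\partial_z \phi_k^{(N)}(z)$ plus a higher-order residual. Writing $f(z):=W_{kk}+C_k^{(k)*}R^{(k)}(z)C_k^{(k)}-\sigma_N^2\Tr(R^{(k)}(z))$ and $g(z):=z-D_{kk}-\sigma_N^2\Tr(R^{(k)}(z))$, so that $\phi_k^{(N)}(z)=f(z)/g(z)$, a direct differentiation yields
\[ -\frac{\partial}{\partial z}\phi_k^{(N)}(z) = \frac{g'(z)f(z)}{g(z)^2} + \frac{u(z)}{g(z)},\qquad u(z):=C_k^{(k)*}R^{(k)}(z)^2C_k^{(k)}-\sigma_N^2\Tr(R^{(k)}(z)^2). \]
Comparing with the defining expression of $\Psi_k$, and using the algebraic identity $(g(z)-f(z))^{-1}-g(z)^{-1}=f(z)R_N(z)_{kk}/g(z)$ (a rewriting of the Schur identity $R_N(z)_{kk}=(g(z)-f(z))^{-1}$), I obtain
\[ \Psi_k - \Big(-\frac{\partial}{\partial z}\phi_k^{(N)}(z)\Big) = \frac{g'(z)\,f(z)^2\,R_N(z)_{kk}}{g(z)^2} + \frac{u(z)\,f(z)\,R_N(z)_{kk}}{g(z)}. \]
I then set $\Delta_k^{(N)}:=(\mathbb{E}_{\leq k}-\mathbb{E}_{\leq k-1})[-\partial_z \phi_k^{(N)}(z)]$ and $\varepsilon_k^{(N)}$ equal to $(\mathbb{E}_{\leq k}-\mathbb{E}_{\leq k-1})$ applied to the residual. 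To identify $\Delta_k^{(N)}$ with $\mathbb{E}_{\leq k}[-\partial_z\phi_k^{(N)}(z)]$, I check $\mathbb{E}_{\leq k-1}[-\partial_z\phi_k^{(N)}(z)]=0$: since $g$, $g'$ and $u$ depend on $W_N$ only through $R^{(k)}$ (which excludes row/column $k$), conditioning on the sigma-field generated by $\mathcal{F}_{k-1}$ and $R^{(k)}$ reduces the problem to $\mathbb{E}[f(z)\mid R^{(k)}]=0$, which holds by centredness of $W_{kk}$ and the trace identity $\mathbb{E}[C_k^{(k)*}R^{(k)}C_k^{(k)}\mid R^{(k)}]=\sigma_N^2\Tr R^{(k)}(z)$.

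Finally, to obtain $\sum_{k=1}^N \varepsilon_k^{(N)}\to 0$ in probability, I prove the stronger statement of $L^2$-convergence. By martingale orthogonality (Lemma \ref{martingalevariance}),
\[ \mathbb{E}\Big|\sum_{k=1}^N \varepsilon_k^{(N)}\Big|^2 = \sum_{k=1}^N \mathbb{E}|\varepsilon_k^{(N)}|^2 \leq 4\sum_{k=1}^N \mathbb{E}\Big|\Psi_k+\frac{\partial}{\partial z}\phi_k^{(N)}(z)\Big|^2. \]
Using $|R_N(z)_{kk}|,|g(z)^{-1}|\leq|\Im z|^{-1}$, the bound $|g'(z)|\leq 1+N\sigma_N^2|\Im z|^{-2}=O(1)$, Cauchy-Schwarz to split $\mathbb{E}|f^2 R_N(z)_{kk}|^2$ and $\mathbb{E}|u f R_N(z)_{kk}|^2$, and Lemma \ref{lem_moment_quadratic_forms} (combined with the truncation $|W_{ij}|\leq\delta_N$ assumed in this subsection and the moment bound in \eqref{eq_improved_bound}) to bound $\mathbb{E}|f(z)|^4$ and $\mathbb{E}|u(z)|^4$ by $o(N^{-1})$, I obtain $\mathbb{E}|\Psi_k+\partial_z\phi_k^{(N)}(z)|^2=o(N^{-1})$ uniformly in $k$, hence $\sum_k\mathbb{E}|\varepsilon_k^{(N)}|^2=o(1)$. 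The main obstacle is this last moment estimate: one has to combine the $4(1+\varepsilon)$ moment assumption with the slow decay of $\delta_N$ in order to make the subleading terms in Lemma \ref{lem_moment_quadratic_forms} small enough that, summed over $k$, the cumulative error still vanishes.
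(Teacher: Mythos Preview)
Your argument is essentially the paper's own proof: the residual you isolate, $\frac{g'f^2 R_N(z)_{kk}}{g^2}+\frac{uf R_N(z)_{kk}}{g}$, is exactly the paper's $\varepsilon_{k,1}^{(N)}+\varepsilon_{k,2}^{(N)}+\varepsilon_{k,3}^{(N)}$ regrouped (use $R_N(z)_{kk}=1/(g-f)$ and $\frac{uf}{(g-f)g}=\frac{uf^2}{(g-f)g^2}+\frac{uf}{g^2}$), and your moment bounds via Lemma~\ref{lem_moment_quadratic_forms} under the truncation $|W_{ij}|\le\delta_N$ match the paper's subsequent lemma.

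One slip to fix: in showing $\mathbb{E}_{\leq k-1}\big[-\partial_z\phi_k^{(N)}(z)\big]=0$, you write that ``$g$, $g'$ and $u$ depend on $W_N$ only through $R^{(k)}$'' and reduce to $\mathbb{E}[f\mid R^{(k)}]=0$. This is wrong on two counts: $u(z)=C_k^{(k)*}R^{(k)}(z)^2C_k^{(k)}-\sigma_N^2\Tr(R^{(k)}(z)^2)$ \emph{does} depend on the $k$-th column $C_k^{(k)}$, and the summand $u/g$ contains no factor of $f$ at all. The correct (and equally short) argument is the paper's: integrate first over the $k$-th column, using that $g,g'$ are functions of $R^{(k)}$ alone while $\mathbb{E}_k[f]=0$ and $\mathbb{E}_k[u]=0$ (the latter by the same trace identity applied to $R^{(k)}(z)^2$); since column $k$ is independent of $\mathcal{F}_{k-1}$, this gives $\mathbb{E}_{\leq k-1}\big[g'f/g^2\big]=\mathbb{E}_{\leq k-1}\big[u/g\big]=0$.
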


\begin{proof}
For $z\in \mathbb{C}\setminus\mathbb{R}$, decomposing further the first term of $\Psi_k$ in \eqref{psi} yields:
\begin{align*}
& \frac{\big(1+C_k^{(k)*}R^{(k)}(z)^2C_k^{(k)}\big)\big(W_{kk}+C_k^{(k)*}R^{(k)}(z)C_k^{(k)}-\sigma_N^2\Tr(R^{(k)}(z))\big)}{\big(z-W_{kk}-D_{kk}-C_k^{(k)*}R^{(k)}(z)C_k^{(k)}\big)\big(z-D_{kk}-\sigma_N^2\Tr(R^{(k)}(z))\big)}\\
& = \frac{\big(1+\sigma_N^2\Tr(R^{(k)}(z)^2)\big)\big(W_{kk}+C_k^{(k)*}R^{(k)}(z)C_k^{(k)}-\sigma_N^2\Tr(R^{(k)}(z))\big)}{\big(z-D_{kk}-\sigma_N^2\Tr(R^{(k)}(z))\big)^2}+\varepsilon_{k,1}^{(N)}(z)+\varepsilon_{k,2}^{(N)}(z)+\varepsilon_{k,3}^{(N)}(z),
\end{align*}
where
\begin{align*}
\varepsilon_{k,1}^{(N)}(z) & :=\frac{\big(1+\sigma_N^2\Tr(R^{(k)}(z)^2)\big)\big(W_{kk}+C_k^{(k)*}R^{(k)}(z)C_k^{(k)}-\sigma_N^2\Tr(R^{(k)}(z))\big)^2}{\big(z-W_{kk}-D_{kk}-C_k^{(k)*}R^{(k)}(z)C_k^{(k)}\big)\big(z-D_{kk}-\sigma_N^2\Tr(R^{(k)}(z))\big)^2},\\
\varepsilon_{k,2}^{(N)}(z) & :=\frac{\big(C_k^{(k)*}R^{(k)}(z)^2C_k^{(k)}-\sigma_N^2\Tr(R^{(k)}(z)^2)\big)\big(W_{kk}+C_k^{(k)*}R^{(k)}(z)C_k^{(k)}-\sigma_N^2\Tr(R^{(k)}(z))\big)^2}{\big(z-W_{kk}-D_{kk}-C_k^{(k)*}R^{(k)}(z)C_k^{(k)}\big)\big(z-D_{kk}-\sigma_N^2\Tr(R^{(k)}(z))\big)^2},\\
\varepsilon_{k,3}^{(N)}(z) & :=\frac{\big(C_k^{(k)*}R^{(k)}(z)^2C_k^{(k)}-\sigma_N^2\Tr(R^{(k)}(z)^2)\big)\big(W_{kk}+C_k^{(k)*}R^{(k)}(z)C_k^{(k)}-\sigma_N^2\Tr(R^{(k)}(z))\big)}{\big(z-D_{kk}-\sigma_N^2\Tr(R^{(k)}(z))\big)^2}.
\end{align*}
It follows that
\begin{align*}
\Tr(R_N(z)) & = \Tr(R^{(k)}(z))+\frac{1+\sigma_N^2\Tr(R^{(k)}(z)^2)}{z-D_{kk}-\sigma_N^2\Tr(R^{(k)}(z))}\\
& + \frac{\big(1+\sigma_N^2\Tr(R^{(k)}(z)^2)\big)\big(W_{kk}+C_k^{(k)*}R^{(k)}(z)C_k^{(k)}-\sigma_N^2\Tr(R^{(k)}(z))\big)}{\big(z-D_{kk}-\sigma_N^2\Tr(R^{(k)}(z))\big)^2}\\
& + \frac{C_k^{(k)*}R^{(k)}(z)^2C_k^{(k)}-\sigma_N^2\Tr(R^{(k)}(z)^2)}{z-D_{kk}-\sigma_N^2\Tr(R^{(k)}(z))}+\varepsilon_{k,1}^{(N)}(z)+\varepsilon_{k,2}^{(N)}(z)+\varepsilon_{k,3}^{(N)}(z).
\end{align*}
Recall (from \eqref{vanish}) that 
$$(\mathbb{E}_{\leq k}-\mathbb{E}_{\leq k-1})\Big[\Tr(R^{(k)}(z))+\frac{1+\sigma_N^2\Tr(R^{(k)}(z)^2)}{z-D_{kk}-\sigma_N^2\Tr(R^{(k)}(z))}\Big]=0.$$
By integrating first with respect to the $k$-th column of $W_N$, one has that 
\begin{align*}
\mathbb{E}_{\leq k-1} & \Big[\frac{\big(1+\sigma_N^2\Tr(R^{(k)}(z)^2)\big)\big(W_{kk}+C_k^{(k)*}R^{(k)}(z)C_k^{(k)}-\sigma_N^2\Tr(R^{(k)}(z))\big)}{\big(z-D_{kk}-\sigma_N^2\Tr(R^{(k)}(z))\big)^2}\Big]=0 ;\\
\mathbb{E}_{\leq k-1} & \Big[\frac{C_k^{(k)*}R^{(k)}(z)^2C_k^{(k)}-\sigma_N^2\Tr(R^{(k)}(z)^2)}{z-D_{kk}-\sigma_N^2\Tr(R^{(k)}(z))}\Big]=0.
\end{align*}
Observe now that
\begin{align*}
& \frac{\big(1+\sigma_N^2\Tr(R^{(k)}(z)^2)\big)\big(W_{kk}+C_k^{(k)*}R^{(k)}(z)C_k^{(k)}-\sigma_N^2\Tr(R^{(k)}(z))\big)}{\big(z-D_{kk}-\sigma_N^2\Tr(R^{(k)}(z))\big)^2} \\
& \hspace{8cm} +  \frac{C_k^{(k)*}R^{(k)}(z)^2C_k^{(k)}-\sigma_N^2\Tr(R^{(k)}(z)^2)}{z-D_{kk}-\sigma_N^2\Tr(R^{(k)}(z))}\\
= & -\frac{\partial}{\partial z}\frac{W_{kk}+C_k^{(k)*}R^{(k)}(z)C_k^{(k)}-\sigma_N^2\Tr(R^{(k)}(z))}{z-D_{kk}-\sigma_N^2\Tr(R^{(k)}(z))}=-\frac{\partial}{\partial z}\phi_k^{(N)}(z).
\end{align*}
Finally, 
\[(\mathbb{E}_{\leq k}-\mathbb{E}_{\leq k-1})[\mathcal{N}_N(\varphi)]=\Delta_k^{(N)}+\varepsilon_k^{(N)},\]
where $\Delta_k^{(N)}$ and $\varepsilon_k^{(N)}$ may be written as linear combinations respectively of $\mathbb{E}_{\leq k}[-\frac{\partial}{\partial z}\phi_k^{(N)}(z)],\, z \in \C\setminus \R$, and $(\mathbb{E}_{\leq k}-\mathbb{E}_{\leq k-1})[\varepsilon_{k,1}^{(N)}(z)+\varepsilon_{k,2}^{(N)}(z)+\varepsilon_{k,3}^{(N)}(z)]$, $z\in \C\setminus \R$. It remains to prove that $\sum_{k=1}^N\varepsilon_k^{(N)} \underset{N \to +\infty}{\longrightarrow} 0$ in probability. This is the object of next lemma (assuming that the entries of $W_N$ are bounded by $\delta_N$).
\end{proof}

\begin{lem}
 \[\Big\|\sum_{k\geq 1}\varepsilon_{k}^{(N)}\Big\|_{L^2} \underset{N \to +\infty}{\longrightarrow} 0.\]
\end{lem}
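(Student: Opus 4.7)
The plan is to use the martingale orthogonality identity of Lemma~\ref{martingalevariance}. By construction each $\varepsilon_k^{(N)}$ is of the form $(\esp_{\leq k}-\esp_{\leq k-1})[\,\cdot\,]$, so $(\varepsilon_k^{(N)})_{k\geq 1}$ is a square integrable martingale difference with respect to $(\mathcal{F}_k)$, and
\[
\Big\|\sum_{k\geq 1}\varepsilon_k^{(N)}\Big\|_{L^2}^2=\sum_{k=1}^N\esp\bigl|\varepsilon_k^{(N)}\bigr|^2.
\]
Since $\varphi\in\mathcal{L}_1$ is a finite linear combination of resolvent test functions $\varphi_{z_\ell}$, and since $\|(\esp_{\leq k}-\esp_{\leq k-1})X\|_{L^2}\leq 2\|X\|_{L^2}$, it suffices to prove that, for each fixed $z\in\C\setminus\R$ and each $j\in\{1,2,3\}$,
\[
\sum_{k=1}^N\esp\bigl|\varepsilon_{k,j}^{(N)}(z)\bigr|^2\underset{N\to+\infty}{\longrightarrow}0.
\]

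For the deterministic factors appearing in the denominators of the $\varepsilon_{k,j}^{(N)}(z)$, I would invoke the resolvent bounds from Section~\ref{concentration}: $|z-W_{kk}-D_{kk}-C_k^{(k)*}R^{(k)}(z)C_k^{(k)}|^{-1}=|R_N(z)_{kk}|\leq|\Im z|^{-1}$, $|z-D_{kk}-\sigma_N^2\Tr R^{(k)}(z)|^{-1}\leq|\Im z|^{-1}$, together with $|1+\sigma_N^2\Tr R^{(k)}(z)^2|\leq 1+N\sigma_N^2|\Im z|^{-2}=O(1)$, the last bound using $N\sigma_N^2\to\sigma^2$. The stochastic factors $W_{kk}+C_k^{(k)*}R^{(k)}(z)C_k^{(k)}-\sigma_N^2\Tr R^{(k)}(z)$ and $C_k^{(k)*}R^{(k)}(z)^2C_k^{(k)}-\sigma_N^2\Tr R^{(k)}(z)^2$ are controlled in $L^p$ by applying Lemma~\ref{lem_moment_quadratic_forms} to $Y=\sigma_N^{-1}C_k^{(k)}$ and $A=\sigma_N^2 R^{(k)}(z)^q$; using the truncation $|W_{ij}|\leq\delta_N$, one obtains
\[
\esp\bigl|C_k^{(k)*}R^{(k)}(z)^q C_k^{(k)}-\sigma_N^2\Tr R^{(k)}(z)^q\bigr|^p\leq K_p|\Im z|^{-pq}\bigl((Nm_N)^{p/2}+\delta_N^{2p-4}Nm_N\bigr)
\]
for every $p\geq 2$ and $q\in\{1,2\}$, together with $\esp|W_{kk}|^{2p}\leq\delta_N^{2p-2}s_N^2$. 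The crucial point is that Assumption~\ref{hyp:offdiagonal} forces $Nm_N=O(N^{-1})$.

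Splitting each $|\varepsilon_{k,j}^{(N)}(z)|^2$ into a product of squared stochastic factors and applying Cauchy--Schwarz then reduces the bound to combinations of the moment estimates above: for $j=1$ and $j=3$ an $L^4$ control suffices and yields $\esp|\varepsilon_{k,j}^{(N)}(z)|^2=O(N^{-2})+O(\delta_N^{4}/N)$, while for $j=2$, whose numerator is cubic in fluctuation quantities, an $L^4\times L^8$ Cauchy--Schwarz split produces $O(N^{-3})+O(\delta_N^{c}/N)$ for some $c>0$. In all three cases summation over $k\in\{1,\ldots,N\}$ yields $o(1)$, which is the desired conclusion. The main technical nuisance is precisely the cubic term $\varepsilon_{k,2}^{(N)}$: the higher-moment bound above contributes $\delta_N^{2p-4}$ factors, and it is only because the truncation scale $\delta_N$ decays slower than any negative power of $N$ that these factors are absorbed and the summed contribution tends to $0$.
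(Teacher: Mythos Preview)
Your proposal is correct and follows essentially the same route as the paper: martingale orthogonality via Lemma~\ref{martingalevariance} to reduce to $\sum_k\esp|\varepsilon_{k,j}^{(N)}(z)|^2$, then deterministic resolvent bounds on the denominators combined with the moment estimates of Lemma~\ref{lem_moment_quadratic_forms} (an $L^4$ bound for $j=1,3$ and an $L^4\times L^8$ Cauchy--Schwarz split for $j=2$) to obtain $o(N^{-1})$ per term. The only cosmetic differences are that the paper absorbs $|1+\sigma_N^2\Tr R^{(k)}(z)^2|$ into one power of the denominator via the inequality $|1+\sigma_N^2\Tr R^{(k)}(z)^2|\leq |\Im z|^{-1}|z-D_{kk}-\sigma_N^2\Tr R^{(k)}(z)|$ rather than bounding it by $O(1)$ directly, and that the diagonal contribution $\esp[W_{kk}^4]\leq\delta_N^2 s_N^2$ gives $O(\delta_N^2/N)$ rather than the $O(\delta_N^4/N)$ you wrote---neither affects the argument.
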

\begin{proof}
It follows from Lemma \ref{usefulbound} and Lemma \ref{lem_moment_quadratic_forms} that: 
\begin{align*}
\mathbb{E}\big[|\varepsilon_{k,1}^{(N)}(z)|^2\big] & \leq |\Im z|^{-6}\mathbb{E}\big[\big|W_{kk}+C_k^{(k)*}R^{(k)}(z)C_k^{(k)}-\sigma_N^2\Tr(R^{(k)}(z))\big|^4\big]\\
						   & \leq 2^3|\Im z|^{-6}\big(\mathbb{E}[W_{kk}^4]+\mathbb{E}[|C_k^{(k)*}R^{(k)}(z)C_k^{(k)}-\sigma_N^2\Tr(R^{(k)}(z))|^4]\big)\\
						   & \leq 2^3|\Im z|^{-6}\big(\delta_N^2s_N^2+K_{4}|\Im z|^{-4}\big((Nm_N)^2+\delta_N^{4}Nm_N\big)\big).
\end{align*}
Therefore, $\esp[|\varepsilon_{k,1}^{(N)}(z)|^2]=o(N^{-1})$, uniformly in $k$.

Since by Lemma \ref{lem_moment_quadratic_forms}
\[\mathbb{E}\big[\big|C_k^{(k)*}R^{(k)}(z)^2C_k^{(k)}-\sigma_N^2\Tr(R^{(k)}(z)^2)\big|^4\big]\leq K_{4}|\Im z|^{-8}\big((Nm_N)^{2}+\delta_N^{4}Nm_N\big),\]
and 
\begin{align*}
\mathbb{E}\big[\big|W_{kk}+C_k^{(k)*}R^{(k)}(z)C_k^{(k)}-\sigma_N^2 & \Tr(R^{(k)}(z))\big|^8\big]\\
& \leq 2^7\big(\mathbb{E}\big[|W_{kk}|^8\big]+\mathbb{E}\big[\big|C_k^{(k)*}R^{(k)}(z)C_k^{(k)}-\sigma_N^2\Tr(R^{(k)}(z))\big|^8\big]\big)\\
										    & \leq 2^7\big(\delta_N^{6}s_N^2+K_{8}|\Im z|^{-8}\big((Nm_N)^{4}+\delta_N^{12}Nm_N\big)\big),
\end{align*}
one obtains by Cauchy-Schwarz inequality that $\mathbb{E}[|\varepsilon_{k,2}^{(N)}(z)|^2]=o(N^{-1})$, uniformly in $k$.

Similarly,
\[\mathbb{E}\big[\big|C_k^{(k)*}R^{(k)}(z)^2C_k^{(k)}-\sigma_N^2\Tr(R^{(k)}(z)^2)\big|^4\big]\leq K_{4}|\Im z|^{-8}\big((Nm_N)^{2}+\delta_N^{4}Nm_N\big)\]
and
\begin{align*}
\mathbb{E}\big[\big|W_{kk}+C_k^{(k)*}R^{(k)}(z)C_k^{(k)}-\sigma_N^2 & \Tr(R^{(k)}(z))\big|^4\big]\\
& \leq 2^3\big(\mathbb{E}\big[|W_{kk}|^4\big]+\mathbb{E}\big[\big|C_k^{(k)*}R^{(k)}(z)C_k^{(k)}-\sigma_N^2\Tr(R^{(k)}(z))\big|^4\big]\big)\\
										    & \leq 2^3\Big(\delta_N^{2}s_N^2+K_{4}|\Im z|^{-4}\big((Nm_N)^{2}+\delta_N^{4}Nm_N\big)\Big)
\end{align*}
imply that $\mathbb{E}[|\varepsilon_{k,3}^{(N)}(z)|^2]=o(N^{-1})$, uniformly in $k$.

\

\noindent By Lemma \ref{martingalevariance} and Jensen inequality, 
\begin{align*} 
\mathbb{E}\big[\big|\sum_{k=1}^N(\mathbb{E}_{\leq k}-\mathbb{E}_{\leq k-1})\big[\varepsilon_{k,1}^{(N)}(z)+\varepsilon_{k,2}^{(N)}(z) & +\varepsilon_{k,3}^{(N)}(z)\big]\big|^2\big]\\
& = \sum_{k=1}^N\mathbb{E}\big[\big|(\mathbb{E}_{\leq k}-\mathbb{E}_{\leq k-1})\big[\varepsilon_{k,1}^{(N)}(z)+\varepsilon_{k,2}^{(N)}(z)+\varepsilon_{k,3}^{(N)}(z)\big]\big|^2\big]\\
					      &\leq \sum_{k=1}^N6\sum_{j=1}^3\big(\mathbb{E}\big[\big|(\mathbb{E}_{\leq k-1}\big[\varepsilon_{k,j}^{(N)}(z)\big]\big|^2\big]+\mathbb{E}\big[\big|(\mathbb{E}_{\leq k}\big[\varepsilon_{k,j}^{(N)}(z)\big]\big|^2\big]\big)\\
					      &\leq 12\sum_{k=1}^N\mathbb{E}\big[\big|\varepsilon_{k,1}^{(N)}(z)\big|^2+\big|\varepsilon_{k,2}^{(N)}(z)\big|^2+\big|\varepsilon_{k,3}^{(N)}(z)\big|^2\big]=o(1).
\end{align*}
Finally, by triangle inequality, 
$$\Big\|\sum_{k\geq 1}\varepsilon_{k}^{(N)}\Big\|_{L^2}=o(1).$$
\end{proof}

As announced at the beginning of the Section, the strategy is now to apply Theorem \ref{thm_CLT_martingale} to $\Delta_k^{(N)}$.

\subsection{Verification of Lyapounov condition} To check condition \eqref{L}, one first uses Markov inequality to get, for $p=2(1+\varepsilon)>2$:
$$ L(\varepsilon,N) \leq \varepsilon^{2-p}\sum_{k=1}^N\|\Delta_k^{(N)}\|_{L^p}^p.$$
It is therefore sufficient to prove that 
\begin{equation}\label{eq_cond_norm_p}
\|\Delta_k^{(N)}\|_{L^p}=O(N^{-\frac{1}{2}}),
\end{equation}
uniformly in $k$. By triangle inequality, it is sufficient to have 
\begin{equation*}
\sup_{1\leq k\leq N}\Big\|\mathbb{E}_{\leq k}\big[-\frac{\partial}{\partial z}\phi_k^{(N)}(z)\big]\Big\|_{L^p}=O(N^{-\frac{1}{2}}),
\end{equation*}
uniformly in $k$. By Jensen and triangular inequalities and Lemma \ref{usefulbound}, for $1\leq k\leq N$,
\begin{align*}
\Big\|\mathbb{E}_{\leq k}\big[-\frac{\partial}{\partial z}\phi_k^{(N)}(z)\big]\Big\|_{L^p} & \leq \bigg\| \frac{(1+\sigma_N^2\Tr(R^{(k)}(z)^2))(W_{kk}+C_k^{(k)*}R^{(k)}(z)C_k^{(k)}-\sigma_N^2\Tr(R^{(k)}(z)))}{(z-D_{kk}-\sigma_N^2\Tr(R^{(k)}(z)))^2}\bigg\|_{L^p}\\
& \quad + \ \bigg\| \frac{C_k^{(k)*}R^{(k)}(z)^2C_k^{(k)}-\sigma_N^2\Tr(R^{(k)}(z)^2)}{z-D_{kk}-\sigma_N^2\Tr(R^{(k)}(z))}\bigg\|_{L^p}\\
& \leq |\mathcal{I}z|^{-2}\big(\| W_{kk}\|_{L^p}+\| C_k^{(k)*}R^{(k)}(z)C_k^{(k)}-\sigma_N^2\Tr(R^{(k)}(z))\|_{L^p}\big)\\
& \quad + |\mathcal{I}z|^{-1}\big\| C_k^{(k)*}R^{(k)}(z)^2C_k^{(k)}-\sigma_N^2\Tr(R^{(k)}(z)^2)\big\|_{L^p}.
\end{align*}
One deduces \eqref{eq_cond_norm_p} because of the assumption $\| W_{kk}\|_{L^p}\leq C_2^{1/p}N^{-1/2}$ and of the improved bound \eqref{eq_improved_bound} (recall that $p=2(1+\varepsilon)$):
\[\big\| C_k^{(k)*}R^{(k)}(z)^qC_k^{(k)}-\sigma_N^2\Tr(R^{(k)}(z)^q)\big\|_{L^p}\leq K_{p}^{1/p}|\Im z|^{-q}\Big((Nm_N)^{p/2}+C_{4}N^{-1-2\varepsilon}\Big)^{1/p}.\]

\subsection{Convergence of the hook process} \label{subsectionhook}

\noindent By bilinearity, the verification of conditions \eqref{v} and \eqref{w} is equivalent to the convergence in probability of the {\em hook process}:
\[\Gamma_N(z_1,z_2):=\sum_{k=1}^N\mathbb{E}_{\leq k-1}\Big[\mathbb{E}_{\leq k}\big[\frac{\partial}{\partial z}\phi_k^{(N)}(z_1)\big]\mathbb{E}_{\leq k}\big[\frac{\partial}{\partial z}\phi_k^{(N)}(z_2)\big]\Big],\, z_1,z_2\in\mathbb{C}\setminus\mathbb{R}.\]

\begin{prop} \label{hook}
 For all $z_1,z_2\in\mathbb{C}\setminus\mathbb{R}$
 \[\Gamma_N(z_1,z_2) \underset{N\to +\infty}{\longrightarrow} \Gamma(z_1,z_2)\]
in probability, where 
 \begin{align*}
 \Gamma(z_1,z_2) & := \frac{\partial^2}{\partial z_1\partial z_2}\bigg[(s^2-\sigma^2-\tau)\int_{\mathbb{R}}\frac{\nu_{\infty}(dx)}{(\omega(z_1)-x)(\omega(z_2)-x)}+\frac{\kappa}{2}\Big(\int_{\mathbb{R}}\frac{\nu_{\infty}(dx)}{(\omega(z_1)-x)(\omega(z_2)-x)}\Big)^2\\
 		         & -\log\Big(1-\sigma^2\int_{\mathbb{R}}\frac{\nu_{\infty}(dx)}{(\omega(z_1)-x)(\omega(z_2)-x)}\Big)-\log\Big(1-\tau\int_{\mathbb{R}}\frac{\nu_{\infty}(dx)}{(\omega(z_1)-x)(\omega(z_2)-x)}\Big)\bigg]\\
 		         & = \frac{\partial^2}{\partial z_1\partial z_2}\bigg[\frac{s^2-\sigma^2-\tau}{\sigma^2}\Big(1-\frac{z_1-z_2}{\omega(z_1)-\omega(z_2)}\Big)+\frac{\kappa}{2\sigma^4}\Big(1-\frac{z_1-z_2}{\omega(z_1)-\omega(z_2)}\Big)^2\\
 		         & +\log\Big(\frac{\omega(z_1)-\omega(z_2)}{z_1-z_2}\Big)-\log\Big(1-\frac{\tau}{\sigma^2}\Big(1-\frac{z_1-z_2}{\omega(z_1)-\omega(z_2)}\Big)\Big)\bigg].
 \end{align*}
\end{prop}

Remark that, in the non deformed case, $\nu_{\infty}=\delta_0$, $\rho=\mu_{\sigma^2}$ and $\omega(z)=z-\sigma^2G_{\mu_{\sigma^2}}(z)=G_{\mu_{\sigma^2}}(z)^{-1}$. In this case, 
$\Gamma(z_1,z_2)$ coincides with $C_0(\varphi_{z_1},\varphi_{z_2})$ in Theorem \ref{BaoXieThm}.
 
In what follows, we focus on the convergence of 
\begin{equation*}\label{eq:hook_primitive}
\gamma_N(z_1,z_2):=\sum_{k=1}^N\mathbb{E}_{\leq k-1}\Big[\mathbb{E}_{\leq k}\big[\phi_k^{(N)}(z_1)\big]\mathbb{E}_{\leq k}\big[\phi_k^{(N)}(z_2)\big]\Big].
\end{equation*}
This will be enough to establish the convergence of the hook process, as explained in Subsection \ref{sec:CV_hook_ccl}.

\begin{lem}
\begin{align*}
\sum_{k=1}^N & \bigg\{\mathbb{E}_{\leq k-1}[\mathbb{E}_{\leq k}[\phi_k^{(N)}(z_1)] \mathbb{E}_{\leq k}[\phi_k^{(N)}(z_2)]]-\tilde{R}(z_1)_{kk}\tilde{R}(z_2)_{kk}\Big(s_N^2\\
& +\mathbb{E}_{\leq k-1}[\mathbb{E}_{\leq k}  [C_k^{(k)*}R^{(k)}(z_1)C_k^{(k)}-\sigma_N^2\Tr(R^{(k)}(z_1))]\mathbb{E}_{\leq k}[C_k^{(k)*}R^{(k)}(z_2)C_k^{(k)}-\sigma_N^2\Tr(R^{(k)}(z_2))]]\Big)\bigg\}\\
& \underset{N\to +\infty}{\longrightarrow}0
\end{align*}
in probability.
\end{lem}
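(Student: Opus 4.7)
The plan is to approximate $\phi_k^{(N)}(z)$ by $\tilde{R}(z)_{kk}\cdot \mathcal{N}_k(z)$, where
\[\mathcal{N}_k(z) := W_{kk} + C_k^{(k)*}R^{(k)}(z)C_k^{(k)} - \sigma_N^2 \Tr(R^{(k)}(z))\]
is the numerator of $\phi_k^{(N)}(z)$. Writing $\mathcal{D}_k(z) := z - D_{kk} - \sigma_N^2 \Tr(R^{(k)}(z))$ for the denominator and $\Delta_k(z) := \sigma_N^2(\Tr(R^{(k)}(z)) - \esp[\Tr(R_N(z))])$, the elementary identity
\[\frac{1}{\mathcal{D}_k(z)} - \tilde{R}(z)_{kk} = \frac{\tilde{R}(z)_{kk}\, \Delta_k(z)}{\mathcal{D}_k(z)}\]
yields the decomposition $\phi_k^{(N)}(z) = \tilde{R}(z)_{kk}\mathcal{N}_k(z) + \rho_k(z)$, with $\rho_k(z) := \tilde{R}(z)_{kk}\mathcal{D}_k(z)^{-1}\Delta_k(z)\mathcal{N}_k(z)$.

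Expanding $\esp_{\leq k}[\phi_k^{(N)}(z_1)]\esp_{\leq k}[\phi_k^{(N)}(z_2)]$ via this decomposition produces a main term $\tilde{R}(z_1)_{kk}\tilde{R}(z_2)_{kk}\esp_{\leq k}[\mathcal{N}_k(z_1)]\esp_{\leq k}[\mathcal{N}_k(z_2)]$ plus three error terms involving one or two factors of $\rho_k$. Splitting $\mathcal{N}_k(z) = W_{kk} + Q_k(z)$ with $Q_k(z) := C_k^{(k)*}R^{(k)}(z)C_k^{(k)} - \sigma_N^2\Tr(R^{(k)}(z))$, I would observe that $\esp_{\leq k}[Q_k(z)]$ is a function of the $W_{ij}$ with $i \leq j < k$ and of $W_{ik}$ for $i<k$ only, hence is independent of $W_{kk}$ conditionally on $\mathcal{F}_{k-1}$. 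Since $\esp[W_{kk}] = 0$ and $\esp[W_{kk}^2] = s_N^2$, the cross terms in the expansion of $\esp_{\leq k-1}[(W_{kk}+\esp_{\leq k}[Q_k(z_1)])(W_{kk}+\esp_{\leq k}[Q_k(z_2)])]$ vanish, yielding exactly $s_N^2 + \esp_{\leq k-1}[\esp_{\leq k}[Q_k(z_1)]\esp_{\leq k}[Q_k(z_2)]]$, matching the factor multiplying $\tilde{R}(z_1)_{kk}\tilde{R}(z_2)_{kk}$ on the right-hand side of the lemma.

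To kill the three error terms summed over $k$, I would bound them in $L^1$. Applied conditionally on everything but the $k$-th column of $W_N$, Lemma \ref{quadratic forms} gives $\esp_k[|\mathcal{N}_k(z)|^2] \leq s_N^2 + m_N\Tr(R^{(k)*}R^{(k)}) \leq s_N^2 + m_N N|\Im z|^{-2} = O(N^{-1})$; in particular $\|\mathcal{N}_k(z)\|_{L^2} = O(N^{-1/2})$. Crucially, $\tilde{R}(z)_{kk}$, $\mathcal{D}_k(z)$ and $\Delta_k(z)$ all depend only on $R^{(k)}$ (and deterministic data), so are independent of $\mathcal{N}_k(z)$ under $\esp_k$; pulling them outside and using $|\mathcal{D}_k(z)| \geq |\Im z|$ together with $\|\Delta_k(z)\|_{L^2} = O(N^{-1})$ (from Proposition \ref{variancebound2} combined with \eqref{bounddifftrace}) gives
\[\|\rho_k(z)\|_{L^2}^2 \leq \frac{1}{|\Im z|^4}\esp\!\left[|\Delta_k(z)|^2\esp_k[|\mathcal{N}_k(z)|^2]\right] = O(N^{-3}).\]
Cauchy-Schwarz and Jensen then bound the $L^1$ norm of each summand in the cross error terms by $\|\mathcal{N}_k\|_{L^2}\|\rho_k\|_{L^2} = O(N^{-2})$, and the $\rho_k\rho_k$ summand by $O(N^{-3})$; summation over $k$ yields $O(N^{-1}) \to 0$ in $L^1$, hence in probability.

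The main obstacle will be the tight bound on $\|\rho_k\|_{L^2}$: a direct application of Cauchy-Schwarz that ignores the conditional independence of $\mathcal{N}_k$ (which depends on the $k$-th row/column of $W_N$) from $\Delta_k$ (which depends only on $R^{(k)}$) would lose a factor $\sqrt{N}$ and fail to produce a summable bound. Exploiting the $\esp_k$-factorization is therefore the key technical step.
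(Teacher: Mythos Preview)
Your proposal is correct and follows essentially the same route as the paper. The paper introduces the same remainder (called $\varepsilon_{k,4}^{(N)}$ there, your $\rho_k$), bounds $\mathbb{E}[|\mathbb{E}_{\leq k}[\varepsilon_{k,4}^{(N)}]|^2]=O(N^{-2})$ via the same $\mathbb{E}_k$-factorization (pulling $\Delta_k$ and $\mathcal{D}_k^{-1}$ outside $\mathbb{E}_k$ and using the deterministic bound on $\mathbb{E}_k[|\mathcal{N}_k|^2]$), and concludes by Cauchy--Schwarz; the only difference is that the paper uses the crude variance bound \eqref{variancebound1} instead of Proposition~\ref{variancebound2}, yielding $O(N^{-1/2})$ rather than your $O(N^{-1})$ for the sum, which is immaterial.
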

\begin{proof}
Define 
\begin{align*}
\varepsilon_{k,4}^{(N)}(z) & := \phi_k^{(N)}(z)-\tilde{R}(z)_{kk}\big(W_{kk}+C_k^{(k)*}R^{(k)}(z)C_k^{(k)}-\sigma_N^2\Tr(R^{(k)}(z))\big)\\
			& = \frac{\sigma_N^2(\Tr(R^{(k)}(z))-\mathbb{E}[\Tr(R_N(z))]\big)\big(W_{kk}+C_k^{(k)*}R^{(k)}(z)C_k^{(k)}-\sigma_N^2\Tr(R^{(k)}(z)))}{\big(z-D_{kk}-\sigma_N^2\Tr(R^{(k)}(z))\big)\big(z-D_{kk}-\sigma_N^2\mathbb{E}[\Tr(R_N(z))]\big)}.
\end{align*}
Remark from \eqref{variancebound1} that it satisfies
\begin{align*}
\mathbb{E}\big[|\mathbb{E}_{\leq k} & [\varepsilon_{k,4}^{(N)} (z)]|^2\big]\\
&\leq \frac{\sigma_N^4}{|\mathcal{I}z|^{4}}\Big(s_N^2+\frac{N}{|\Im z|^{2}}(\sigma_N^4+\tau_N^2+\kappa_N)\Big) \mathbb{E}[|\Tr(R^{(k)}(z))-\mathbb{E}[\Tr(R_N(z))]|^2\big]\\
						        &\leq \frac{\sigma_N^4}{|\mathcal{I}z|^{4}}\big(s_N^2+\frac{N}{|\Im z|^{2}}(\sigma_N^4+\tau_N^2+\kappa_N)\big)\big(\Var[\Tr(R^{(k)}(z))]+\big|\mathbb{E}[\Tr(R^{(k)}(z))-\Tr(R_N(z))]\big|^2\big)\\
						        &\leq \frac{\sigma_N^4}{|\mathcal{I}z|^{4}}\big(s_N^2+\frac{N}{|\Im z|^{2}}(\sigma_N^4+\tau_N^2+\kappa_N)\big)\big(\frac{4N}{|\mathcal{I}z|^2}+\frac{1}{|\mathcal{I}z|^2}\big)=O(N^{-2}),
\end{align*}
uniformly in $k$ (see the comment following Proposition \ref{variancebound2}). Furthermore, from Lemma \ref{quadratic forms}, 
\[\esp[|\phi_k^{(N)}(z)|^2]\leq |\Im z|^{-2}(s_N^2+Nm_N|\Im z|^{-2}),\]
and
\[\esp[|\tilde{R}(z)_{kk}\big(W_{kk}+C_k^{(k)*}R^{(k)}(z)C_k^{(k)}-\sigma_N^2\Tr(R^{(k)}(z))|^2] \leq |\Im z|^{-2}(s_N^2+Nm_N|\Im z|^{-2}).\]
Remark now that
\begin{align*}
& \sum_{k=1}^N  \bigg\{\mathbb{E}_{\leq k-1}[\mathbb{E}_{\leq k}[\phi_k^{(N)}(z_1)] \mathbb{E}_{\leq k}[\phi_k^{(N)}(z_2)]]-\tilde{R}(z_1)_{kk}\tilde{R}(z_2)_{kk}\Big(s_N^2\\
& \hspace{0.5cm} +\mathbb{E}_{\leq k-1}[\mathbb{E}_{\leq k}  [C_k^{(k)*}R^{(k)}(z_1)C_k^{(k)}-\sigma_N^2\Tr(R^{(k)}(z_1))]\mathbb{E}_{\leq k}[C_k^{(k)*}R^{(k)}(z_2)C_k^{(k)}-\sigma_N^2\Tr(R^{(k)}(z_2))]]\Big)\bigg\}\\
& = \sum_{k=1}^N \bigg\{\mathbb{E}_{\leq k-1}[\mathbb{E}_{\leq k}[\varepsilon_{k,4}^{(N)}(z_1)] \mathbb{E}_{\leq k}[\phi_k^{(N)}(z_2)]]\\
& \hspace{0.5 cm} +  \tilde{R}(z_1)_{kk}\mathbb{E}_{\leq k-1}[\mathbb{E}_{\leq k}\big[W_{kk}+C_k^{(k)*}R^{(k)}(z_1)C_k^{(k)}-\sigma_N^2\Tr(R^{(k)}(z_1))\big]\mathbb{E}_{\leq k}[\varepsilon_{k,4}^{(N)}(z_2)] \big]\bigg\}.
\end{align*}
By triangle inequality, the $L^1$ norm of this expression is bounded by
\begin{align*}
 & = \sum_{k=1}^N \bigg\{\mathbb{E}[\big|\mathbb{E}_{\leq k}[\varepsilon_{k,4}^{(N)}(z_1)]\phi_k^{(N)}(z_2)]\big|\\
& \hspace{1 cm} +  \mathbb{E}[|\tilde{R}(z_1)_{kk}\big(W_{kk}+C_k^{(k)*}R^{(k)}(z_1)C_k^{(k)}-\sigma_N^2\Tr(R^{(k)}(z_1))\big)\mathbb{E}_{\leq k}[\varepsilon_{k,4}^{(N)}(z_2)]\big|]\bigg\}.
\end{align*}
Using Cauchy-Schwarz inequality yields that this $L^1$ norm is $O(N^{-1/2})$. Therefore, this term goes to $0$ in probability.
\end{proof}

Recall that, by Lemma \ref{quadratic forms}:
\begin{align*}
&s_N^2+\mathbb{E}_{\leq k-1} \Big[\mathbb{E}_{\leq k}\big[C_k^{(k)*}R^{(k)}(z_1)C_k^{(k)}-\sigma_N^2\Tr(R^{(k)}(z_1))\big]\mathbb{E}_{\leq k}\big[C_k^{(k)*}R^{(k)}(z_2)C_k^{(k)}-\sigma_N^2\Tr(R^{(k)}(z_2))\big]\Big]\\
 &=s_N^2+\sigma_N^4\sum_{i,j<k}\mathbb{E}_{\leq k-1}[R^{(k)}(z_1)_{ij}]\mathbb{E}_{\leq k-1}[R^{(k)}(z_2)_{ji}]\\
 & \hspace{0.9cm} +\tau_N^2\sum_{i,j<k}\mathbb{E}_{\leq k-1}[R^{(k)}(z_1)_{ij}]\mathbb{E}_{\leq k-1}[R^{(k)}(z_2)_{ij}]\\
 & \hspace{0.9cm} +\kappa_N\sum_{i<k}\mathbb{E}_{\leq k-1}[R^{(k)}(z_1)_{ii}]\mathbb{E}_{\leq k-1}[R^{(k)}(z_2)_{ii}].
\end{align*}
Therefore, what remains to study is the sum of four terms. They will be studied separately in the following paragraphs. The first and the fourth terms are studied very easily, whereas the second and third ones need quite long computations, making repeated use of linear algebra properties which were collected in Section \ref{sec:preliminary_results}. These second and third terms are very similar.

\subsubsection{Contribution of the first term}
Notice that 
\begin{eqnarray*}
\frac{1}{N}\sum_{k=1}^N\tilde{R}(z_1)_{kk}\tilde{R}(z_2)_{kk}&=&\int_{\mathbb{R}}\frac{\nu_N(dx)}{(\omega_N(z_1)-x)(\omega_N(z_2)-x)}\\
									        &\underset{N\to +\infty}{\longrightarrow} &\int_{\mathbb{R}}\frac{\nu_{\infty}(dx)}{(\omega(z_1)-x)(\omega(z_2)-x)}.
\end{eqnarray*}
Hence, $$s_N^2\sum_{k=1}^N\tilde{R}(z_1)_{kk}\tilde{R}(z_2)_{kk}\underset{N\to +\infty}{\longrightarrow} s^2\int_{\mathbb{R}}\frac{\nu_{\infty}(dx)}{(\omega(z_1)-x)(\omega(z_2)-x)}.$$

\subsubsection{Contribution of the second term} \label{sec_2nd_term}
The second term writes as follows.
\[\sigma_N^4\sum_{k=1}^N\tilde{R}(z_1)_{kk}\tilde{R}(z_2)_{kk}\sum_{i,j<k}\esp_{\leq k-1}[R^{(k)}(z_1)_{ij}]\esp_{\leq k-1}[R^{(k)}(z_2)_{ji}]. \]
To handle it, use first the definition of the resolvent $R^{(k)}(z_1)$.
\[(z_1-D_{ii})R^{(k)}(z_1)_{ij} = \delta_{ij}+\sum_{l\neq k}W_{il}R^{(k)}(z_1)_{lj}.\]
We want to remove the dependence between $W_{il}$ and $R^{(k)}(z_1)$, using \eqref{remove}. 
Plug now the following expression
\begin{align*}
(z_1-D_{ii})R^{(k)}(z_1)_{ij} = \delta_{ij} & + \sum_{l\neq k}\Big\{W_{il}R^{(kil)}(z_1)_{lj}\\
                                            &  + (1-\frac{1}{2}\delta_{il})\Big(W_{il}^2R^{(kil)}(z_1)_{li}R^{(k)}(z_1)_{lj}+|W_{il}|^2R^{(kil)}(z_1)_{ll}R^{(k)}(z_1)_{ij}\Big)\Big\},
\end{align*}
to get
\begin{align*}
(z_1-D_{ii})\mathbb{E}_{\leq k-1}[R^{(k)}(z_1)_{ij}]& \mathbb{E}_{\leq k-1}[R^{(k)}(z_2)_{ji}]\\
& = \delta_{ij}\mathbb{E}_{\leq k-1}[R^{(k)}(z_2)_{ji}]\\
& \quad +\sum_{l<k}W_{il}\mathbb{E}_{\leq k-1}[R^{(kil)}(z_1)_{lj}]\mathbb{E}_{\leq k-1}[R^{(k)}(z_2)_{ji}]\\
& \quad +\sum_{l\neq k}(1-\frac{1}{2}\delta_{il})\mathbb{E}_{\leq k-1}[W_{il}^2R^{(kil)}(z_1)_{li}R^{(k)}(z_1)_{lj}]\mathbb{E}_{\leq k-1}[R^{(k)}(z_2)_{ji}]\\
& \quad +\sum_{l\neq k}(1-\frac{1}{2}\delta_{il})\mathbb{E}_{\leq k-1}[|W_{il}|^2R^{(kil)}(z_1)_{ll}R^{(k)}(z_1)_{ij}]\mathbb{E}_{\leq k-1}[R^{(k)}(z_2)_{ji}].
\end{align*}
Remark that $R^{(k)}(z_1)_{ij} \esp_{\leq k-1}[R^{(k)}(z_2)_{ji}]$ appears in the last term of the right-hand side. Heuristically, imagine that in this last term $|W_{il}|^2,\, l\neq i,$ is close to its expectation $\sigma_N^2$ and $R^{(kil)}(z_1)_{ll}\approx \mathbb{E}[R_N(z_1)_{ll}]$. The whole term is therefore close to $\sigma_N^2\mathbb{E}[\Tr(R_N(z_1))]\mathbb{E}_{\leq k-1}[R^{(k)}(z_1)_{ij}]\mathbb{E}_{\leq k-1}[R^{(k)}(z_2)_{ji}]$. 

To make this argument rigorous, subtract $\sigma_N^2\mathbb{E}[\Tr(R_N(z_1))]\mathbb{E}_{\leq k-1}[R^{(k)}(z_1)_{ij}]\mathbb{E}_{\leq k-1}[R^{(k)}(z_2)_{ji}]$ to get:
\begin{align*}
(\tilde{\omega}_N(z_1)-D_{ii})&\mathbb{E}_{\leq k-1}[R^{(k)}(z_1)_{ij}] \mathbb{E}_{\leq k-1}[R^{(k)}(z_2)_{ji}]\\
& = \delta_{ij}\mathbb{E}_{\leq k-1}[R^{(k)}(z_2)_{ji}] +\sum_{l<k}W_{il}\mathbb{E}_{\leq k-1}[R^{(kil)}(z_1)_{lj}]\mathbb{E}_{\leq k-1}[R^{(k)}(z_2)_{ji}]\\
& \quad +\sum_{l\neq k}(1-\frac{1}{2}\delta_{il})\mathbb{E}_{\leq k-1}[W_{il}^2R^{(kil)}(z_1)_{li}R^{(k)}(z_1)_{lj}]\mathbb{E}_{\leq k-1}[R^{(k)}(z_2)_{ji}]\\
& \quad +\mathbb{E}_{\leq k-1}[\sum_{l\neq k}(1-\frac{1}{2}\delta_{il})(|W_{il}|^2R^{(kil)}(z_1)_{ll}-\sigma_N^2\mathbb{E}[R_N(z_1)_{ll}])R^{(k)}(z_1)_{ij}]\mathbb{E}_{\leq k-1}[R^{(k)}(z_2)_{ji}]\\
& \quad -\frac{\sigma_N^2}{2}\mathbb{E}[R_N(z_1)_{ii}]\mathbb{E}_{\leq k-1}[R^{(k)}(z_1)_{ij}]\mathbb{E}_{\leq k-1}[R^{(k)}(z_2)_{ji}]\\
& \quad -\sigma_N^2\mathbb{E}[R_N(z_1)_{kk}]\mathbb{E}_{\leq k-1}[R^{(k)}(z_1)_{ij}]\mathbb{E}_{\leq k-1}[R^{(k)}(z_2)_{ji}].
\end{align*}
As $\tilde{R}(z_1)_{ii}=(\tilde{\omega}_N(z_1)-D_{ii})^{-1}$,
\begin{align*}
\mathbb{E}_{\leq k-1} [& R^{(k)}(z_1)_{ij}]\mathbb{E}_{\leq k-1}[R^{(k)}(z_2)_{ji}]\\
& =\delta_{ij}\tilde{R}(z_1)_{ii}\mathbb{E}_{\leq k-1}[R^{(k)}(z_2)_{ji}] +\tilde{R}(z_1)_{ii}\sum_{l<k}W_{il}\mathbb{E}_{\leq k-1}[R^{(kil)}(z_1)_{lj}]\mathbb{E}_{\leq k-1}[R^{(k)}(z_2)_{ji}]\\
& + \quad \sum_{l\neq k}(1-\frac{1}{2}\delta_{il})\tilde{R}(z_1)_{ii}\mathbb{E}_{\leq k-1}[W_{il}^2R^{(kil)}(z_1)_{li}R^{(k)}(z_1)_{lj}]\mathbb{E}_{\leq k-1}[R^{(k)}(z_2)_{ji}]\\
& + \quad \mathbb{E}_{\leq k-1}[\sum_{l\neq k}(1-\frac{1}{2}\delta_{il})\tilde{R}(z_1)_{ii}(|W_{il}|^2R^{(kil)}(z_1)_{ll}-\sigma_N^2\mathbb{E}[R_N(z_1)_{ll}])R^{(k)}(z_1)_{ij}]\mathbb{E}_{\leq k-1}[R^{(k)}(z_2)_{ji}]\\
& - \quad \frac{\sigma_N^2}{2}\tilde{R}(z_1)_{ii}\mathbb{E}[R_N(z_1)_{ii}]\mathbb{E}_{\leq k-1}[R^{(k)}(z_1)_{ij}]\mathbb{E}_{\leq k-1}[R^{(k)}(z_2)_{ji}]\\
& - \quad \sigma_N^2\tilde{R}(z_1)_{ii}\mathbb{E}[R_N(z_1)_{kk}]\mathbb{E}_{\leq k-1}[R^{(k)}(z_1)_{ij}]\mathbb{E}_{\leq k-1}[R^{(k)}(z_2)_{ji}].
\end{align*}
It may be seen that the last four terms of the right-hand side will not asymptotically contribute. This is the object of next Lemma whose proof is postponed to the end of the paragraph.
\begin{lem}\label{lem_2nd_term_simplification1}
\begin{align*}
  \sum_{i,j<k}&\esp_{\leq k-1}[R^{(k)}(z_1)_{ij}]\esp_{\leq k-1}[R^{(k)}(z_2)_{ji}]\\
  & = \sum_{i<k}\tilde{R}(z_1)_{ii}\mathbb{E}_{\leq k-1}[R^{(k)}(z_2)_{ii}] + \sum_{i,j<k}\tilde{R}(z_1)_{ii}\sum_{l<k}W_{il}\mathbb{E}_{\leq k-1}[R^{(kil)}(z_1)_{lj}]\mathbb{E}_{\leq k-1}[R^{(k)}(z_2)_{ji}] + \varepsilon_{k,5}^{(N)},
 \end{align*}
 with $\sigma_N^4\sum_{k=1}^N\tilde{R}(z_1)_{kk}\tilde{R}(z_2)_{kk}\varepsilon_{k,5}^{(N)} \underset{N \to +\infty}{\longrightarrow} 0$ in probability.
\end{lem}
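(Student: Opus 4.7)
The plan is to start from the identity for $\esp_{\leq k-1}[R^{(k)}(z_1)_{ij}]\esp_{\leq k-1}[R^{(k)}(z_2)_{ji}]$ derived just above the lemma, and sum it over $i,j<k$. The first term of its right-hand side yields (via $\delta_{ij}$) the term $\sum_{i<k}\tilde{R}(z_1)_{ii}\esp_{\leq k-1}[R^{(k)}(z_2)_{ii}]$ kept in the lemma; the second term yields directly the second kept term; and $\varepsilon_{k,5}^{(N)}$ is defined as the sum over $i,j<k$ of the last four terms. Since $|\sigma_N^4\tilde{R}(z_1)_{kk}\tilde{R}(z_2)_{kk}|=O(N^{-2})$ uniformly in $k$, Markov's inequality reduces the claim to the $L^1$ estimate $\esp[|\varepsilon_{k,5}^{(N)}|]=o(N)$ uniformly in $k$.

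The two ``$\sigma_N^2$-correction'' pieces, those involving $\tilde{R}(z_1)_{ii}\esp[R_N(z_1)_{ii}]$ and $\tilde{R}(z_1)_{ii}\esp[R_N(z_1)_{kk}]$, are the easiest: their prefactor has size $O(\sigma_N^2)=O(N^{-1})$, and Cauchy--Schwarz together with $\sum_j|R^{(k)}(z)_{ij}|^2\leq\|R^{(k)}(z)\|^2\leq|\Im z|^{-2}$ yields
$$\sum_{i,j<k}|\esp_{\leq k-1}[R^{(k)}(z_1)_{ij}]|\,|\esp_{\leq k-1}[R^{(k)}(z_2)_{ji}]|\leq \frac{N}{|\Im z_1|\,|\Im z_2|},$$
so each contributes $O(\sigma_N^2 N)=O(1)$, which is $o(N)$.

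The two remaining pieces, involving $W_{il}^2\,R^{(kil)}(z_1)_{li}R^{(k)}(z_1)_{lj}$ and the centred factor $(|W_{il}|^2R^{(kil)}(z_1)_{ll}-\sigma_N^2\esp[R_N(z_1)_{ll}])R^{(k)}(z_1)_{ij}$, are treated by expanding $R^{(k)}$ in terms of $R^{(kil)}$ via the removal identity \eqref{remove}, which disentangles the dependence on $W_{il}$. For $l<k$, $W_{il}$ is $\mathcal{F}_{k-1}$-measurable with $|W_{il}|\leq\delta_N$ thanks to the truncation in this subsection; for $l>k$, conditional averaging of $W_{il}^2$ (resp.\ $|W_{il}|^2-\sigma_N^2$) produces a factor $\tau_N$ (resp.\ $\kappa_N$). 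The approximation \eqref{eq_approximation_Rk_tildeR} replaces $R^{(kil)}(z_1)_{ll}$ by $\tilde{R}(z_1)_{ll}$ inside the centring, while the Hilbert--Schmidt estimate of Lemma \ref{technicalbound} and Cauchy--Schwarz on $j$ yield an $L^1$ bound of order $O(1)$ on each contribution, uniformly in $k$.

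The main obstacle is the bookkeeping for the $W_{il}^2$-piece: since $\tau_N$ is generically nonzero, one must verify that the $\tau_N$-contribution arising from conditional averaging does not recombine into a term of the same order as the two main ones retained in the lemma. After the \eqref{remove}-expansion, this $\tau_N$-contribution only produces off-diagonal factors $R^{(kil)}(z_1)_{li}$ for $l\neq i$, whose column-squared-sums are bounded by $|\Im z_1|^{-2}$, giving an overall $O(\tau_N N)=O(1)$; the higher-order corrections involving extra powers of $W_{il}$ are absorbed by the truncation $|W_{il}|\leq\delta_N$. This case analysis based on the position of $l$ relative to $k$ and on the parity of the powers of $W_{il}$, rather than any deep new idea, is the core technical work.
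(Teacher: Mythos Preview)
Your overall decomposition into four pieces and the reduction to $\esp[|\varepsilon_{k,5}^{(N)}|]=o(N)$ are correct and match the paper. Your treatment of the two $\sigma_N^2$-correction terms is also essentially the paper's (the paper uses Lemma~\ref{technicalbound} rather than Cauchy--Schwarz on $j$, but the bounds coincide).

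For the two hard terms your route diverges from the paper's. The paper never splits into $l<k$ and $l>k$ and never expands $R^{(k)}$ via \eqref{remove} here. For the $W_{il}^2$-piece ($a_k^{(N)}$ in the paper's notation) it applies Cauchy--Schwarz directly on the $l$-sum to produce $\bigl(\sum_{l}|W_{il}|^4\bigr)^{1/2}$, whose expectation is $O(N^{-1/2})$ by the fourth-moment assumption; combined with Lemma~\ref{technicalbound} for the other factor this yields $\|\sigma_N^4\sum_k\tilde R(z_1)_{kk}\tilde R(z_2)_{kk}a_k^{(N)}\|_{L^2}=O(N^{-1/2})$. For the centred piece ($b_k^{(N)}$) the paper splits
\[
|W_{il}|^2R^{(kil)}(z_1)_{ll}-\sigma_N^2\esp[R_N(z_1)_{ll}]
=(|W_{il}|^2-\sigma_N^2)\esp[R_N(z_1)_{ll}]
+|W_{il}|^2\bigl(R^{(kil)}(z_1)_{ll}-\esp[R_N(z_1)_{ll}]\bigr),
\]
then uses independence of the $W_{il}$ in $l$ for the first part (a pure variance bound) and the concentration estimate $\esp[|R^{(kil)}(z_1)_{ll}-\esp[R_N(z_1)_{ll}]|^2]=O(N^{-1})$ together with independence of $W_{il}$ from $R^{(kil)}$ for the second. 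No conditional-expectation bookkeeping is needed.

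Your measurability-based approach can be pushed through, but it is more intricate and contains a slip: for $l>k$ the conditional average of $|W_{il}|^2-\sigma_N^2$ is $0$, not $\kappa_N$ (which is a fourth cumulant). This is harmless for the conclusion---zero is better---but it shows the mechanism you describe is not quite right. Your claimed $O(1)$ bounds are also optimistic in places (several contributions are only $O(N^{1/2})$), though still $o(N)$. The paper's direct moment argument is shorter because it exploits the smallness of $\esp[|W_{il}|^4]=O(N^{-2})$ and $\esp[(|W_{il}|^2-\sigma_N^2)^2]=O(N^{-2})$ head-on, rather than routing through $\tau_N$ and the truncation bound $|W_{il}|\le\delta_N$.
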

It remains to analyze the first two terms of the right-hand side. Let us focus on the second one. Use again \eqref{remove} to remove the dependence between $W_{il}$ and $R^{(k)}(z_2)$.
\begin{align*}
 \sum_{i,j<k}\tilde{R}(z_1)_{ii} \sum_{l<k} & W_{il}\mathbb{E}_{\leq k-1}[R^{(kil)}(z_1)_{lj}]\mathbb{E}_{\leq k-1}[R^{(k)}(z_2)_{ji}]\\
 & = \sum_{i,j<k}\tilde{R}(z_1)_{ii}\sum_{l<k}W_{il}\mathbb{E}_{\leq k-1}[R^{(kil)}(z_1)_{lj}]\mathbb{E}_{\leq k-1}[R^{(kil)}(z_2)_{ji}]\\
 & + \sum_{i,j<k}\tilde{R}(z_1)_{ii}\sum_{l<k}(1-\frac{1}{2}\delta_{il})W_{il}^2\mathbb{E}_{\leq k-1}[R^{(kil)}(z_1)_{lj}]\mathbb{E}_{\leq k-1}[R^{(kil)}(z_2)_{ji}R^{(k)}(z_2)_{li}]\\
 & + \sum_{i,j<k}\tilde{R}(z_1)_{ii}\sum_{l<k}(1-\frac{1}{2}\delta_{il})|W_{il}|^2\mathbb{E}_{\leq k-1}[R^{(kil)}(z_1)_{lj}]\mathbb{E}_{\leq k-1}[R^{(kil)}(z_2)_{jl}R^{(k)}(z_2)_{ii}].
\end{align*}
Heuristically, consider that $|W_{il}|^2,\, i\neq l,$ is close to its expectation $\sigma_N^2$, $R^{(k)}(z_2)_{ii}\approx \tilde{R}(z_2)_{ii}$ and replace $R^{(kil)}$ by $R^{(k)}$. Then the last term of the right-hand side becomes 
\[\sigma_N^2\Big(\sum_{i<k}\tilde{R}(z_1)_{ii}\tilde{R}(z_2)_{ii}\Big)\Big(\sum_{l,j<k}\mathbb{E}_{\leq k-1}[R^{(k)}(z_1)_{lj}]\mathbb{E}_{\leq k-1}[R^{(k)}(z_2)_{jl}]\Big).\]

Subtract this quantity to both sides of equation in Lemma \ref{lem_2nd_term_simplification1}. This yields the following Lemma, whose proof is postponed to the end of the paragraph.
\begin{lem}\label{lem_2nd_term_simplification2}
\begin{align*}
 (1-\sigma_N^2\sum_{i<k}\tilde{R}(z_1)_{ii}\tilde{R}(z_2)_{ii})\sum_{i,j<k}\mathbb{E}_{\leq k-1}[R^{(k)}(z_1)_{ij}] & \mathbb{E}_{\leq k-1}[R^{(k)}(z_2)_{ji}]\\
 & = \sum_{i<k}\tilde{R}(z_1)_{ii}\mathbb{E}_{\leq k-1}[R^{(k)}(z_2)_{ii}]+\varepsilon_{k,6}^{(N)},
\end{align*}
with $\sigma_N^4\sum_{k=1}^N\tilde{R}(z_1)_{kk}\tilde{R}(z_2)_{kk}\varepsilon_{k,6}^{(N)} \underset{N \to +\infty}{\longrightarrow} 0$ in probability.
\end{lem}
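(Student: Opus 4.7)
The strategy is to continue the analysis of
$$T_k := \sum_{i,j<k}\tilde{R}(z_1)_{ii}\sum_{l<k}W_{il}\,\mathbb{E}_{\leq k-1}[R^{(kil)}(z_1)_{lj}]\,\mathbb{E}_{\leq k-1}[R^{(k)}(z_2)_{ji}],$$
the second summand on the right-hand side of Lemma \ref{lem_2nd_term_simplification1}, by removing the remaining dependence between $W_{il}$ and $R^{(k)}(z_2)$. Applying the resolvent identity \eqref{remove} to expand $R^{(k)}(z_2)_{ji}$ in terms of $R^{(kil)}(z_2)$ splits $T_k = T_k^{(\mathrm{lin})} + T_k^{(\tau)} + T_k^{(\sigma)}$, where the three summands are respectively linear in $W_{il}$, proportional to $W_{il}^2$, and proportional to $|W_{il}|^2$.

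The dominant piece is $T_k^{(\sigma)}$. Following the heuristic given just before the statement, one replaces $|W_{il}|^2$ by its expectation $\sigma_N^2$ (the diagonal case $i=l$ contributes only $O(N^2)$ terms and is absorbed into the error after weighting by $\sigma_N^4\tilde{R}(z_1)_{kk}\tilde{R}(z_2)_{kk}$), $R^{(kil)}$ by $R^{(k)}$ using the operator bound \eqref{removebound}, and $R^{(k)}(z_2)_{ii}$ by $\tilde{R}(z_2)_{ii}$ via the $L^2$ estimate \eqref{eq_approximation_Rk_tildeR}. The resulting main contribution is
$$\sigma_N^2\Big(\sum_{i<k}\tilde{R}(z_1)_{ii}\tilde{R}(z_2)_{ii}\Big)\sum_{l,j<k}\mathbb{E}_{\leq k-1}[R^{(k)}(z_1)_{lj}]\,\mathbb{E}_{\leq k-1}[R^{(k)}(z_2)_{jl}],$$
which is precisely $\sigma_N^2\bigl(\sum_{i<k}\tilde{R}(z_1)_{ii}\tilde{R}(z_2)_{ii}\bigr)$ times the sum $S_k$ appearing on the left-hand side of the claimed identity. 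Subtracting this term from both sides of the decomposition in Lemma \ref{lem_2nd_term_simplification1} produces the desired factorisation with $\bigl(1-\sigma_N^2\sum_{i<k}\tilde{R}(z_1)_{ii}\tilde{R}(z_2)_{ii}\bigr)$ on the left-hand side, the main term $\sum_{i<k}\tilde{R}(z_1)_{ii}\mathbb{E}_{\leq k-1}[R^{(k)}(z_2)_{ii}]$, and a remainder $\varepsilon_{k,6}^{(N)}$ collecting $\varepsilon_{k,5}^{(N)}$, the contributions $T_k^{(\mathrm{lin})}$ and $T_k^{(\tau)}$, and the approximation errors just introduced.

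It remains to establish $\sigma_N^4\sum_{k=1}^N\tilde{R}(z_1)_{kk}\tilde{R}(z_2)_{kk}\,\varepsilon_{k,6}^{(N)} \to 0$ in probability. The approximation errors are controlled via Cauchy--Schwarz together with \eqref{removebound} and \eqref{eq_approximation_Rk_tildeR}, exactly as in the treatment of $\varepsilon_{k,5}^{(N)}$ in the proof of Lemma \ref{lem_2nd_term_simplification1}. The $\tau_N$-contribution $T_k^{(\tau)}$ carries a prefactor $\sigma_N^4\tau_N = O(N^{-5})$ against a triple sum of natural size $O(N^3)$ and an outer sum of size $N$, producing an overall $O(N^{-1})$ bound. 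The most delicate step is the linear-in-$W_{il}$ term: although its expectation vanishes because $W_{il}$ is centred and independent of the matrix $M^{(kil)}$ (and hence of $R^{(kil)}$), one must control the $L^2$ norm of the full weighted sum. Exploiting the mutual independence of $\{W_{il}\}_{i\le l}$ (modulo Hermitian symmetry), the variance reduces to a weighted sum of squared coefficients; combining Lemma \ref{technicalbound} to handle the partial sums $\sum_{j<k}$ and $\sum_{l<k}$ with the trace bound $\sigma_N^2\Tr\bigl(R^{(k)}(z)^*R^{(k)}(z)\bigr) = O(1)$ (used in Lemma \ref{lem_well_defined}) yields the required $o(1)$ decay, the $\sigma_N^4$ prefactor providing the necessary smallness.
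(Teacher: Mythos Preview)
Your overall strategy (expand $R^{(k)}(z_2)$ via \eqref{remove}, extract the main contribution from the $|W_{il}|^2$ piece, and control the remaining three pieces) is exactly the paper's. However, two of your error estimates do not go through as written.

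\textbf{The linear piece $T_k^{(\mathrm{lin})}$.} Your claim that ``the variance reduces to a weighted sum of squared coefficients'' by independence of the $W_{il}$'s is not correct. After Minkowski on the $i$-sum, the inner random variable is $\sum_{l<k}W_{il}\,\alpha_{il}$ with
\[
\alpha_{il}=\sum_{j<k}\mathbb{E}_{\leq k-1}\bigl[R^{(kil)}(z_1)_{lj}\bigr]\,\mathbb{E}_{\leq k-1}\bigl[R^{(kil)}(z_2)_{ji}\bigr].
\]
The coefficient $\alpha_{il}$ depends on $R^{(kil)}$, which still contains every entry $W_{il'}$ with $l'\neq l$. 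Hence for $l\neq l'$ the cross term $\mathbb{E}[W_{il}\overline{W_{il'}}\alpha_{il}\overline{\alpha_{il'}}]$ does \emph{not} vanish, and the $L^2$ norm does not collapse to a diagonal sum. The paper handles this by the Bai--Silverstein decoupling: write $\alpha_{il}=\alpha_{ill'}+\beta_{ill'}+\gamma_{ill'}$ where $\alpha_{ill'}$ is built from $R^{(kilil')}$; then $W_{il},W_{il'},\alpha_{ill'},\alpha_{il'l}$ are genuinely independent so the main cross term does vanish, and the remaining four cross terms $\mathbb{E}[W_{il}\overline{W_{il'}}\beta_{ill'}\overline{\beta_{il'l}}]$, etc., are each bounded by $O(\delta_N)$ uniformly in $k$ after using \eqref{removebound} and the column bound $\sum_l|R^{(k)}(z_1)_{li}|^2\le|\Im z_1|^{-2}$. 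This extra decoupling step is the missing idea.

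\textbf{The $W_{il}^2$ piece $T_k^{(\tau)}$.} Your power count is off: $\sigma_N^2=O(N^{-1})$, so $\sigma_N^4\tau_N=O(N^{-3})$, not $O(N^{-5})$; with a triple sum of size $N^3$ and an outer sum of size $N$ you obtain $O(N)$, not $O(N^{-1})$. Moreover $W_{il}^2$ cannot simply be replaced by $\tau_N$, because the remaining factor contains $R^{(k)}(z_2)_{li}$, which depends on $W_{il}$. The paper instead bounds this term in $L^2$ via Cauchy--Schwarz on the $l$-sum, pairing $\sum_{l<k}|W_{il}|^4$ (of size $O(N^{-1})$ in expectation) against $\sum_{l<k}|R^{(k)}(z_2)_{li}|^2\le|\Im z_2|^{-2}$, and using Lemma~\ref{technicalbound} on the $j$-sum; this yields $O(N^{-1/2})$ for the full weighted sum.
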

We are ready now to derive the contribution of this whole second term.

\begin{prop}\label{prop_2nd_term} The following convergence holds in probability:
\begin{align*}
& \sigma_N^4\sum_{k=1}^N\tilde{R}(z_1)_{kk}\tilde{R}(z_2)_{kk}\sum_{i,j<k}\mathbb{E}_{\leq k-1}[R^{(k)}(z_1)_{ij}]\mathbb{E}_{\leq k-1}[R^{(k)}(z_2)_{ji}]\\
& \underset{N \to +\infty}{\longrightarrow} -\log\Big(1-\sigma^2\int_{\mathbb{R}}\frac{\nu_{\infty}(dx)}{(\omega(z_1)-x)(\omega(z_2)-x)}\Big)-\sigma^2\int_{\mathbb{R}}\frac{\nu_{\infty}(dx)}{(\omega(z_1)-x)(\omega(z_2)-x)} .
\end{align*}
\end{prop}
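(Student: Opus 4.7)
The plan is to apply Lemma~\ref{lem_2nd_term_simplification2} to solve for the double sum and then recognise the resulting expression as a discrete Riemann--Stieltjes sum that telescopes against the primitive $F(s):=-s-\log(1-s)$ of $s/(1-s)$. Set
\[S_k^N:=\sigma_N^2\sum_{i<k}\tilde{R}(z_1)_{ii}\tilde{R}(z_2)_{ii}.\]
By Lemma~\ref{lem_well_defined}, $\sup_{k,N}|S_k^N|<1$ (for $N$ large), so $(1-S_k^N)^{-1}$ is a uniformly bounded deterministic quantity. Lemma~\ref{lem_2nd_term_simplification2} yields
\[
\sum_{i,j<k}\mathbb{E}_{\leq k-1}[R^{(k)}(z_1)_{ij}]\mathbb{E}_{\leq k-1}[R^{(k)}(z_2)_{ji}]=\frac{\sum_{i<k}\tilde{R}(z_1)_{ii}\mathbb{E}_{\leq k-1}[R^{(k)}(z_2)_{ii}]+\varepsilon_{k,6}^{(N)}}{1-S_k^N}.
\]

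The next step is to reduce the problem to a purely deterministic quantity by substituting $\tilde{R}(z_2)_{ii}$ for $\mathbb{E}_{\leq k-1}[R^{(k)}(z_2)_{ii}]$ and absorbing $\varepsilon_{k,6}^{(N)}$. Since conditional expectation is an $L^2$-contraction to constants, \eqref{eq_approximation_Rk_tildeR} gives $\mathbb{E}[|\mathbb{E}_{\leq k-1}[R^{(k)}(z_2)_{ii}]-\tilde{R}(z_2)_{ii}|]=O(N^{-1/2})$, uniformly in $i\neq k$. Combined with the deterministic bounds $|\tilde{R}(z_j)_{\ell\ell}|\leq |\Im z_j|^{-1}$, $\sup_k|1-S_k^N|^{-1}<\infty$, and $\sigma_N^4=O(N^{-2})$, the $L^1$-norm of the substitution error in the full sum over $k$ is at most of order $N\cdot N^{-2}\cdot N\cdot N^{-1/2}=O(N^{-1/2})\to 0$. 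The contribution of $\varepsilon_{k,6}^{(N)}$ to the sum vanishes in probability by the same estimates underpinning Lemma~\ref{lem_2nd_term_simplification2}, since the additional factor $(1-S_k^N)^{-1}$ is bounded. It therefore remains to analyse the deterministic quantity
\[
\sigma_N^4\sum_{k=1}^N\tilde{R}(z_1)_{kk}\tilde{R}(z_2)_{kk}\cdot\frac{\sum_{i<k}\tilde{R}(z_1)_{ii}\tilde{R}(z_2)_{ii}}{1-S_k^N}=\sum_{k=1}^N(S_{k+1}^N-S_k^N)\,\frac{S_k^N}{1-S_k^N}.
\]

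The conclusion follows by Taylor telescoping. The derivatives $F'(s)=s/(1-s)$ and $F''(s)=(1-s)^{-2}$ are uniformly bounded on any compact subset of the disk $\{|s|<1\}$, so the integral form of Taylor's remainder yields
\[
F(S_{k+1}^N)-F(S_k^N)=(S_{k+1}^N-S_k^N)\,\frac{S_k^N}{1-S_k^N}+O\big(|S_{k+1}^N-S_k^N|^2\big),
\]
uniformly in $k$. Since $|S_{k+1}^N-S_k^N|=\sigma_N^2|\tilde{R}(z_1)_{kk}\tilde{R}(z_2)_{kk}|=O(N^{-1})$, summation collapses to $F(S_{N+1}^N)-F(0)+O(N^{-1})=F(S_{N+1}^N)+O(N^{-1})$. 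Writing
\[S_{N+1}^N=N\sigma_N^2\int_{\mathbb{R}}\frac{\nu_N(dx)}{(\tilde{\omega}_N(z_1)-x)(\tilde{\omega}_N(z_2)-x)}\]
and using $N\sigma_N^2\to\sigma^2$, $\nu_N\Rightarrow\nu_\infty$, and the uniform convergence $\tilde{\omega}_N\to\omega$ on compact subsets of $\mathbb{C}\setminus\mathbb{R}$, one obtains $S_{N+1}^N\to T_\infty:=\sigma^2\int_{\mathbb{R}}\frac{\nu_\infty(dx)}{(\omega(z_1)-x)(\omega(z_2)-x)}$. The resulting limit $F(T_\infty)=-T_\infty-\log(1-T_\infty)$ is precisely the announced right-hand side. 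The main subtlety is to guarantee that the discrete trajectory $(S_k^N)_{k=1}^{N+1}$ stays in a fixed compact subset of $\{|s|<1\}$ both to keep the principal branch of $\log(1-\cdot)$ single-valued along it and to apply the Taylor expansion uniformly; this is precisely what Lemma~\ref{lem_well_defined} provides.
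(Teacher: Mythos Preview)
Your proof is correct and follows essentially the same approach as the paper's: both divide through by $1-S_k^N$ using Lemma~\ref{lem_2nd_term_simplification2}, replace $\mathbb{E}_{\leq k-1}[R^{(k)}(z_2)_{ii}]$ by $\tilde R(z_2)_{ii}$ via \eqref{eq_approximation_Rk_tildeR}, and then identify the resulting deterministic sum as a Riemann sum for a primitive of $s\mapsto s/(1-s)$. The only cosmetic difference is that the paper packages the telescoping step as $\int_0^1 f_{k,k,N}(t)\,dt=-\log(1-S_{k+1}^N)+\log(1-S_k^N)$ and bounds $|f_{k,k,N}(0)-f_{k,k,N}(t)|$, whereas you apply the second-order Taylor expansion of $F(s)=-s-\log(1-s)$ directly; these are the same estimate.
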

\begin{proof}
We follow Section B.5.3 of \cite{JiLee}. Set $$f_{j,k,N}(t):=\frac{\sigma_N^2\tilde{R}(z_1)_{jj}\tilde{R}(z_2)_{jj}}{1-\sigma_N^2\sum_{i<k}\tilde{R}(z_1)_{ii}\tilde{R}(z_2)_{ii}-t\sigma_N^2\tilde{R}(z_1)_{kk}\tilde{R}(z_2)_{kk}}.$$
From Lemma \ref{lem_well_defined}, for large enough $N$, $f_{j,k,N}$ is well-defined and bounded on $[0;1]$, uniformly in $j,k$, by $C\sigma_N^2$. Moreover, $\mathbb{E}_{\leq k-1}[R^{(k)}(z)_{ii}]-\tilde{R}(z)_{ii}\underset{N\to +\infty}{\longrightarrow} 0$ in $L^2$, uniformly in $i,k$ (because of Jensen inequality). 
Therefore, Lemma \ref{lem_2nd_term_simplification2} yields the following:
\begin{align*}
 \sigma_N^2\sum_{i,j<k}\mathbb{E}_{\leq k-1}[R^{(k)}(z_1)_{ij}]\mathbb{E}_{\leq k-1}[R^{(k)}(z_2)_{ji}]
 & = \sum_{i<k}f_{i,k,N}(0)+\varepsilon_{k,7}^{(N)},
\end{align*}
with $E_N:=\sigma_N^2\sum_{k=1}^N\tilde{R}(z_1)_{kk}\tilde{R}(z_2)_{kk}\varepsilon_{k,7}^{(N)} \underset{N \to +\infty}{\longrightarrow} 0$ in probability.

As a consequence,
\begin{align*}
 \sigma_N^4\sum_{k=1}^N\tilde{R}(z_1)_{kk}\tilde{R}(z_2)_{kk} & \sum_{i,j<k}\mathbb{E}_{\leq k-1}[R^{(k)}(z_1)_{ij}]\mathbb{E}_{\leq k-1}[R^{(k)}(z_2)_{ji}]\\
 & = \sum_{k=1}^N\sigma_N^2\tilde{R}(z_1)_{kk}\tilde{R}(z_2)_{kk}\frac{\sigma_N^2\sum_{i<k}\tilde{R}(z_1)_{ii}\tilde{R}(z_2)_{ii}}{1-\sigma_{N}^2\sum_{i<k}\tilde{R}(z_1)_{ii}\tilde{R}(z_2)_{ii}} +E_N\\
 & = \sum_{k=1}^Nf_{k,k,N}(0)-\sigma_N^2\sum_{k=1}^N\tilde{R}(z_1)_{kk}\tilde{R}(z_2)_{kk}+E_N.
\end{align*}
For $t\in[0;1]$,
\[|f_{k,k,N}(0) -f_{k,k,N}(t)|=|tf_{k,k,N}(0)f_{k,k,N}(t)|\leq C^2\sigma_N^4.\]
Integrating with respect to $t\in[0;1]$ and summing on $k$, one obtains:
\[\Big|\sum_{k=1}^Nf_{k,k,N}(0)+\log(1-\sigma_N^2\sum_{i=1}^N\tilde{R}(z_1)_{ii}\tilde{R}(z_2)_{ii})\Big|\leq C^2N\sigma_N^4.\]
Finally, 
\begin{align*}
& \sigma_N^4\sum_{k=1}^N\tilde{R}(z_1)_{kk}\tilde{R}(z_2)_{kk}\sum_{i,j<k}\mathbb{E}_{\leq k-1}[R^{(k)}(z_1)_{ij}]\mathbb{E}_{\leq k-1}[R^{(k)}(z_2)_{ji}]\\
& \underset{N\to +\infty}{\longrightarrow} -\log\Big(1-\sigma^2\int_{\mathbb{R}}\frac{\nu_{\infty}(dx)}{(\omega(z_1)-x)(\omega(z_2)-x)}\Big)-\sigma^2\int_{\mathbb{R}}\frac{\nu_{\infty}(dx)}{(\omega(z_1)-x)(\omega(z_2)-x)} .
\end{align*}
\end{proof}

It remains to prove Lemma \ref{lem_2nd_term_simplification1} and Lemma \ref{lem_2nd_term_simplification2}.

\begin{proof}[Proof of Lemma \ref{lem_2nd_term_simplification1}]
 Recall that
 \begin{align*}
  \varepsilon_{k,5}^{(N)} & = \sum_{i,j<k}\sum_{l \neq k}(1-\frac{1}{2}\delta_{il})\tilde{R}(z_1)_{ii}\esp_{\leq k-1}[W_{il}^2R^{(kil)}(z_1)_{li}R^{(k)}(z_1)_{lj}\esp_{\leq k-1}[R^{(k)}(z_2)_{ji}]]\\
   & + \sum_{i,j<k}\esp_{\leq k-1}[\sum_{l\neq k}(1-\frac{1}{2}\delta_{il})\tilde{R}(z_1)_{ii}\Big(|W_{il}|^2R^{(kil)}(z_1)_{ll}-\sigma_N^2\esp[R_N(z_1)_{ll}]\Big)R^{(k)}(z_1)_{ij}\esp_{\leq k-1}[R^{(k)}(z_2)_{ji}]]\\
   & - \frac{\sigma_N^2}{2}\sum_{i,j<k}\tilde{R}(z_1)_{ii}\esp[R_N(z_1)_{ii}]\esp_{\leq k-1}[R^{(k)}(z_1)_{ij}\esp_{\leq k-1}[R^{(k)}(z_2)_{ji}]]\\
   & - \sigma_N^2\sum_{i,j<k}\tilde{R}(z_1)_{ii}\esp[R_N(z_1)_{kk}]\esp_{\leq k-1}[R^{(k)}(z_1)_{ij}\esp_{\leq k-1}[R^{(k)}(z_2)_{ji}]].
 \end{align*}
 It writes as a sum of four terms, which will be denoted by $a_k^{(N)}$, $b_k^{(N)}$, $c_k^{(N)}$ and $d_k^{(N)}$.
 
 From Lemma \ref{technicalbound}, note that, in all terms,
 \[ \big|\sum_{j<k}R^{(k)}(z_1)_{ij}\esp_{\leq k-1}[R^{(k)}(z_2)_{ji}]\big| \leq \|R^{(k)}(z_1) \| \|\esp_{\leq k-1}[R^{(k)}(z_2)]\| \leq \frac{1}{|\Im z_1 \Im z_2|}. \]
Therefore,
\begin{align*}
 \Big|\sigma_N^4\sum_{k=1}^N\tilde{R}(z_1)_{kk}\tilde{R}(z_2)_{kk}d_k^{(N)} \Big| & \leq \frac{\sigma_N^6}{|\Im z_1\Im z_2|}\sum_{k=1}^N|\tilde{R}(z_1)_{kk}||\tilde{R}(z_2)_{kk}|\sum_{i<k}|\tilde{R}(z_1)_{ii}||\esp[R_N(z_1)_{kk}]|\\
 & \leq \frac{N^2\sigma_N^6}{|\Im z_1|^4|\Im z_2|^2}.
\end{align*}
As a consequence, 
\[ \Big|\sigma_N^4\sum_{k=1}^N\tilde{R}(z_1)_{kk}\tilde{R}(z_2)_{kk}d_k^{(N)} \Big| \underset{N\to +\infty}{\longrightarrow} 0\]
in probability. 

Similarly,
\[ \Big|\sigma_N^4\sum_{k=1}^N\tilde{R}(z_1)_{kk}\tilde{R}(z_2)_{kk}c_k^{(N)}\Big| \leq \frac{N^2\sigma_N^6}{2|\Im z_1|^4|\Im z_2|^2}  \underset{N\to +\infty}{\longrightarrow} 0\]
in probability. 

In order to study the convergence in probability of the first term, we consider its $L^2$ norm. Indeed, remark that, by Minkowski inequality,
\begin{align*}
 \|\sigma_N^4\sum_{k=1}^N\tilde{R}(z_1)_{kk}\tilde{R}(z_2)_{kk}a_k^{(N)}\|_{L^2} & \leq \frac{\sigma_N^4}{|\Im z_1 \Im z_2|}\sum_{k=1}^N\|a_k^{(N)}\|_{L^2}.
 \end{align*}
 Then, again by Minkowski inequality (sum on $i$) and by Jensen inequality applied to $\esp_{\leq k-1}$,
 \begin{align*}
  & \|\sigma_N^4\sum_{k=1}^N\tilde{R}(z_1)_{kk}\tilde{R}(z_2)_{kk}a_k^{(N)}\|_{L^2}\\
 & \leq \frac{\sigma_N^2}{|\Im z_1 \Im z_2|}\sum_{k=1}^N\sum_{i<k}\Big\|\sum_{l \neq k}|(1-\frac{1}{2}\delta_{il})\tilde{R}(z_1)_{ii}W_{il}^2R^{(kil)}(z_1)_{li}\sum_{j<k}R^{(k)}(z_1)_{lj}\esp_{\leq k-1}[R^{(k)}(z_2)_{ji}]|\Big\|_{L^2}.
 \end{align*}
 Use now Cauchy-Schwarz inequality on the $l$-sum to get the following.
 \begin{align*}
 \| \sigma_N^4 \sum_{k=1}^N\tilde{R}(z_1)_{kk} & \tilde{R}(z_2)_{kk}a_k^{(N)}\|_{L^2}\\
 & \leq \frac{\sigma_N^4}{|\Im z_1|^3|\Im z_2|}\sum_{k=1}^N\sum_{i<k}\esp\Big[\sum_{l \neq k}|W_{il}|^4\sum_{l\neq k}\Big|\sum_{j<k}R^{(k)}(z_1)_{lj}\esp_{\leq k-1}[R^{(k)}(z_2)_{ji}]\Big|^2\Big]^{1/2}\\
 & \leq \frac{\sigma_N^4}{|\Im z_1|^4|\Im z_2|^2}\sum_{k=1}^N\sum_{i<k}\esp\Big[\sum_{l \neq k}|W_{il}|^4\Big]^{1/2},
\end{align*}
from Lemma \ref{technicalbound}. Then,
\[ \|\sigma_N^4 \sum_{k=1}^N\tilde{R}(z_1)_{kk}\tilde{R}(z_2)_{kk}a_k^{(N)}\|_{L^2} \leq  \frac{N^2\sigma_N^4}{|\Im z_1|^4|\Im z_2|^2}\big(\delta_N^2s_N^2+(N-2)m_N\big)^{1/2}=O(N^{-1/2}).\]
As a consequence, $\sigma_N^4 \sum_{k=1}^N\tilde{R}(z_1)_{kk}\tilde{R}(z_2)_{kk}a_k^{(N)} \underset{N\to +\infty}{\longrightarrow} 0$ in probability.

We turn now to the second term. Note that $\sum_{j<k}R^{(k)}(z_1)_{ij}\esp_{\leq k-1}[R^{(k)}(z_2)_{ji}]$ does not involve $l$. Therefore,
\begin{align*}
 & \sigma_N^4 \Big|\sum_{k=1}^N  \tilde{R}(z_1)_{kk} \tilde{R}(z_2)_{kk}b_k^{(N)}\Big|\\
 & \leq \frac{\sigma_N^4}{|\Im z_1\Im z_2|}\sum_{k=1}^N \sum_{i<k}\Big|\sum_{l \neq k}(1-\frac{1}{2}\delta_{il})\big(|W_{il}|^2R^{(kil)}(z_1)_{ll}-\sigma_N^2\esp[R_N(z_1)_{ll}]\big)\sum_{j<k}R^{(k)}(z_1)_{ij}\esp_{\leq k-1}[R^{(k)}(z_2)_{ji}]\Big|\\
 & \leq \frac{\sigma_N^4}{|\Im z_1|^2|\Im z_2|^2}\sum_{k=1}^N \sum_{i<k}\Big|\sum_{l \neq k}(1-\frac{1}{2}\delta_{il})\big(|W_{il}|^2R^{(kil)}(z_1)_{ll}-\sigma_N^2\esp[R(z_1)_{ll}]\big)\Big|\\
 & \leq \frac{\sigma_N^4}{|\Im z_1|^2|\Im z_2|^2}\sum_{k=1}^N \sum_{i<k}\bigg(\Big|\sum_{l \neq k}(1-\frac{1}{2}\delta_{il})\big(|W_{il}|^2-\sigma_N^2\big)\esp[R(z_1)_{ll}]\Big|\\
 & \hspace{5cm} + \Big|\sum_{l \neq k}(1-\frac{1}{2}\delta_{il})|W_{il}|^2\big(R^{(kil)}(z_1)_{ll}-\esp[R(z_1)_{ll}]\big)\Big|\bigg).
\end{align*}
The $L^2$ norm of the second part goes to zero as $N$ goes to infinity. Indeed, by Minkowski inequality,
\begin{align*}
\|\frac{\sigma_N^4}{|\Im z_1|^2|\Im z_2|^2}\sum_{k=1}^N\sum_{i<k} & \Big|\sum_{l \neq k}(1-\frac{1}{2}\delta_{il})|W_{il}|^2\big(R^{(kil)}(z_1)_{ll}-\esp[R(z_1)_{ll}]\big)\Big|\|_{L^2}\\
  & \leq  \frac{\sigma_N^4}{|\Im z_1|^2|\Im z_2|^2}\sum_{k=1}^N\sum_{i<k}\sum_{l\neq k} \| |W_{il}|^2\big|R^{(kil)}(z_1)_{ll}-\esp[R(z_1)_{ll}]\big| \|_{L^2}\\
  & \leq \frac{\sigma_N^4}{|\Im z_1|^2|\Im z_2|^2}\sum_{k=1}^N\sum_{i<k}\sum_{l\neq k} \||W_{il}|^2\|_{L^2}\|R^{(kil)}(z_1)_{ll}-\esp[R(z_1)_{ll}]\|_{L^2},
\end{align*}
by independence. Recall now that $\esp\big[\big|R^{(kil)}(z_1)_{ll}-\esp[R(z_1)_{ll}]\big|^2\big]=O(N^{-1})$, uniformly in $i,k,l$. As a consequence,
\[\|\frac{\sigma_N^4}{|\Im z_1|^2|\Im z_2|^2}\sum_{k=1}^N\sum_{i<k} \Big|\sum_{l \neq k}(1-\frac{1}{2}\delta_{il})|W_{il}|^2\big(R^{(kil)}(z_1)_{ll}-\esp[R(z_1)_{ll}]\big)\|_{L^2} =O(N^{-1/2}). \]

 It remains to deal with the first part. We consider its $L^2$ norm. 
 \begin{align*}
 \| \frac{\sigma_N^4}{|\Im z_1|^2|\Im z_2|^2}&\sum_{k=1}^N \sum_{i<k}\big|\sum_{l \neq k}(1-\frac{1}{2}\delta_{il})\big(|W_{il}|^2-\sigma_N^2\big)\esp[R(z_1)_{ll}]\big|\|_{L^2}\\
  & \leq \frac{\sigma_N^4}{|\Im z_1|^2|\Im z_2|^2}\sum_{k=1}^N \sum_{i<k}\|\sum_{l \neq k}(1-\frac{1}{2}\delta_{il})\big(|W_{il}|^2-\sigma_N^2\big)\esp[R(z_1)_{ll}]\|_{L^2}\\
  & \leq \frac{\sigma_N^4}{|\Im z_1|^2|\Im z_2|^2}\sum_{k=1}^N \sum_{i<k}\Big(\sum_{l \neq k}\esp\Big[\big(|W_{il}|^2-\sigma_N^2\big)^2|\esp[R(z_1)_{ll}]|^2\Big]\Big)^{1/2},
 \end{align*}
by independence. Then,
\begin{align*}
  \frac{\sigma_N^4}{|\Im z_1|^2|\Im z_2|^2}\esp\Big[\Big|&\sum_{k=1}^N \sum_{i<k}\Big|\sum_{l \neq k}(1-\frac{1}{2}\delta_{il})\big(|W_{il}|^2-\sigma_N^2\big)\esp[R(z_1)_{ll}]\Big|\Big|^2\Big]^{1/2}\\
  & \leq \frac{N^2\sigma_N^4}{|\Im z_1|^3|\Im z_2|^2}\big(\delta_N^2s_N^2-2\sigma_N^2s_N^2+\sigma_N^4+(N-2)(m_N-\sigma_N^4)\big)^{1/2}=O(N^{-1/2}).
 \end{align*}
 As a consequence, $\sigma_N^4 \sum_{k=1}^N  \tilde{R}(z_1)_{kk} \tilde{R}(z_2)_{kk}b_k^{(N)} \underset{N\to +\infty}{\longrightarrow} 0$ in probability, and
 \[\varepsilon_{k,5}^{(N)} \underset{N\to +\infty}{\longrightarrow} 0\] in probability.
\end{proof}

\begin{proof}[Proof of Lemma \ref{lem_2nd_term_simplification2}]
 Remark that
 \begin{align*}
  \varepsilon_{k,6}^{(N)} & = \varepsilon_{k,5}^{(N)} + \sum_{i,j<k}\tilde{R}(z_1)_{ii}\sum_{l<k}W_{il}\esp_{\leq k-1}[R^{(kil)}(z_1)_{lj}]\esp_{\leq k-1}[R^{(kil)}(z_2)_{ji}]\\
  & + \sum_{i,j<k}\tilde{R}(z_1)_{ii}\sum_{l<k}(1-\frac{1}{2}\delta_{il})W_{il}^2\esp_{\leq k-1}[R^{(kil)}(z_1)_{lj}]\esp_{\leq k-1}[R^{(kil)}(z_2)_{ji}R^{(k)}(z_2)_{li}]\\
  & + \sum_{i,j<k}\tilde{R}(z_1)_{ii}\sum_{l<k}\bigg((1-\frac{1}{2}\delta_{il})|W_{il}|^2\esp_{\leq k-1}[R^{(kil)}(z_1)_{lj}]\esp_{\leq k-1}[R^{(kil)}(z_2)_{jl}R^{(k)}(z_2)_{ii}]\\
  & \hspace{5cm} - \sigma_N^2\esp_{\leq k-1}[R^{(k)}(z_1)_{lj}]\esp_{\leq k-1}[R^{(k)}(z_2)_{jl}]\tilde{R}(z_2)_{ii}\bigg)\\
  & := \varepsilon_{k,5}^{(N)} +e_k^{(N)}+f_k^{(N)}+g_k^{(N)}.
 \end{align*}
We focus on $e_k^{(N)}$. Its $L^2$ norm is bounded by:
\[\frac{1}{|\mathcal{I}z_1|}\sum_{i<k}\|\sum_{l<k}W_{il}\sum_{j<k}\esp_{\leq k-1}[R^{(kil)}(z_1)_{lj}]\esp_{\leq k-1}[R^{(kil)}(z_2)_{ji}]\|_{L^2} .\]
Develop the square of the $l$-sum: the sum of squares is bounded by
\begin{align*}
 \sigma_N^2 \sum_{l<k, l\neq i}\mathbb{E}  \Big[\big|\sum_{j<k}\esp_{\leq k-1}[R^{(kil)}(z_1)_{lj}] & \esp_{\leq k-1}[R^{(kil)}(z_2)_{ji}]\big|^2\Big]\\
 & + \ s_N^2\esp\Big[\big|\sum_{j<k}\esp_{\leq k-1}[R^{(kii)}(z_1)_{ij}]\esp_{\leq k-1}[R^{(kii)}(z_2)_{ji}]\big|^2\Big].
\end{align*}
Recall that $\|R^{(k)}(z)-R^{(kil)}(z)\| \leq 2\delta_N|\Im z|^{-2}$. Replacing $R^{(kil)}$ by $R^{(k)}$ yields the following.
\begin{align*}
 \sum_{l<k}\mathbb{E}\Big[\big|\sum_{j<k} & \esp_{\leq k-1}[R^{(kil)}(z_1)_{lj}]  \esp_{\leq k-1}[R^{(kil)}(z_2)_{ji}]\big|^2\Big]\\
 & \leq 3\sum_{l<k}\mathbb{E}\Big[\big|\sum_{j<k}R^{(k)}(z_1)_{lj}\esp_{\leq k-1}[R^{(k)}(z_2)_{ji}]\big|^2\Big] + O(N\delta_N)=O(N\delta_N),
\end{align*}
uniformly in $k$, using Lemma \ref{technicalbound}. Then the sum of squares is $O(\delta_N)$, uniformly in $k$.

It remains to control the sum of the cross terms $\esp[W_{il}\overline{W_{il'}}\alpha_{il}\overline{\alpha_{il'}}]$, where \[\alpha_{il}=\sum_{j<k}\esp_{\leq k-1}[R^{(kil)}(z_1)_{lj}]\esp_{\leq k-1}[R^{(kil)}(z_2)_{ji}].\]
Following Bai and Silverstein \cite{BaiSilbook}, we replace $R^{(kil')}$ and $R^{(kil)}$ by $R^{(kil'il)}$:
\begin{eqnarray*}
 \alpha_{il}&=&\sum_{j<k}\esp_{\leq k-1}[R^{(kilil')}(z_1)_{lj}]\esp_{\leq k-1}[R^{(kilil')}(z_2)_{ji}]\\
		&& + \sum_{j<k}\esp_{\leq k-1}\big[R^{(kil)}(z_1)_{lj}-R^{(kilil')}(z_1)_{lj}\big]\esp_{\leq k-1}[R^{(kil)}(z_2)_{ji}]\\
		&& + \sum_{j<k}\esp_{\leq k-1}[R^{(kilil')}(z_1)_{lj}]\esp_{\leq k-1}\big[R^{(kil)}(z_2)_{ji}-R^{(kilil')}(z_2)_{ji}\big]\\
		&& := \alpha_{ill'}+\beta_{ill'}+\gamma_{ill'}. 
\end{eqnarray*}

Note that, due to independence properties, the sum vanishes when $\alpha_{il}$ is replaced by $\alpha_{ill'}$ or when $\alpha_{il'}$ is replaced by $\alpha_{ill'}$. It remains to control the four error terms: $\esp[W_{il}\overline{W_{il'}}\beta_{ill'}\overline{\beta_{il'l}}]$, $\esp[W_{il}\overline{W_{il'}}\beta_{ill'}\overline{\gamma_{il'l}}]$, $\esp[W_{il}\overline{W_{il'}}\gamma_{ill'}\overline{\beta_{il'l}}]$ and $\esp[W_{il}\overline{W_{il'}}\gamma_{ill'}\overline{\gamma_{il'l}}]$.
The resolvent identity yields (note that we can remove one $\esp_{\leq k-1}$):
\begin{align*}
   \esp[W_{il} & \overline{W_{il'}}\beta_{ill'}\overline{\beta_{il'l}}]\\
  & = (1-\frac{1}{2}\delta_{il})(1-\frac{1}{2}\delta_{il'})\bigg\{\esp\Big[|W_{il}|^2|W_{il'}|^2R^{(kilil')}(z_1)_{li}\sum_{j<k}R^{(kil)}(z_1)_{l'j}\esp_{\leq k-1}[R^{(kil)}(z_2)_{ji}]\\
  & \hspace{4.5cm} \times \overline{R^{(kil'il)}(z_1)_{l'i}\sum_{j'<k}R^{(kil')}(z_1)_{lj'}\esp_{\leq k-1}[R^{(kil')}(z_2)_{j'i}]}\Big]\\
 & \hspace{2cm} + \esp\Big[W_{il}^2|W_{il'}|^2R^{(kilil')}(z_1)_{li}\sum_{j<k}R^{(kil)}(z_1)_{l'j}\esp_{\leq k-1}[R^{(kil)}(z_2)_{ji}]\\
 & \hspace{4.5cm} \times \overline{R^{(kil'il)}(z_1)_{l'l}\sum_{j'<k}R^{(kil')}(z_1)_{ij'}\esp_{\leq k-1}[R^{(kil')}(z_2)_{j'i}]}\Big]\\
 & \hspace{2cm}  + \esp\Big[|W_{il}|^2\overline{W_{il'}}^2R^{(kilil')}(z_1)_{ll'}\sum_{j<k}R^{(kil)}(z_1)_{ij}\esp_{\leq k-1}[R^{(kil)}(z_2)_{ji}]\\
 & \hspace{4.5cm} \times \overline{R^{(kil'il)}(z_1)_{l'i}\sum_{j'<k}R^{(kil')}(z_1)_{lj'}\esp_{\leq k-1}[R^{(kil')}(z_2)_{j'i}]}\Big]\\
 & \hspace{2cm} + \esp\Big[W_{il}^2\overline{W_{il'}}^2R^{(kilil')}(z_1)_{ll'}\sum_{j<k}R^{(kil)}(z_1)_{ij}\esp_{\leq k-1}[R^{(kil)}(z_2)_{ji}]\\
& \hspace{4.5cm} \times \overline{R^{(kil'il)}(z_1)_{l'l}\sum_{j'<k}R^{(kil')}(z_1)_{ij'}\esp_{\leq k-1}[R^{(kil')}(z_2)_{j'i}]}\Big]\bigg\}.
\end{align*}

The first term can be bounded as follows:
\begin{align*}
\Big|\sum_{l\neq l'}(1-\frac{1}{2}\delta_{il})(1-\frac{1}{2}\delta_{il'}) & \esp\Big[|W_{il}|^2|W_{il'}|^2R^{(kilil')}(z_1)_{li}\sum_{j<k}R^{(kil)}(z_1)_{l'j}\esp_{\leq k-1}[R^{(kil)}(z_2)_{ji}] \\
& \hspace{1cm} \times \overline{R^{(kil'il)}(z_1)_{l'i}\sum_{j'<k}R^{(kil')}(z_1)_{lj'}\esp_{\leq k-1}[R^{(kil')}(z_2)_{j'i}]}\Big]\Big|\\
& \leq \frac{1}{|\mathcal{I}z_1|^2|\Im z_2|^2}\sum_{l\neq l'}\esp[|W_{il}|^2|W_{il'}|^2|R^{(kilil')}(z_1)_{li}||\overline{R^{(kil'il)}(z_1)_{l'i}}|]\\
& \leq \frac{\sigma_N^4}{|\mathcal{I}z_1|^2|\Im z_2|^2}\sum_{l\neq l'}\esp[|R^{(kilil')}(z_1)_{li}||\overline{R^{(kil'il)}(z_1)_{l'i}}|] \ \text{by independence}\\
& \leq \frac{\sigma_N^4}{|\mathcal{I}z_1|^2|\Im z_2|^2}\sum_{l\neq l'}(\esp[|R^{(k)}(z_1)_{li}||\overline{R^{(k)}(z_1)_{l'i}}|]+O(\delta_N))\\
& \leq \frac{\sigma_N^4}{|\mathcal{I}z_1|^2|\Im z_2|^2}\big(\esp\big[\big(\sum_{l}|R^{(k)}(z_1)_{li}|\big)^2\big]+O(\delta_NN^2)\big)\\
& \leq \frac{\sigma_N^4}{|\mathcal{I}z_1|^2|\Im z_2|^2}\big(N\esp\big[\sum_{l}|R^{(k)}(z_1)_{li}|^2\big]+O(\delta_NN^2)\big)\\
& \leq \frac{\sigma_N^4}{|\mathcal{I}z_1|^2|\Im z_2|^2}\big(\frac{N}{|\Im z_1|^2}+O(\delta_NN^2)\big)=O(\delta_N),
\end{align*}
uniformly in $k$.
The three other terms can be treated similarly: they are of order $O(\delta_N)$, uniformly in $k$.

Using again the resolvent identity and very similar computations, the sums over $l$ and $l'$ of the other three error terms $\esp[W_{il}\overline{W_{il'}}\beta_{ill'}\overline{\gamma_{il'l}}]$, $\esp[W_{il}\overline{W_{il'}}\gamma_{ill'}\overline{\beta_{il'l}}]$ and $\esp[W_{il}\overline{W_{il'}}\gamma_{ill'}\overline{\gamma_{il'l}}]$ 
are proved to be of order $O(\delta_N)$, uniformly in $k$.
As a consequence, $\|e_k^{(N)}\|_{L^2} = O(N\delta_N^{1/2})$, uniformly in $k$ and
\[\sigma_N^4\sum_{k=1}^N\tilde{R}(z_1)_{kk}\tilde{R}(z_2)_{kk}e_k^{(N)} \underset{N\to +\infty}{\longrightarrow} 0\]
in probability.

The $L^2$ norm of $\sigma_N^4\sum_{k=1}^N\tilde{R}(z_1)_{kk}\tilde{R}(z_2)_{kk}f_k^{(N)}$ is bounded by
\[\frac{\sigma_N^4}{|\Im z_1|^2|\Im z_2|}\sum_{k=1}^N\sum_{i<k}\mathbb{E}\Big[\Big|\sum_{l<k}(1-\frac{1}{2}\delta_{il})W_{il}^2\sum_{j<k}\mathbb{E}_{\leq k-1}\big[\mathbb{E}_{\leq k-1}[R^{(kil)}(z_1)_{lj}]R^{(kil)}(z_2)_{ji}R^{(k)}(z_2)_{li}\big]\Big|^2\Big]^{1/2},\]
and then, using Jensen inequality (with respect to $\mathbb{E}_{\leq k-1}$) and Cauchy-Schwarz inequality (with respect to the $l$-sum), by
\[\frac{\sigma_N^4}{|\Im z_1|^2|\Im z_2|}\sum_{k=1}^N\sum_{i<k}\mathbb{E}\Big[\sum_{l<k}|W_{il}|^4\Big|\sum_{j<k}\mathbb{E}_{\leq k-1}[R^{(kil)}(z_1)_{lj}]R^{(kil)}(z_2)_{ji}\Big|^2\sum_{l<k}|R^{(k)}(z_2)_{li}|^2\Big]^{1/2}.\]
From Lemma \ref{technicalbound}, one gets:
\[N^2\sigma_N^4\Big(\delta_N^2s_N^2+(N-2)m_N\Big)^{1/2}|\Im z_1|^{-3}|\Im z_2|^{-3}=O(N^{-1/2}).\]
The last term is also negligible. This follows by replacing successively $(1-\frac{1}{2}\delta_{il})|W_{il}|^2$, $R^{(k)}(z_2)_{ii}$, $R^{(kil)}(z_1)_{lj}$, $R^{(kil)}(z_2)_{jl}$ by $\sigma_N^2, \tilde{R}(z_2)_{ii}, R^{(k)}(z_1)_{lj}, R^{(k)}(z_2)_{jl}$.
Indeed, this last term can be written as follows:
\begin{align*}
  & \sigma_N^4\sum_{k=1}^N \tilde{R}(z_1)_{kk}\tilde{R}(z_2)_{kk}g_k^{(N)}\\
  & = \sigma_N^4\sum_{k=1}^N\tilde{R}(z_1)_{kk}\tilde{R}(z_2)_{kk}\sum_{i,j,l<k}\tilde{R}(z_1)_{ii}\\
  & \hspace{2cm} \times \bigg\{\big((1-\frac{1}{2}\delta_{il})|W_{il}|^2-\sigma_N^2\big)\esp_{\leq k-1}[R^{(kil)}(z_1)_{lj}]\esp_{\leq k-1}[R^{(kil)}(z_2)_{jl}R^{(k)}(z_2)_{ii}]\\
 & \hspace{3cm} +\sigma_N^2\esp_{\leq k-1}[R^{(kil)}(z_1)_{lj}]\esp_{\leq k-1}[R^{(kil)}(z_2)_{jl}(R^{(k)}(z_2)_{ii}-\tilde{R}(z_2)_{ii})]\\
 & \hspace{3cm} +\sigma_N^2\esp_{\leq k-1}[R^{(kil)}(z_1)_{lj}-R^{(k)}(z_1)_{lj}]\esp_{\leq k-1}[R^{(k)}(z_2)_{jl}]\tilde{R}(z_2)_{ii}\\
 & \hspace{3cm} +\sigma_N^2\esp_{\leq k-1}[R^{(k)}(z_1)_{lj}]\esp_{\leq k-1}[R^{(kil)}(z_2)_{jl}-R^{(k)}(z_2)_{jl}]\tilde{R}(z_2)_{ii}\bigg\}\\
& := h_k^{(N)}+p_k^{(N)}+q_k^{(N)}+r_k^{(N)}.
\end{align*}

By Minkowski inequality, the $L^2$ norm of term $h_k^{(N)}$ can be bounded as follows:
\begin{align*}
\|h_k^{(N)}\|_2 
   & \leq  \frac{\sigma_N^4}{|\Im z_1|^2|\Im z_2|^2}\sum_{k=1}^N\sum_{i<k}\Big\|\esp_{\leq k-1}\Big[\sum_{l<k}\big((1-\frac{1}{2}\delta_{il})|W_{il}|^2-\sigma_N^2\big)\sum_{j<k}\esp_{\leq k-1}[R^{(kil)}(z_1)_{lj}]R^{(kil)}(z_2)_{jl}\Big]\Big\|_2\\
 & \leq  \frac{\sigma_N^4}{|\Im z_1|^2|\Im z_2|^2}\sum_{k=1}^N\sum_{i<k}\esp\Big[\Big(\sum_{l<k}|(1-\frac{1}{2}\delta_{il})|W_{il}|^2-\sigma_N^2| \Big|\sum_{j<k}\esp_{\leq k-1}[R^{(kil)}(z_1)_{lj}]R^{(kil)}(z_2)_{jl}\Big|\Big)^2\Big]^{1/2}\\
 & \leq  \frac{N^2\sigma_N^4}{|\Im z_1|^3|\Im z_2|^3}\Big(\frac{1}{4}\delta_N^2s_N^2-\sigma_N^2s_N^2+\sigma_N^4+(N-2)(m_N-\sigma_N^4)\Big)^{1/2}\\
 & =O\big(N^{-1/2}\big).
\end{align*}

The term $p_k^{(N)}$ can be bounded as follows (using Lemma \ref{technicalbound}):
\begin{align*}
|p_k^{(N)}| & \leq \frac{\sigma_N^6}{|\Im z_1|^2|\Im z_2|}\sum_{k=1}^N \esp_{\leq k-1}\Big[\sum_{i,l<k}\Big|\sum_{j<k}\esp_{\leq k-1}[R^{(kil)}(z_1)_{lj}]R^{(kil)}(z_2)_{jl}\Big||R^{(k)}(z_2)_{ii}-\tilde{R}(z_2)_{ii}|\Big]\\
   & \leq  \frac{\sigma_N^6}{|\Im z_1|^3|\Im z_2|^2}\sum_{k=1}^N\sum_{i,l<k}\esp_{\leq k-1}\Big[|R^{(k)}(z_2)_{ii}-\tilde{R}(z_2)_{ii}|\Big].
\end{align*}
Since $\esp[|R^{(k)}(z_2)_{ii}-\tilde{R}(z_2)_{ii}|]=O(N^{-1/2})$ uniformly in $i,k$, 
the $L^1$ norm of $p_k^{(N)}$ satisfies:
\[\|p_k^{(N)}\|_{L^1}=O(\sigma_N^6N^3N^{-1/2})=O(N^{-1/2}),\] 
uniformly in $k$.

Using Lemma \ref{technicalbound} and \eqref{removebound}, 
\[\Big|\sum_{j<k}\esp_{\leq k-1}[R^{(kil)}(z_1)_{lj}-R^{(k)}(z_1)_{lj}]R^{(k)}(z_2)_{jl}\Big|\leq \| \esp_{\leq k-1}[R^{(kil)}(z_1)-R^{(k)}(z_1)]\| \| R^{(k)}(z_2)\| \leq \frac{2\delta_N}{|\Im z_1|^2|\Im z_2|}.\]
Hence, the term $q_k^{(N)}$ is bounded by:
\begin{align*}
|q_k^{(N)}|&\leq \frac{\sigma_N^6}{|\Im z_1 \Im z_2|^2}\sum_{k=1}^N\sum_{i,l<k}\esp_{\leq k-1}\Big[\Big|\sum_{j<k}\esp_{\leq k-1}[R^{(kil)}(z_1)_{lj}-R^{(k)}(z_1)_{lj}]R^{(k)}(z_2)_{jl}\Big|\Big]\\
    &\leq \frac{2\delta_N\sigma_N^6N^3}{|\Im z_1|^4|\Im z_2|^3}= O(\delta_N),
\end{align*}
uniformly in $k$.
Similarly, 
\[|r_k^{(N)}|\leq \frac{2\delta_N\sigma_N^6N^3}{|\Im z_1|^3|\Im z_2|^4}=O(\delta_N),\]
uniformly in $k$.
As $\delta_N \underset{N\to +\infty}{\longrightarrow}  0$, we conclude that
\[\varepsilon_{k,6}^{(N)} \underset{N\to +\infty}{\longrightarrow} 0\]
in probability.
\end{proof}

\subsubsection{Contribution of the third term} 
The third term 
 \[ \tau_N^2\sum_{i,j<k}\mathbb{E}_{\leq k-1}[R^{(k)}(z_1)_{ij}]\mathbb{E}_{\leq k-1}[R^{(k)}(z_2)_{ij}]\]
is very similar to the second one  
 \[ \sigma_N^4\sum_{i,j<k}\mathbb{E}_{\leq k-1}[R^{(k)}(z_1)_{ij}]\mathbb{E}_{\leq k-1}[R^{(k)}(z_2)_{ji}].\]
Recall that $\tau_N^2$ and $\sigma_N^4$ are of the same order. Therefore, the only difference lies in the entry of matrix $R^{(k)}(z_2)$ appearing on the right. As a consequence, all computations which were performed on $R^{(k)}(z_1)_{ij}$ are still valid here. Thus
\begin{align*}
 \mathbb{E}_{\leq k-1} [& R^{(k)}(z_1)_{ij}]\mathbb{E}_{\leq k-1}[R^{(k)}(z_2)_{ij}]\\
& =\delta_{ij}\tilde{R}(z_1)_{ii}\mathbb{E}_{\leq k-1}[R^{(k)}(z_2)_{ij}] +\sum_{l<k}\tilde{R}(z_1)_{ii}W_{il}\mathbb{E}_{\leq k-1}[R^{(kil)}(z_1)_{lj}]\mathbb{E}_{\leq k-1}[R^{(k)}(z_2)_{ij}]\\
& + \quad \sum_{l\neq k}(1-\frac{1}{2}\delta_{il})\tilde{R}(z_1)_{ii}\mathbb{E}_{\leq k-1}[W_{il}^2R^{(kil)}(z_1)_{li}R^{(k)}(z_1)_{lj}]\mathbb{E}_{\leq k-1}[R^{(k)}(z_2)_{ij}]\\
& + \quad \mathbb{E}_{\leq k-1}[\sum_{l\neq k}(1-\frac{1}{2}\delta_{il})\tilde{R}(z_1)_{ii}(|W_{il}|^2R^{(kil)}(z_1)_{ll}-\sigma_N^2\mathbb{E}[R(z_1)_{ll}])R^{(k)}(z_1)_{ij}]\mathbb{E}_{\leq k-1}[R^{(k)}(z_2)_{ij}]\\
& - \quad \frac{\sigma_N^2}{2}\tilde{R}(z_1)_{ii}\mathbb{E}[R(z_1)_{ii}]\mathbb{E}_{\leq k-1}[R^{(k)}(z_1)_{ij}]\mathbb{E}_{\leq k-1}[R^{(k)}(z_2)_{ij}]\\
& - \quad \sigma_N^2\tilde{R}(z_1)_{ii}\mathbb{E}[R(z_1)_{kk}]\mathbb{E}_{\leq k-1}[R^{(k)}(z_1)_{ij}]\mathbb{E}_{\leq k-1}[R^{(k)}(z_2)_{ij}].
\end{align*}

Similarly to what was done previously, the last four terms will not asymptotically contribute.
\begin{lem}\label{lem_3rd_term_simplification1}
 \begin{align*}
  \sum_{i,j<k}&\esp_{\leq k-1}[R^{(k)}(z_1)_{ij}]\esp_{\leq k-1}[R^{(k)}(z_2)_{ij}]\\
  & = \sum_{i<k}\tilde{R}(z_1)_{ii}\mathbb{E}_{\leq k-1}[R^{(k)}(z_2)_{ii}] + \sum_{i,j<k}\tilde{R}(z_1)_{ii}\sum_{l<k}W_{il}\mathbb{E}_{\leq k-1}[R^{(kil)}(z_1)_{lj}]\mathbb{E}_{\leq k-1}[R^{(k)}(z_2)_{ij}] + \tilde{\varepsilon}_{k,5}^{(N)},
 \end{align*}
 with $\tau_N^2\sum_{k=1}^N\tilde{R}(z_1)_{kk}\tilde{R}(z_2)_{kk}\tilde{\varepsilon}_{k,5}^{(N)} \underset{N\to +\infty}{\longrightarrow} 0$ in probability.
\end{lem}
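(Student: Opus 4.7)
The strategy is to mimic almost verbatim the proof of Lemma \ref{lem_2nd_term_simplification1}. The expression of $\tilde{\varepsilon}_{k,5}^{(N)}$ differs from that of $\varepsilon_{k,5}^{(N)}$ only by the substitution of $\mathbb{E}_{\leq k-1}[R^{(k)}(z_2)_{ji}]$ by $\mathbb{E}_{\leq k-1}[R^{(k)}(z_2)_{ij}]$ in every summand, and the outer prefactor $\sigma_N^4$ is replaced by $\tau_N^2$. Since $|\tau_N^2|$ has the same order $O(N^{-2})$ as $\sigma_N^4$ under hypothesis~\ref{hyp:offdiagonal}, every power of $N$ in the previous estimates is preserved.

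I would therefore decompose $\tilde{\varepsilon}_{k,5}^{(N)}=\tilde{a}_k^{(N)}+\tilde{b}_k^{(N)}+\tilde{c}_k^{(N)}+\tilde{d}_k^{(N)}$ in exact analogy with $a_k^{(N)}+b_k^{(N)}+c_k^{(N)}+d_k^{(N)}$ and verify that each of the four pieces, when multiplied by $\tau_N^2\tilde{R}(z_1)_{kk}\tilde{R}(z_2)_{kk}$ and summed over $k$, converges to $0$ in probability. The deterministic pieces $\tilde{c}_k^{(N)}$ and $\tilde{d}_k^{(N)}$ are handled by the same crude $|\Im z_1|^{-1},|\Im z_2|^{-1}$ resolvent bounds as before; the stochastic ones $\tilde{a}_k^{(N)}$ and $\tilde{b}_k^{(N)}$ require $L^2$ estimates via Minkowski and Cauchy--Schwarz inequalities and the moment bounds \eqref{eq_improved_bound}, exactly as in the previous proof.

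The only point that needs explicit checking is that Lemma \ref{technicalbound} still provides the uniform bound
\[
 \Big|\sum_{j<k}R^{(k)}(z_1)_{ij}\,\mathbb{E}_{\leq k-1}[R^{(k)}(z_2)_{ij}]\Big|\leq \|R^{(k)}(z_1)\|\,\|R^{(k)}(z_2)\|\leq \frac{1}{|\Im z_1||\Im z_2|},
\]
and similarly for its $L^2$-analogue $\big(\sum_i|\sum_{j<k}R^{(k)}(z_1)_{ij}\mathbb{E}_{\leq k-1}[R^{(k)}(z_2)_{ij}]|^2\big)^{1/2}\leq\|R^{(k)}(z_1)\|\|R^{(k)}(z_2)\|$. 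This holds because $\sum_{j<k}A_{ij}B_{ij}=(APB^{\mathrm T})_{ii}$ where $P$ is the orthogonal projector onto the first $k-1$ coordinates, so Lemma \ref{technicalbound} applies after the cosmetic replacement $B\mapsto B^{\mathrm T}$, which does not affect the operator norm.

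The main (but minor) obstacle is bookkeeping: in the $L^2$ estimate of $\tilde{a}_k^{(N)}$, where we previously applied Cauchy--Schwarz over the $l$-sum and then used Lemma \ref{technicalbound} to control $\sum_{l\neq k}|\sum_{j<k}R^{(k)}(z_1)_{lj}\mathbb{E}_{\leq k-1}[R^{(k)}(z_2)_{ji}]|^2$, we now need the analogous control of $\sum_{l\neq k}|\sum_{j<k}R^{(k)}(z_1)_{lj}\mathbb{E}_{\leq k-1}[R^{(k)}(z_2)_{lj}]|^2$. But this is again of the form $\sum_l |(R^{(k)}(z_1)PM^{\mathrm T})_{ll}|^2$ with $M=\mathbb{E}_{\leq k-1}[R^{(k)}(z_2)]$, hence bounded by $\|R^{(k)}(z_1)\|^2\|R^{(k)}(z_2)\|^2\cdot(N-1)$ in $\ell^2$, which is exactly the order used previously. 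With this identification, each step of the proof of Lemma \ref{lem_2nd_term_simplification1} transfers with no other change, yielding $\tau_N^2\sum_{k=1}^N\tilde{R}(z_1)_{kk}\tilde{R}(z_2)_{kk}\tilde{\varepsilon}_{k,5}^{(N)}\longrightarrow 0$ in probability, as required.
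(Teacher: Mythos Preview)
Your approach is correct and coincides with the paper's: the authors simply note that $R^{(k)}(z_2)_{ij}={}^tR^{(k)}(z_2)_{ji}$ and declare the proof identical to that of Lemma~\ref{lem_2nd_term_simplification1}, which is exactly your transpose observation $B\mapsto B^{\mathrm T}$ in Lemma~\ref{technicalbound}.

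There is, however, one indexing slip in your treatment of $\tilde a_k^{(N)}$ that would break the estimate as written. After the substitution $R^{(k)}(z_2)_{ji}\to R^{(k)}(z_2)_{ij}$, the quantity to control is
\[
\sum_{l\neq k}\Big|\sum_{j<k}R^{(k)}(z_1)_{lj}\,\mathbb{E}_{\leq k-1}[R^{(k)}(z_2)_{ij}]\Big|^2
=\sum_{l\neq k}\big|(R^{(k)}(z_1)PM^{\mathrm T})_{li}\big|^2,
\]
with the second index $i$ (fixed), not $l$. This is a column $\ell^2$-norm and is bounded by $\|R^{(k)}(z_1)\|^2\|M\|^2\le|\Im z_1|^{-2}|\Im z_2|^{-2}$ via Lemma~\ref{technicalbound}, exactly as in the second-term case. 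Your version with $(\cdot)_{ll}$ and the extra factor $(N-1)$ is \emph{not} ``the order used previously''; plugging that weaker bound into the chain of inequalities would give $O(1)$ instead of $O(N^{-1/2})$ for $\|\tau_N^2\sum_k\tilde R(z_1)_{kk}\tilde R(z_2)_{kk}\tilde a_k^{(N)}\|_{L^2}$. With the index corrected, every step does transfer verbatim and the conclusion follows.
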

The proof of this Lemma is exactly the same as the one of Lemma \ref{lem_2nd_term_simplification1}, by considering that $R^{(k)}(z_2)_{ij}={}^tR^{(k)}(z_2)_{ji}$. We use again the resolvent identity (Lemma \ref{lem_resolvent_identity}) to remove the dependence between $W_{il}$ and $R^{(k)}(z_2)$. Note however that, compared to what was done in Section \ref{sec_2nd_term}, we reverse the roles played by $R^{(kil)}(z_2)$ and $R^{(k)}(z_2)$ to get the following.
\begin{align*}
 \sum_{i,j<k}\tilde{R}(z_1)_{ii} \sum_{l<k} & W_{il}\mathbb{E}_{\leq k-1}[R^{(kil)}(z_1)_{lj}]\mathbb{E}_{\leq k-1}[R^{(k)}(z_2)_{ij}]\\
 & = \sum_{i,j<k}\tilde{R}(z_1)_{ii}\sum_{l<k}W_{il}\mathbb{E}_{\leq k-1}[R^{(kil)}(z_1)_{lj}]\mathbb{E}_{\leq k-1}[R^{(kil)}(z_2)_{ij}]\\
 & + \sum_{i,j<k}\tilde{R}(z_1)_{ii}\sum_{l<k}(1-\frac{1}{2}\delta_{il})W_{il}^2\mathbb{E}_{\leq k-1}[R^{(kil)}(z_1)_{lj}]\mathbb{E}_{\leq k-1}[R^{(k)}(z_2)_{ii}R^{(kil)}(z_2)_{lj}]\\
 & + \sum_{i,j<k}\tilde{R}(z_1)_{ii}\sum_{l<k}(1-\frac{1}{2}\delta_{il})|W_{il}|^2\mathbb{E}_{\leq k-1}[R^{(kil)}(z_1)_{lj}]\mathbb{E}_{\leq k-1}[R^{(k)}(z_2)_{il}R^{(kil)}(z_2)_{ij}].
\end{align*}
Heuristically, consider that $W_{il}^2$ is close to its expectation $\tau_N$, $R^{(k)}(z_2)_{ii}\approx \tilde{R}(z_2)_{ii}$ and replace $R^{(kil)}(z)$ by $R^{(k)}(z)$. Then the second term of the right-hand side becomes 
\[\tau_N\Big(\sum_{i<k}\tilde{R}(z_1)_{ii}\tilde{R}(z_2)_{ii}\Big)\Big(\sum_{l,j<k}\mathbb{E}_{\leq k-1}[R^{(k)}(z_1)_{lj}]\mathbb{E}_{\leq k-1}[R^{(k)}(z_2)_{lj}]\Big).\]

Subtract this quantity to both sides of equation in Lemma \ref{lem_3rd_term_simplification1}. This yields the following Lemma, whose proof is postponed to the end of the paragraph.
\begin{lem}\label{lem_3rd_term_simplification2}
\begin{align*}
 (1-\tau_N\sum_{i<k}\tilde{R}(z_1)_{ii}\tilde{R}(z_2)_{ii})\sum_{i,j<k}\mathbb{E}_{\leq k-1}[R^{(k)}(z_1)_{ij}] & \mathbb{E}_{\leq k-1}[R^{(k)}(z_2)_{ij}]\\
 & = \sum_{i<k}\tilde{R}(z_1)_{ii}\mathbb{E}_{\leq k-1}[R^{(k)}(z_2)_{ii}]+\tilde{\varepsilon}_{k,6}^{(N)},
\end{align*}
with $\tau_N^2\sum_{k=1}^N\tilde{R}(z_1)_{kk}\tilde{R}(z_2)_{kk}\tilde{\varepsilon}_{k,6}^{(N)} \underset{N\to +\infty}{\longrightarrow} 0$ in probability.
\end{lem}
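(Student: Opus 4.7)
My plan is to mirror the proof of Lemma \ref{lem_2nd_term_simplification2} almost line by line, substituting the appropriate transposed resolvent expansions and replacing $\sigma_N^2$ by $\tau_N$ where the latter arises as the expectation of $W_{il}^2$. Starting from the identity provided by Lemma \ref{lem_3rd_term_simplification1}, I would apply the resolvent identity \eqref{remove} to the factor $R^{(k)}(z_2)_{ij}$ (instead of $R^{(k)}(z_2)_{ji}$ as in Section \ref{sec_2nd_term}) in order to remove its dependence on $W_{il}$. As displayed in the paragraph preceding the statement, this produces three contributions: a linear-in-$W_{il}$ term involving only $R^{(kil)}$-resolvents, a $W_{il}^2$-contribution carrying the factor $R^{(k)}(z_2)_{ii}R^{(kil)}(z_2)_{lj}$, and a $|W_{il}|^2$-contribution carrying the factor $R^{(k)}(z_2)_{il}R^{(kil)}(z_2)_{ij}$.

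Next, I would isolate the ``main'' piece of the $W_{il}^2$-contribution, obtained by replacing $W_{il}^2$ by its expectation $\tau_N$, $R^{(k)}(z_2)_{ii}$ by $\tilde{R}(z_2)_{ii}$, and $R^{(kil)}(z_2)_{lj}$ by $R^{(k)}(z_2)_{lj}$. This piece coincides precisely with
\[\tau_N\Bigl(\sum_{i<k}\tilde{R}(z_1)_{ii}\tilde{R}(z_2)_{ii}\Bigr)\sum_{l,j<k}\mathbb{E}_{\leq k-1}[R^{(k)}(z_1)_{lj}]\mathbb{E}_{\leq k-1}[R^{(k)}(z_2)_{lj}],\]
so subtracting it from both sides of the identity of Lemma \ref{lem_3rd_term_simplification1} produces on the left the factor $(1-\tau_N\sum_{i<k}\tilde{R}(z_1)_{ii}\tilde{R}(z_2)_{ii})$ multiplying the original double sum, while the remaining terms regroup as $\tilde{\varepsilon}_{k,6}^{(N)} = \tilde{\varepsilon}_{k,5}^{(N)}+\tilde{e}_k^{(N)}+\tilde{f}_k^{(N)}+\tilde{g}_k^{(N)}$, with $\tilde{e}_k^{(N)},\tilde{f}_k^{(N)},\tilde{g}_k^{(N)}$ the exact analogues of the quantities $e_k^{(N)},f_k^{(N)},g_k^{(N)}$ from the proof of Lemma \ref{lem_2nd_term_simplification2}.

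The convergence to zero in probability of $\tau_N^2\sum_k\tilde{R}(z_1)_{kk}\tilde{R}(z_2)_{kk}\tilde{\varepsilon}_{k,5}^{(N)}$ is provided by Lemma \ref{lem_3rd_term_simplification1}. For the three remaining pieces, since $|\tau_N|=O(N^{-1})$ has the same order as $\sigma_N^2$ and $\mathbb{E}[|W_{il}|^4]=m_N=O(N^{-2})$ matches the fourth-moment bound used in Lemma \ref{lem_2nd_term_simplification2}, all the Cauchy--Schwarz estimates, the applications of Lemma \ref{technicalbound}, and the controls based on \eqref{removebound} and on the concentration bound $\mathbb{E}[|R^{(k)}(z)_{ii}-\tilde{R}(z)_{ii}|^2]=O(N^{-1})$ transfer verbatim. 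Each error piece will thus contribute $o(1)$ in probability to $\tau_N^2\sum_k\tilde{R}(z_1)_{kk}\tilde{R}(z_2)_{kk}(\cdot)$.

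The only real obstacle is notational rather than conceptual: since we now expand $R^{(k)}(z_2)_{ij}$ instead of $R^{(k)}(z_2)_{ji}$, the free index $l$ appears in the second coordinate, which permutes which diagonal and off-diagonal entries of $R^{(kil)}(z_2)$ factor out in each contribution. Once this bookkeeping is carefully laid out, the technical estimates are identical to those of the previous subsection, with $\tau_N$ in place of $\sigma_N^2$ in the main contribution.
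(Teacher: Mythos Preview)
Your proposal is correct and follows essentially the same approach as the paper. In particular, you correctly identify the reversal of the roles of $R^{(kil)}(z_2)$ and $R^{(k)}(z_2)$ in the resolvent expansion of $R^{(k)}(z_2)_{ij}$, the resulting decomposition $\tilde{\varepsilon}_{k,6}^{(N)}=\tilde{\varepsilon}_{k,5}^{(N)}+\tilde{e}_k^{(N)}+\tilde{f}_k^{(N)}+\tilde{g}_k^{(N)}$, and the fact that the estimates from Lemma~\ref{lem_2nd_term_simplification2} carry over once $\sigma_N^2$ is replaced by $\tau_N$ in the main contribution (using that $|\tau_N|\le\sigma_N^2$ and $\mathbb{E}[|W_{il}^2-\tau_N|^2]\le m_N$).
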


We are ready now to derive the contribution of the whole third term.
\begin{prop}
\begin{align*}
& \tau_N^2\sum_{k=1}^N\tilde{R}(z_1)_{kk}\tilde{R}(z_2)_{kk}\sum_{i,j<k}\mathbb{E}_{\leq k-1}[R^{(k)}(z_1)_{ij}]\mathbb{E}_{\leq k-1}[R^{(k)}(z_2)_{ij}]\\
 & \underset{N\to +\infty}{\longrightarrow} -\log\Big(1-\tau\int_{\mathbb{R}}\frac{\nu_{\infty}(dx)}{(\omega(z_1)-x)(\omega(z_2)-x)}\Big)-\tau\int_{\mathbb{R}}\frac{\nu_{\infty}(dx)}{(\omega(z_1)-x)(\omega(z_2)-x)}.
\end{align*}
\end{prop}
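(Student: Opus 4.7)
The proof will closely mirror the argument given for Proposition \ref{prop_2nd_term}, with $\sigma_N^2$ replaced by $\tau_N$ at the appropriate places and with Lemma \ref{lem_3rd_term_simplification2} playing the role of Lemma \ref{lem_2nd_term_simplification2}. The first step is to introduce the auxiliary function
\[ g_{j,k,N}(t):=\frac{\tau_N\tilde{R}(z_1)_{jj}\tilde{R}(z_2)_{jj}}{1-\tau_N\sum_{i<k}\tilde{R}(z_1)_{ii}\tilde{R}(z_2)_{ii}-t\tau_N\tilde{R}(z_1)_{kk}\tilde{R}(z_2)_{kk}}, \]
which, since $|\tau_N|\leq \sigma_N^2$ by Cauchy–Schwarz, is well-defined and bounded on $[0;1]$, uniformly in $j,k$, by $C|\tau_N|$ for $N$ large enough, thanks to Lemma \ref{lem_well_defined}.

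Using Lemma \ref{lem_3rd_term_simplification2} and the fact that $\mathbb{E}_{\leq k-1}[R^{(k)}(z_2)_{ii}]-\tilde{R}(z_2)_{ii}\to 0$ in $L^2$ uniformly in $i,k$, I would deduce, as in the previous subsection, that
\[ \tau_N\sum_{i,j<k}\mathbb{E}_{\leq k-1}[R^{(k)}(z_1)_{ij}]\mathbb{E}_{\leq k-1}[R^{(k)}(z_2)_{ij}] = \sum_{i<k}g_{i,k,N}(0)+\tilde{\varepsilon}_{k,7}^{(N)}, \]
with $\tau_N\sum_{k=1}^N\tilde{R}(z_1)_{kk}\tilde{R}(z_2)_{kk}\tilde{\varepsilon}_{k,7}^{(N)}\to 0$ in probability. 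Multiplying by $\tau_N\tilde{R}(z_1)_{kk}\tilde{R}(z_2)_{kk}$ and summing over $k$, I would then write
\begin{align*}
\tau_N^2\sum_{k=1}^N\tilde{R}(z_1)_{kk}\tilde{R}(z_2)_{kk}&\sum_{i,j<k}\mathbb{E}_{\leq k-1}[R^{(k)}(z_1)_{ij}]\mathbb{E}_{\leq k-1}[R^{(k)}(z_2)_{ij}]\\
&=\sum_{k=1}^Ng_{k,k,N}(0)-\tau_N\sum_{k=1}^N\tilde{R}(z_1)_{kk}\tilde{R}(z_2)_{kk}+\tilde{E}_N,
\end{align*}
with $\tilde{E}_N\to 0$ in probability.

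Next, from the identity $g_{k,k,N}(0)-g_{k,k,N}(t)=t\,g_{k,k,N}(0)g_{k,k,N}(t)$, which is uniformly bounded by $C^2\tau_N^2$, I integrate over $t\in[0;1]$ and sum on $k$ to obtain
\[ \Big|\sum_{k=1}^Ng_{k,k,N}(0)+\log\Big(1-\tau_N\sum_{i=1}^N\tilde{R}(z_1)_{ii}\tilde{R}(z_2)_{ii}\Big)\Big|\leq C^2N\tau_N^2, \]
which tends to $0$ since $N\tau_N^2=O(N^{-1})$. Passing to the limit using $N\tau_N\to \tau$ and
\[ \frac{1}{N}\sum_{k=1}^N\tilde{R}(z_1)_{kk}\tilde{R}(z_2)_{kk}\longrightarrow \int_{\mathbb{R}}\frac{\nu_{\infty}(dx)}{(\omega(z_1)-x)(\omega(z_2)-x)} \]
yields the announced limit.

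The main obstacle is the proof of Lemma \ref{lem_3rd_term_simplification2}, which is the direct counterpart of Lemma \ref{lem_2nd_term_simplification2}. The computations are essentially the same as in Section \ref{sec_2nd_term}, upon replacing $R^{(k)}(z_2)_{ji}$ by $R^{(k)}(z_2)_{ij}={}^tR^{(k)}(z_2)_{ji}$ and keeping track of the fact that the new cross term now produces $W_{il}^2$ (whose expectation is $\tau_N$) rather than $|W_{il}|^2$. The error terms $\tilde{\varepsilon}_{k,5}^{(N)}, \tilde{\varepsilon}_{k,6}^{(N)}$ are controlled by exactly the same resolvent-identity arguments and bounds on $\|R^{(kil)}-R^{(k)}\|\leq 2\delta_N|\Im z|^{-2}$ and on moments of the quadratic forms; in particular, all $L^2$ estimates carry over verbatim since $\tau_N^2$ and $\sigma_N^4$ are of the same order.
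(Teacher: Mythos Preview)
Your proposal is correct and follows exactly the paper's approach: the paper's proof consists of the single sentence that the result is obtained from the proof of Proposition \ref{prop_2nd_term} by changing $\sigma_N^2$ into $\tau_N$ and $\sigma^2$ into $\tau$, which is precisely what you do (with the helpful additional remark that $|\tau_N|\leq \sigma_N^2$ justifies the use of Lemma \ref{lem_well_defined}). Your discussion of Lemma \ref{lem_3rd_term_simplification2} also matches the paper's separate treatment of that lemma.
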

The proof is obtained from the proof of Proposition \ref{prop_2nd_term} by changing $\sigma_N^2$ into $\tau_N$ and $\sigma^2$ into $\tau$ (recall that $\tau_N$ and $\tau$ are supposed to be real numbers). It remains to prove Lemma \ref{lem_3rd_term_simplification2}.
\begin{proof}[Proof of Lemma \ref{lem_3rd_term_simplification2}]
 Remark that
 \begin{align*}
  \tilde{\varepsilon}_{k,6}^{(N)} & = \tilde{\varepsilon}_{k,5}^{(N)} + \sum_{i,j<k}\tilde{R}(z_1)_{ii}\sum_{l<k}W_{il}\esp_{\leq k-1}[R^{(kil)}(z_1)_{lj}]\esp_{\leq k-1}[R^{(kil)}(z_2)_{ij}]\\
  & + \sum_{i,j<k}\tilde{R}(z_1)_{ii}\sum_{l<k}(1-\frac{1}{2}\delta_{il})|W_{il}|^2\esp_{\leq k-1}[R^{(kil)}(z_1)_{lj}]\esp_{\leq k-1}[R^{(k)}(z_2)_{il}R^{(kil)}(z_2)_{ij}]\\
  & + \sum_{i,j<k}\tilde{R}(z_1)_{ii}\sum_{l<k}\bigg((1-\frac{1}{2}\delta_{il})W_{il}^2\esp_{\leq k-1}[R^{(kil)}(z_1)_{lj}]\esp_{\leq k-1}[R^{(k)}(z_2)_{ii}R^{(kil)}(z_2)_{lj}]\\
  & \hspace{5cm} - \tau_N\esp_{\leq k-1}[R^{(k)}(z_1)_{lj}]\esp_{\leq k-1}[R^{(k)}(z_2)_{lj}]\tilde{R}(z_2)_{ii}\bigg)\\
  & := \tilde{\varepsilon}_{k,5}^{(N)} +\tilde{e}_k^{(N)}+\tilde{f}_k^{(N)}+\tilde{g}_k^{(N)}.
 \end{align*}
We proceed as in the proof of Lemma \ref{lem_2nd_term_simplification2}.

We focus on $\tilde{e}_k^{(N)}$. Its $L^2$ norm is bounded by:
\[\frac{1}{|\mathcal{I}z_1|}\sum_{i<k}\|\sum_{l<k}W_{il}\sum_{j<k}\esp_{\leq k-1}[R^{(kil)}(z_1)_{lj}]\esp_{\leq k-1}[R^{(kil)}(z_2)_{ij}]\|_{L^2} .\]
Similarly to what was done for $e_k^{(N)}$ in lemma \ref{lem_2nd_term_simplification2}, we develop the square of the $l$-sum. The sum of squares is bounded by $O(N\delta_N)$.

In order to control the sum of the cross terms $\esp[W_{il}\overline{W_{il'}}\tilde{\alpha}_{il}\overline{\tilde{\alpha}_{il'}}]$, where \[\tilde{\alpha}_{il}=\sum_{j<k}\esp_{\leq k-1}[R^{(kil)}(z_1)_{lj}]\esp_{\leq k-1}[R^{(kil)}(z_2)_{ij}],\] 
we replace $R^{(kil')}$ and $R^{(kil)}$ by $R^{(kil'il)}$:

\begin{align*}
 \tilde{\alpha}_{il} & = \sum_{j<k}\esp_{\leq k-1}[R^{(kilil')}(z_1)_{lj}]\esp_{\leq k-1}[R^{(kilil')}(z_2)_{ij}]\\
 & + \sum_{j<k}\esp_{\leq k-1}\big[R^{(kil)}(z_1)_{lj}-R^{(kilil')}(z_1)_{lj}\big]\esp_{\leq k-1}[R^{(kil)}(z_2)_{ij}]\\
 & + \sum_{j<k}\esp_{\leq k-1}[R^{(kilil')}(z_1)_{lj}]\esp_{\leq k-1}\big[R^{(kil)}(z_2)_{ij}-R^{(kilil')}(z_2)_{ij}\big]\\
 & := \tilde{\alpha}_{ill'}+\tilde{\beta}_{ill'}+\tilde{\gamma}_{ill'}. 
\end{align*}

Note that, due to independence properties, the sum vanishes when $\tilde{\alpha}_{il}$ is replaced by $\tilde{\alpha}_{ill'}$ or when $\tilde{\alpha}_{il'}$ is replaced by $\tilde{\alpha}_{ill'}$. 

Using again the resolvent identity and very similar computations, the sums over $l$ and $l'$ of the four error terms $\esp[W_{il}\overline{W_{il'}}\tilde{\beta}_{ill'}\overline{\tilde{\beta}_{il'l}}]$, $\esp[W_{il}\overline{W_{il'}}\tilde{\beta}_{ill'}\overline{\tilde{\gamma}_{il'l}}]$, $\esp[W_{il}\overline{W_{il'}}\tilde{\gamma}_{ill'}\overline{\tilde{\beta}_{il'l}}]$ and $\esp[W_{il}\overline{W_{il'}}\tilde{\gamma}_{ill'}\overline{\tilde{\gamma}_{il'l}}]$ 
are proved to be of order $O(\delta_N)$, uniformly in $k$.
As a consequence, $\|\tilde{e}_k^{(N)}\|_{L^2} = O(N\delta_N^{1/2})$, uniformly in $k$ and
\[\tau_N^2\sum_{k=1}^N\tilde{R}(z_1)_{kk}\tilde{R}(z_2)_{kk}\tilde{e}_k^{(N)} \underset{N\to +\infty}{\longrightarrow} 0\]
in probability.

As in the proof of Lemma \ref{lem_2nd_term_simplification2}, the $L^2$ norm of $\tau_N^2\sum_{k=1}^N\tilde{R}(z_1)_{kk}\tilde{R}(z_2)_{kk}\tilde{f}_k^{(N)}$ is $O(N^{-1/2})$.

The last term is also negligible. This follows by replacing successively $(1-\frac{1}{2}\delta_{il})W_{il}^2$, $R^{(k)}(z_2)_{ii}$, $R^{(kil)}(z_1)_{lj}$, $R^{(kil)}(z_2)_{lj}$ by $\tau_N$, $\tilde{R}(z_2)_{ii}$, $R^{(k)}(z_1)_{lj}$ and $R^{(k)}(z_2)_{lj}$.
Indeed, this last term can be written as follows:
\begin{align*}
& \tau_N^2\sum_{k=1}^N\tilde{R}(z_1)_{kk}\tilde{R}(z_2)_{kk}\tilde{g}_k^{(N)}\\
& = \tau_N^2\sum_{k=1}^N\tilde{R}(z_1)_{kk}\tilde{R}(z_2)_{kk}\sum_{i,j,l<k}\tilde{R}(z_1)_{ii}\\
& \hspace{3cm} \times \bigg\{\big[\big((1-\frac{1}{2}\delta_{il}\big)W_{il}^2-\tau_N)\esp_{\leq k-1}[R^{(kil)}(z_1)_{lj}]\esp_{\leq k-1}\big[R^{(k)}(z_2)_{ii}R^{(kil)}(z_2)_{lj}\big]\\
& \hspace{4cm} +\tau_N\esp_{\leq k-1}[R^{(kil)}(z_1)_{lj}]\esp_{\leq k-1}\big[(R^{(k)}(z_2)_{ii}-\tilde{R}(z_2)_{ii})R^{(kil)}(z_2)_{lj}\big]\\
& \hspace{4cm} +\tau_N\esp_{\leq k-1}[R^{(kil)}(z_1)_{lj}-R^{(k)}(z_1)_{lj}]\tilde{R}(z_2)_{ii}\esp_{\leq k-1}[R^{(kil)}(z_2)_{lj}]\bigg\}\\
& \hspace{4cm} +\tau_N\esp_{\leq k-1}[R^{(k)}(z_1)_{lj}]\tilde{R}(z_2)_{ii}\esp_{\leq k-1}[R^{(kil)}(z_2)_{lj}-R^{(k)}(z_2)_{lj}]\bigg\}\\
& := \tilde{h}_k^{(N)}+ \tilde{p}_k^{(N)} + \tilde{q}_k^{(N)} +\tilde{r}_k^{(N)}.
\end{align*}
These four terms are proved to be negligible, as in Lemma \ref{lem_2nd_term_simplification2}. 

As a consequence, $\tau_N^2\sum_{k=1}^N\tilde{R}(z_1)_{kk}\tilde{R}(z_2)_{kk}\tilde{\varepsilon}_{k,6}^{(N)} \underset{N\to +\infty}{\longrightarrow} 0$ in probability.
\end{proof}

\subsubsection{Contribution of the fourth term} 
To handle the fourth term, we use that, because of Jensen inequality, 
\[\esp[|\mathbb{E}_{\leq k-1}[R^{(k)}(z)_{ii}]-\tilde{R}(z)_{ii}|^2]=o(1),\]
uniformly in $i,k$, so that 
\begin{align*}
\kappa_N\sum_{1\leq i<k\leq N}\tilde{R}(z_1)_{kk}& \tilde{R}(z_2)_{kk} \mathbb{E}_{\leq k-1}[R^{(k)}(z_1)_{ii}]\mathbb{E}_{\leq k-1}[R^{(k)}(z_2)_{ii}]\\
& =\kappa_N\sum_{1\leq i<k\leq N}\tilde{R}(z_1)_{kk}\tilde{R}(z_2)_{kk}\tilde{R}(z_1)_{ii}\tilde{R}(z_2)_{ii}+o(1)\\
& =\frac{\kappa_N}{2}\Big(\sum_{1\leq i\leq N}\tilde{R}(z_1)_{ii}\tilde{R}(z_2)_{ii}\Big)^2+o(1)\\
& \underset{N\to +\infty}{\longrightarrow} \frac{\kappa}{2}\Big(\int_{\mathbb{R}}\frac{\nu_{\infty}(dx)}{(\omega(z_1)-x)(\omega(z_2)-x)}\Big)^2.
\end{align*}

\subsubsection{Conclusion}\label{sec:CV_hook_ccl}
From previous computations, we know that 
\begin{equation*}
\gamma_N(z_1,z_2)=\sum_{k=1}^N\mathbb{E}_{\leq k-1}\Big[\mathbb{E}_{\leq k}\big[\phi_k^{(N)}(z_1)\big]\mathbb{E}_{\leq k}\big[\phi_k^{(N)}(z_2)\big]\Big] \underset{N\to +\infty}{\longrightarrow} \gamma(z_1,z_2)
\end{equation*}
in probability, where
\begin{align*}
\gamma(z_1,z_2) & := (s^2-\sigma^2-\tau)\int_{\mathbb{R}}\frac{\nu_{\infty}(dx)}{(\omega(z_1)-x)(\omega(z_2)-x)}+\frac{\kappa}{2}\Big(\int_{\mathbb{R}}\frac{\nu_{\infty}(dx)}{(\omega(z_1)-x)(\omega(z_2)-x)}\Big)^2\\
& -\log\Big(1-\sigma^2\int_{\mathbb{R}}\frac{\nu_{\infty}(dx)}{(\omega(z_1)-x)(\omega(z_2)-x)}\Big)-\log\Big(1-\tau\int_{\mathbb{R}}\frac{\nu_{\infty}(dx)}{(\omega(z_1)-x)(\omega(z_2)-x)}\Big).
\end{align*}
Recall that a sequence $(X_N)_{N\geq 1}$ of random variables converges in probability to a random variable $X$ 
if and only if, from any subsequence extracted from $(X_N)_{N\geq 1}$, 
one can further extract a subsubsequence almost surely converging to $X$. 
We will use this criterion twice in the following argument. 
First, we deduce by diagonal extraction from the convergence in probability above 
that, given a countable subset of $(\C\setminus\R)^2$, 
one can extract from any subsequence of $\gamma_N$ a subsubsequence almost surely
converging to $\gamma$ pointwise on this subset. 
Second, we will use it to reduce the proof of convergence in probability of the sequence  $(\Gamma_N(z_1,z_2))_{N\geq 1}$ 
to $\Gamma(z_1,z_2)$ for a fixed $(z_1,z_2) \in (\C\setminus\R)^2$ to the proof of the almost sure convergence of some subsequence, which is achieved by a normal family argument and analytic continuation principle.

\section{Fluctuations of linear spectral statistics of deformed Wigner matrices}\label{extension}

In the preceding section, a CLT has been established for $\mathcal{N}_N[\varphi]$, 
when $\varphi \in \mathcal{L}_1$. Recall that $\mathcal{L}_1$ is dense in $\mathcal{H}_s$ for any $s>0$. 
For $\varphi \in \mathcal{H}_s$, following \cite{Shcherbina11}, 
$V_N[\varphi]:=\Var[\mathcal{N}_N(\varphi)]$ satisfies:
\begin{align*}
V_N[\varphi] &\leq C(s)\|\varphi \|_{\mathcal{H}_s}^2\int_0^{+\infty}y^{2s-1}e^{-y}\int_{\mathbb{R}}\Var[\Tr(R_N(x+iy)]dxdy\\
		&\leq C(s)\|\varphi \|_{\mathcal{H}_s}^2\int_0^{+\infty}y^{2s-1}e^{-y}\int_{\mathbb{R}}2\bigg(s_N^2y^{-3+\delta}+2y^{-3-\delta}\frac{m_N(N\sigma_N^2)^{\delta}}{\sigma_N^2}\bigg)\sum_{k=1}^N\mathbb{E}[|R_N(x+iy)_{kk}|^{1+\delta}]dxdy\\
		&\leq C\|\varphi \|_{\mathcal{H}_s}^2,
\end{align*}
where $C < +\infty$ as soon as $\int_0^{+\infty}(y^{2s-4}+y^{2s-4-2\delta})e^{-y}dy < +\infty$. This holds when $s>3/2+\delta$. Therefore, a CLT holds for $\mathcal{N}_N[\varphi]$ when $\varphi $ belongs to the set $\mathcal{L}$ of real-valued functions in $\mathcal{H}_{s},\, s>3/2$, as a consequence of the following extension Lemma due to Shcherbina:
\begin{thm}\cite{Shcherbina11}
Let $(\mathcal{L},\| \, \|)$ be a normed vector space. Assume that:
\begin{itemize}
\item there exists $C>0$ such that for any $\varphi \in \mathcal{L}$ and large enough $N\geq 1$, one has 
\[ V_N[\varphi] \leqslant C\|\varphi\|^2,\]
\item there exists a dense linear subspace $\mathcal{L}_1 \subset \mathcal{L}$ such that a CLT is valid for $\mathcal{N}_N(\varphi)$ for all $\varphi \in \mathcal{L}_1$: there exists a continuous quadratic function $V: \mathcal{L}_1 \to \R$ such that, for all $\varphi \in \mathcal{L}_1$,
\[ \mathcal{N}_N(\varphi)-\esp[\mathcal{N}_N(\varphi)] \Rightarrow \mathcal{N}(0,V[\varphi]).\]
\end{itemize}
Then $V$ admits a unique continuous extension to $\mathcal{L}$ and 
\[ \mathcal{N}_N(\varphi)-\esp[\mathcal{N}_N(\varphi)] \Rightarrow \mathcal{N}(0,V[\varphi])\]
holds for all $\varphi \in \mathcal{L}$.
\end{thm}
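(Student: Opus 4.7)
The plan is to mirror the strategy of Lemma \ref{lem:extension_bias}, replacing the linear functional $b$ by the quadratic form $V$ and the convergence of means by convergence in distribution. The argument has two ingredients: a continuous extension of $V$ to all of $\mathcal{L}$, and a three-$\varepsilon$ approximation at the level of characteristic functions.

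First I would extend $V$. For $\varphi \in \mathcal{L}_1$, the assumed CLT together with Fatou's lemma for weak convergence applied to $(\mathcal{N}_N(\varphi)-\esp[\mathcal{N}_N(\varphi)])^2$ gives
\[ V[\varphi]\leq \liminf_{N\to+\infty}V_N[\varphi]\leq C\|\varphi\|^2. \]
As a limit of variances, $V$ is a positive quadratic form on $\mathcal{L}_1$, so its polarized bilinear form $B(\varphi,\psi):=\tfrac{1}{2}(V[\varphi+\psi]-V[\varphi]-V[\psi])$ is symmetric positive semi-definite and, by Cauchy-Schwarz, satisfies $|B(\varphi,\psi)|\leq \sqrt{V[\varphi]V[\psi]}\leq C\|\varphi\|\|\psi\|$. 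Thus $B$ extends uniquely to a continuous bilinear form on $\mathcal{L}\times\mathcal{L}$, and setting $V[\varphi]:=B(\varphi,\varphi)$ yields the required unique continuous extension to $\mathcal{L}$, still satisfying $V[\varphi]\leq C\|\varphi\|^2$.

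Next I would transport the CLT. Fix $\varphi\in\mathcal{L}$ and $\varepsilon>0$, and by density choose $\varphi_1\in\mathcal{L}_1$ with $\|\varphi-\varphi_1\|\leq\varepsilon$. Write $X_N:=\mathcal{N}_N(\varphi)-\esp[\mathcal{N}_N(\varphi)]$ and $Y_N:=\mathcal{N}_N(\varphi_1)-\esp[\mathcal{N}_N(\varphi_1)]$. By linearity of $\mathcal{N}_N$ and the hypothesis on $V_N$, for large $N$,
\[ \esp[|X_N-Y_N|^2]=V_N[\varphi-\varphi_1]\leq C\varepsilon^2. \]
Hence, for every $t\in\R$, $|\esp[e^{itX_N}]-\esp[e^{itY_N}]|\leq |t|\,\esp[|X_N-Y_N|]\leq |t|\sqrt{C}\,\varepsilon$. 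By the CLT on $\mathcal{L}_1$, $\esp[e^{itY_N}]\to e^{-t^2V[\varphi_1]/2}$ as $N\to+\infty$, while $V[\varphi_1]\to V[\varphi]$ as $\varepsilon\to 0$ by continuity of the extended $V$. Letting first $N\to+\infty$, then $\varepsilon\to 0$, yields $\esp[e^{itX_N}]\to e^{-t^2V[\varphi]/2}$ for every $t\in\R$, so L\'evy's continuity theorem gives $X_N\Rightarrow\mathcal{N}(0,V[\varphi])$.

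I do not anticipate any serious obstacle: the only delicate point is the extension step, since $V$ is quadratic rather than linear, but the positivity inherited from its variance origin makes the polarization bound immediate via Cauchy-Schwarz. Everything else is a textbook three-$\varepsilon$ approximation in the weak topology, entirely parallel to Lemma \ref{lem:extension_bias}.
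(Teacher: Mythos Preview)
Your proof is correct. The paper does not actually prove this theorem: it is stated with attribution to \cite{Shcherbina11} and invoked as a black box, so there is no in-paper proof to compare against. Your argument is the standard one and is precisely the quadratic-form analogue of the paper's own Lemma~\ref{lem:extension_bias}: you obtain the uniform bound $V[\varphi]\leq C\|\varphi\|^2$ on $\mathcal{L}_1$ via lower semicontinuity of second moments under weak convergence, use positivity and polarization to extend $V$ continuously, and then run a three-$\varepsilon$ approximation at the level of characteristic functions rather than means. Each step is justified; in particular, the Cauchy--Schwarz use of positivity is the right substitute for the linearity that made Lemma~\ref{lem:extension_bias} immediate.
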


\appendix

\section{Truncation and centering}\label{Truncation}
In Sections \ref{subsectionconvb}, \ref{subsectionreduction} and \ref{subsectionhook}, the convergence of bias and the fluctuations of linear spectral statistics for test functions in $\mathcal{L}_1$ were studied under the hypothesis that the entries of $W_N$ are bounded by a sequence $\delta_N$ slowly converging to $0$. Note that this assumption was not needed to extend these results to more general functions. Therefore, this appendix deals only with smooth enough functions.

For a bounded Lipschitz continuous function $\varphi: \R \to \C$, let 
$$\mathcal{N}_N(\varphi):=\Tr(\varphi(X_N))=\sum_{\lambda \in \text{sp}(X_N)}\varphi(\lambda)=N\int_{\mathbb{R}}\varphi(x)\mu_N(dx).$$
In this section, we truncate and center the entries of $W_N$, in order to show that it is sufficient to study the fluctuations of $\mathcal{N}_N(\varphi)$ for matrices $W_N$ with entries bounded by $\delta_N$, where $(\delta_N)_{N\geq 1}$ is a sequence of positive real numbers such that $\delta_N \underset{N\to +\infty}{\longrightarrow} 0$ at rate less than $N^{-\eta}$ for any $\eta >0$.

Define $\hat{X}_N=\hat{W}_N+D_N$ and $\mathring{X}_N=\mathring{W}_N+D_N$ by 
$$\hat{W}_{ij}:=W_{ij}\mathbf{1}_{|W_{ij}|\leqslant \frac{\delta_N}{2}},\quad 1\leqslant i, j\leqslant N,$$
\[\mathring{W}_{ij}:=\hat{W}_{ij}-\esp[\hat{W}_{ij}],\quad 1\leqslant i \neq j\leqslant N,\]
and accordingly 
$$\mathring{\mathcal{N}}_N(\varphi):=\Tr(\varphi(\mathring{X}_N)).$$
Note that the entries of $\mathring{W}$ are centred and bounded by $\delta_N$. Furthermore, the off-diagonal entries are independent and identically distributed, as well as entries on the diagonal.
Observe that, for $i\neq j$, 
$$|\mathbb{E}[\hat{W}_{ij}]|=|\mathbb{E}[W_{ij}\mathbf{1}_{|W_{ij}|\leq \frac{\delta_N}{2}}]|=|\mathbb{E}[W_{ij}\mathbf{1}_{|W_{ij}|>\frac{\delta_N}{2}}]|=O(\delta_N^{-3-4\varepsilon}N^{-2(1+\varepsilon)}).$$
Furthermore, $\Var \mathring{W}_{ij}=\sigma_N^2-\esp[|W_{ij}|^2\mathbbm{1}_{|W_{ij}|>\frac{\delta_N}{2}}]-|\esp[\hat{W}_{ij}]|^2$, and $\esp[|W_{ij}|^2\mathbbm{1}_{|W_{ij}|>\frac{\delta_N}{2}}]=O(N^{-2(1+\varepsilon)}\delta_N^{-2-4\varepsilon})$. Therefore, $\Var \mathring{W}_{ij}=\mathring{\sigma}_N^2$ with $\mathring{\sigma}^2_N=\sigma_N^2+O(\delta_N^{-3-4\varepsilon}N^{-2(1+\varepsilon)})$. Note that $N\mathring{\sigma}_N^2 \underset{N\to +\infty}{\longrightarrow} \sigma^2$.
Similar bounds may be proved for $\esp[|\mathring{W}_{ij}|^4]$, $\esp[|\mathring{W}_{ij}|^{4(1+\varepsilon)}]$, $\esp[\mathring{W}_{ii}^2]$ and $\esp[|\mathring{W}_{ii}|^{2(1+\varepsilon)}]$ from which it may be shown that the entries of $\mathring{W}$ satisfy the same properties as the ones of $W_N$. In particular, one has $N\esp[\mathring{W}_{ii}^2]=N\mathring{s}^2_N \underset{N\to +\infty}{\longrightarrow} s^2$, and for $i \neq j$ $N\esp[\mathring{W}_{ij}^2]=N\mathring{\tau}_N \underset{N\to +\infty}{\longrightarrow} \tau$, $N^2(\esp[|\mathring{W}_{ij}|^4]-2\mathring{\sigma}_N^4-\mathring{\tau}_N^2)=N^2\mathring{\kappa}_N\underset{N\to +\infty}{\longrightarrow}\kappa $.

By Lemma \ref{lipschitz}, 
 \begin{align*}
\esp\big[|\mathring{\mathcal{N}}_N(\varphi)-\mathcal{N}_N(\varphi)|\big]&\leqslant \esp\big[|\Tr(\varphi(\mathring{X}_N))-\Tr(\varphi(X_N))|\big]\\
 & \leqslant \|\varphi\|_{\text{Lip}}\sum_{i,j=1}^N \esp[|W_{ij}-\mathring{W}_{ij}|]\\
  &\leqslant 2\|\varphi\|_{\text{Lip}}\sum_{i,j=1}^N \esp[|W_{ij}|\mathbf{1}_{|W_{ij}|\geqslant \frac{\delta_N}{2}}]\\
 &\leqslant \|\varphi\|_{\text{Lip}}\big(\delta_N^{-3-4\varepsilon}\sum_{i\neq j} \esp[|W_{ij}|^{4(1+\varepsilon)}] +\delta_N^{-1-2\varepsilon}\sum_{i=1}^N \esp[|W_{ii}|^{2(1+\varepsilon)}] \big)\\
 & \leqslant \|\varphi\|_{\text{Lip}}\big(O(\delta_N^{-3-4\varepsilon}N^{-2\varepsilon})+O(\delta_N^{-1-2\varepsilon}N^{-2\varepsilon})\big)=o(1).
\end{align*}

Therefore, assuming that $\mathring{\mathcal{N}}_N(\varphi)-\esp[\mathring{\mathcal{N}}_N(\varphi)]$ converges to a Gaussian variable yields that $\mathcal{N}_N(\varphi)-\esp[\mathcal{N}_N(\varphi)]$ converges to the same Gaussian variable. Furthermore, if $\esp[\mathring{\mathcal{N}}_N(\varphi)]-N\int_{\R}\varphi d\rho_N$ converges to some limit $b(\varphi)$, the same will hold for $\esp[\mathcal{N}_N(\varphi)]-N\int_{\R}\varphi d\rho_N$.

Therefore, for our purposes, we may suppose that the entries of $W_N$ are bounded almost surely by $\delta_N$.
 
\section{Asymptotic infinitesimal freeness of GUE and deterministic matrices}

Observe that the bias $b(\varphi)$ in Theorem \ref{main} vanishes whenever $s^2=\sigma^2, \tau=0$ and $\kappa=0$ (this is for instance the case for the deformed GUE). In that case, the mean empirical spectral measure $\mathbb{E}[\mu_N]$ of $W_N+D_N$ is particularly well approximated by the free additive convolution $\rho_N=\mu_{N\sigma_N^2}\boxplus \nu_N$ of a semicircular distribution and $\nu_N$. It is then natural to ask whether a multivariate generalization of this result holds: in the noncommutative probability space $(\mathcal{M}_N(\mathbb{C})\otimes L^{\infty-},\mathbb{E}N^{-1}\Tr )$, let $(W_1, \ldots , W_k)$ be a $k$-tuple of independent copies of $W_N$ with entries (having finite moments of any order) satisfying \ref{hyp:indep}, \ref{hyp:offdiagonal} and \ref{hyp:diagonal} with $s^2=\sigma^2, \tau=0$ and $\kappa=0$, and $(A_1, \ldots , A_l)$ be a $l$-tuple of bounded deterministic matrices with $*$-distribution $\nu_N$; is the $*$-distribution $\xi_N$ of the $(k+l)$-tuple $(W_1, \ldots , W_k, A_1, \ldots , A_l)$ well approximated by the free product $\mu_{N\sigma_N^2}^{\star k}\star \nu_N$ of a semicircular family and $\nu_N$? This was proved for independent standard GUE matrices in the absence of deterministic matrices in \cite{Thorbjornsen00}, and, formulated as an asymptotic infinitesimal freeness result, with finite rank deterministic matrices in \cite{Shlyakhtenko18}. This holds in general; for simplicity, we restrict ourselves to GUE matrices. In other words, we prove asymptotic infinitesimal freeness for independent GUE matrices and a tuple of bounded deterministic matrices converging in $*$-distribution.

\begin{thm} \label{asympinffree}
Let $(W_1, \ldots , W_k)$ be a $k$-tuple of independent GUE$(N,\sigma_N^2)$ matrices (with the assumption $\sigma_N^2=O(N^{-1})$) and $(A_1, \ldots , A_l)$ be a $l$-tuple of deterministic matrices with $*$-distribution $\nu_N$ in the noncommutative probability space $(\mathcal{M}_N(\mathbb{C})\otimes L^{\infty-},\mathbb{E}N^{-1}\Tr )$. Assume that $\sup_{N\geq 1}\|A_i\|<+\infty,\, 1\leq i\leq l$. Then the $*$-distribution $\xi_N$ of the $(k+l)$-tuple $(W_1, \ldots , W_k, A_1, \ldots , A_l)$ satisfies: for $P\in \mathbb{C}\langle X_1,\ldots ,X_{k+l}\rangle$,
$$\xi_N(P)=(\mu_{N\sigma_N^2}^{\star k}\star \nu_N)(P)+O(N^{-2}).$$
\end{thm}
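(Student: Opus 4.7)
By multilinearity, it suffices to bound
$\xi_N(P)-(\mu_{N\sigma_N^2}^{\star k}\star \nu_N)(P)$
for a single monomial
$P = M_0 X_{i_1} M_1 X_{i_2}\cdots X_{i_n}M_n$,
where $i_1,\ldots,i_n\in\{1,\ldots,k\}$ index the GUE factors and each $M_j$ is a (possibly empty) word in $X_{k+1},X_{k+1}^*,\ldots,X_{k+l},X_{k+l}^*$. Upon evaluation, each $M_j$ becomes a deterministic matrix whose operator norm is bounded uniformly in $N$.

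The plan is to apply Wick's formula to $\esp[N^{-1}\Tr(P)]$, using the GUE covariance $\esp[(W_i)_{ab}(W_j)_{cd}]=\sigma_N^2\,\delta_{ij}\delta_{ad}\delta_{bc}$, which yields
\[\xi_N(P)=\sigma_N^{2m}\sum_{\pi\in \mathcal{P}_2^{\mathbf{i}}(n)}N^{-1}\prod_{C\in\,\text{Cyc}(\pi\gamma)}\Tr(M_C),\]
where $n=2m$ (odd $n$ gives zero), $\gamma=(1\,2\,\cdots\,n)$ is the full cycle, $\mathcal{P}_2^{\mathbf{i}}(n)$ is the set of pair partitions of $\{1,\ldots,n\}$ compatible with the index pattern $\mathbf{i}=(i_1,\ldots,i_n)$ (pairing only positions with equal index), and $M_C$ is the alternating product of the $M_j$ read along each cycle $C$ of the permutation $\pi\gamma$. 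The Euler formula for pair partitions on the oriented disk reads $\#\text{Cyc}(\pi\gamma)=m+1-2g(\pi)$, with $g(\pi)\in\mathbb{N}$ vanishing if and only if $\pi$ is non-crossing. Together with $(N\sigma_N^2)^m=O(1)$ and the uniform norm bound on the $M_j$'s, each $\pi$ contributes a term of size
\[\sigma_N^{2m}\,N^{\#\text{Cyc}(\pi\gamma)-1}\prod_C\Tr(M_C)=(N\sigma_N^2)^m\,N^{-2g(\pi)}\prod_C\bigl(N^{-1}\Tr(M_C)\bigr)=O\bigl(N^{-2g(\pi)}\bigr).\]
So non-crossing $\pi$ contribute $O(1)$ terms to $\xi_N(P)$, while every crossing $\pi$ contributes $O(N^{-2})$; as $n$ is fixed the number of pair partitions is finite, so the total crossing contribution is $O(N^{-2})$.

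It remains to identify the non-crossing part of the sum with $(\mu_{N\sigma_N^2}^{\star k}\star \nu_N)(P)$. A free semicircular family $(s_1,\ldots,s_k)$ of covariance $(N\sigma_N^2)I_k$, free from the $*$-algebra generated by $(A_1,\ldots,A_l)$, has mixed $*$-moments given, by the Nica--Speicher moment--cumulant formula (see \cite{MinSpebook}), by exactly the same sum over non-crossing index-compatible pair partitions of the $W$-positions, weighted by the product of $N^{-1}\Tr(M_C)$ over cycles and the factor $(N\sigma_N^2)^m$ coming from the pair cumulants of the semicirculars. This is precisely the non-crossing part of the Wick expansion above, proving the theorem.

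The only potentially delicate point is the genus expansion, which is classical once the Wick sum is written out, and the identification of the leading term with the free-product moment, which is a standard free probability computation. The decay $O(N^{-2})$ (rather than $O(N^{-1})$) is a direct consequence of the fact that the genus defect $m+1-\#\text{Cyc}(\pi\gamma)=2g(\pi)$ is always a nonnegative even integer for pair partitions on an oriented surface, and this even-power decay is precisely what underlies the \emph{infinitesimal} freeness interpretation of the result.
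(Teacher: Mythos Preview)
Your proposal is correct and follows essentially the same route as the paper: both expand $\xi_N(P)$ via Wick's formula into a sum over index-compatible pair partitions, invoke the genus/Euler identity $\#\mathrm{Cyc}(\pi\gamma)=m+1-2g(\pi)$ to see that crossing pairings contribute $O(N^{-2})$, and then identify the non-crossing part with the free-product moment using the Nica--Speicher moment--cumulant formula (the paper writes this via the Kreweras complement $K(\pi)=\pi\gamma$ for non-crossing $\pi$). The only cosmetic difference is that the paper normalizes its monomials as $x^1a^1\cdots x^na^n$ (using cyclicity of the trace to absorb your leading $M_0$), and writes the genus defect directly as the even integer $n/2+1-|\pi\gamma|$ rather than introducing $g(\pi)$.
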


\begin{proof}
By linearity, it is sufficient to prove the statement for monomials $P$ and for Hermitian $A_1, \ldots , A_l$. Our proof relies on the combinatorics of free probability, as exposed in Lecture 22 of \cite{NicSpebook}; we follow their notations. 
In particular, $NC(n)$ denotes the lattice of non-crossing partitions, $(\kappa_n)_{n\geq 1}$ the sequence of free cumulant functionals, $K$ the Kreweras complementation map, $\gamma$ the cyclic permutation $(1, \ldots ,n)$. 
Let $(\mathcal{A},\varphi)$ be a noncommutative probability space, $w_1, \ldots , w_k\in \mathcal{A}$ free semicircular elements with variance $N\sigma_N^2$ and $(a_1, \ldots , a_l)\in \mathcal{A}^l$ a $l$-tuple of selfadjoint noncommutative random variables with noncommutative distribution $\nu_N$, free from $\{w_1, \ldots , w_k\}$. It is sufficient to consider non-constant monomial $P$ in $w_1, \ldots , w_k, a_1, \ldots , a_l$ (resp. $W_1, \ldots , W_k, A_1, \ldots , A_l$) of the form $x^1a^1\cdots x^na^n$ (resp. $X^1A^1\cdots X^nA^n$) with $n\geq1$, $x^1=w_{j_1}, \ldots , x^n=w_{j_n}$ (resp. $X^1=W_{j_1}, \ldots , X^n=W_{j_n}$) and $a^1,\ldots ,a^n$ in the multiplicative semigroup generated by $\{a_1, \ldots , a_l\}$ (resp. $A^1,\ldots ,A^n$ in the multiplicative semigroup generated by $\{A_1, \ldots , A_l\}$). For such a monomial $P$, on the one hand, 
\begin{eqnarray*}
(\mu_{N\sigma_N^2}^{\star k}\star \nu_N)(P)&=&\varphi(x^1a^1\cdots x^na^n)\\
					   &=&\sum_{\pi \in NC(n)}\kappa_{\pi}(x^1, \ldots ,x^n)\varphi_{K(\pi)}(a^1, \ldots , a^n)\\
					   &=&\sum_{\pi \in NC_2^{(j)}(n)}(N\sigma_N^2)^{n/2}\varphi_{K(\pi)}(a^1, \ldots , a^n),
\end{eqnarray*}
where $NC_2^{(j)}(n)$ is defined page 376 of \cite{NicSpebook}.
On the other hand, still with the notations of Lecture 22 of \cite{NicSpebook},
\begin{eqnarray*}
\xi_N(P)&=&\mathbb{E}[N^{-1}\Tr (X^1A^1\cdots X^nA^n)]\\
	  &=&N^{-1}\sum_{i_1,\ldots ,i_{2n}=1}^N\mathbb{E}[X_{i_1i_2}^1\cdots X_{i_{2n-1}i_{2n}}^n]A_{i_2i_3}^1\cdots A_{i_{2n}i_1}^n\\
	  &=&N^{-1}\sum_{i_1,\ldots ,i_{2n}=1}^N\sum_{\pi \in \mathcal{P}_2(n)}\mathbb{E}_{\pi}[X_{i_1i_2}^1,\ldots , X_{i_{2n-1}i_{2n}}^n]A_{i_2i_3}^1\cdots A_{i_{2n}i_1}^n,
\end{eqnarray*}
using Wick formula for the centred complex Gaussian process $(X_{i_1i_2}^1,\ldots ,X_{i_{2n-1}i_{2n}}^n)$. Denote $e_t:=(i_{2t-1},i_{2t}), e_t^*:=(i_{2t},i_{2t-1})$ and observe that 
$$\mathbb{E}_{\pi}[X_{i_1i_2}^1,\ldots , X_{i_{2n-1}i_{2n}}^n]=\left\{
	\begin{array}{ll}
	\sigma_N^n &\text{ if } n \text{ is even, } \pi \in \mathcal{P}_2^{(j)}(n) \text{ and } e_s=e_t^*,\quad \forall \{s,t\}\in \pi,\\
	0 &  \textrm{ otherwise. }
	\end{array}\right.$$
Hence, exchanging sums,
\begin{eqnarray*}
\xi_N(P)&=&N^{-1}\sigma_N^n\sum_{\pi \in \mathcal{P}_2^{(j)}(n)}\Tr_{\pi \gamma}(A^1, \ldots , A^n)\\
	  &=&\sum_{\pi \in \mathcal{P}_2^{(j)}(n)}N^{-n/2-1+|\pi \gamma|}(N\sigma_N^2)^{n/2}(N^{-1}\Tr)_{\pi \gamma}(A^1, \ldots , A^n)\\
	  &=&\sum_{\pi \in NC_2^{(j)}(n)}(N\sigma_N^2)^{n/2}(N^{-1}\Tr)_{K(\pi)}(A^1, \ldots , A^n)+O(N^{-2})\\
	  &=&(\mu_{N\sigma_N^2}^{\star k}\star \nu_N)(P)+O(N^{-2}),
\end{eqnarray*}
where we have used the fact that $n/2+1-|\pi \gamma|$ is an even nonnegative integer vanishing if and only if the pairing $\pi $ is non-crossing (see \cite{Thorbjornsen00}).
\end{proof}

\subsection*{Acknowledgments}
We are glad to thank the GDR MEGA for partial support.

\def\cprime{$'$}

\end{document}